\documentclass[12pt]{article}
\usepackage{amsmath, amssymb, amsthm, enumerate, booktabs, accents, framed, graphicx, array, color, multirow, mathrsfs, amsrefs, mathrsfs, cases, euscript, bm, multicol, subcaption}

\usepackage[utf8]{inputenc}
\usepackage[colorlinks,citecolor=blue]{hyperref}
\usepackage[usenames,dvipsnames,svgnames,table]{xcolor}
\usepackage{a4wide}
\usepackage{microtype}
\allowdisplaybreaks[4]

\numberwithin{equation}{section}

\theoremstyle{plain}
\newtheorem{theorem}{Theorem}
\newtheorem{lemma}{Lemma}
\newtheorem{corollary}{Corollary}
\newtheorem{proposition}{Proposition}[section]

\newtheorem{remark}{Remark}
\newtheorem*{theorem*}{Theorem}

\def\bE{\mathbb E}

\def\bN{\mathbb N}

\def\bC{\mathbb C}

\def\cP{\mathcal P}

\def\id{\mathrm {Id}}
\def\Tr{\mathrm {Tr}}
\def\tr{\mathrm {tr}}

\def\Wg{\mathrm {Wg}}

\title{Operator Norm Bounds for Multi-leg Matrix Tensors and applications to Random Matrix Theory}
\author{
Beno\^\i{}t Collins\thanks{Department of Mathematics, Kyoto University, Japan. Email: collins@math.kyoto-u.ac.jp}
\, and
Wangjun Yuan\thanks{Department of Mathematics, Southern University of Science and Technology, Shenzhen, China. Email: ywangjun@connect.hku.hk}
}
\date{\today}

\begin{document}

\maketitle

\begin{abstract}
    We investigate the extremal values of partial traces of matrix tensors under operator norm constraints. To evaluate these multi-linear quantities, we develop a comprehensive graphical formalism that encodes multi-leg partial traces, partial permutations, and their moments using colored directed graphs.  With this graphical framework, we establish optimal, sharp bounds for the partial trace $(\Tr_{\sigma_1} \otimes \ldots \otimes \Tr_{\sigma_k})(A_1, \ldots, A_m)$ over matrices bounded by $\|A_i\| \le 1$. Specifically, we prove that this maximum evaluates exactly to $N^{M(\sigma_1,\ldots,\sigma_k)}$, where $N$ is the dimension and $M$ represents the maximal number of directed cycles in the associated graph across all possible internal vertex pairings. We further derive explicit operator norm estimates for matrices generated by partial traces of partial permutations. Finally, we apply these combinatorial bounds to multi-matrix random matrix theory. By examining models involving Ginibre ensembles, we extend concepts of asymptotic freeness to matrix coefficient algebras, establishing operator norm estimates that rigorously separate the asymptotic behavior of non-crossing and crossing pairings.
\end{abstract}

\noindent{\bf Mathematics Subject Classification 2020 (MSC2020):} Primary 15A60; Secondary 15B52, 46M05, 46L54, 47A30, 47C15, 47N30, 60B20.

\medskip 

\noindent{\bf Keywords and phrases:}
Tensor monomials: Partial trace; Operator norm for tensors.

\section{Introduction} \label{sec:introduction}

The integration of random unitary matrices with respect to the Haar measure plays a fundamental role in random matrix theory, operator algebras, and free probability. A central algebraic tool for evaluating these integrals is the Weingarten calculus \cites{MR4415894,MR4680355}. For two $m$-tuples of matrices $A_1, \ldots, A_m$ and $B_1, \ldots, B_m$ in $M_N(\bC)$, the expectation of their alternating product intertwined with a Haar-distributed unitary matrix $U$ is given by a sum over permutations:
$$ \bE \left[ \Tr \left( A_1 U B_1 U^* \cdots A_m U B_m^* \right) \right] = \sum_{\substack{\sigma, \tau, \rho \in \cP([m]) \\ \sigma\tau\rho = (1, \ldots, m)}} \Tr_\sigma(A_1, \ldots, A_m) \Tr_\tau(B_1, \ldots, B_m) \Wg(\rho, N), $$
where $\cP([m])$ is the set of all permutations on $[m] = \{1,2,\ldots,m\}$, and $\Tr_\sigma(A_1, \ldots, A_m)$ is defined as the product over the cycles of $\sigma$ of the traces of the cyclically ordered products of the matrices. This precise algebraic framework has played a key role in establishing profound analytical results, including sharp operator norm bounds for non-commutative polynomials \cite{MR4445344} and strong asymptotic freeness \cite{MR4756991}.

More recently, there has been a renewed interest in extending these models to the tensor setting, where the integral is taken over $U = U_1 \otimes \dots \otimes U_k$, with each $U_i$ being an independent Haar-distributed random matrix in $M_N(\bC)$ \cite{MR4735823}. Such multi-fold tensor models arise naturally in various contexts, including the spectral distribution of large dimensional tensor products \cite{MR4460273}. In this setting, the integrations yield combinations of fundamental multi-linear tensor invariants of the form:
\begin{align} \label{eq:tensor_invariant}
    (\Tr_{\sigma_1} \otimes \ldots \otimes \Tr_{\sigma_k} ) ( A_1, \ldots, A_m ),
\end{align}
where $A_1, \ldots, A_m \in M_N(\bC)^{\otimes k}$. 

In this paper, we turn our attention to controlling the extremal values of these tensor invariants \eqref{eq:tensor_invariant} under the constraint that the operator norms of the coefficient matrices satisfy $\|A_i\| \le 1$. If $k=1$, the solution to this problem is trivial: the maximum is achieved when $A_i = I_N$ for all $i$, yielding exactly $N^{\# \text{cycles}(\sigma_1)}$. However, evaluating these quantities becomes deeply combinatorial and highly non-trivial as soon as the number of legs $k \ge 2$. There are at least three primary motivations to systematically delve into this operator norm analysis:

\begin{enumerate}
    \item \textbf{Operator Algebras and Free Probability:} In the two-leg case ($k=2$), this precise maximization problem emerged in the study of Hayes' random matrix approach to the Peterson-Thom conjecture \cite{MR4448584}. While estimates leveraging the complete positivity of partial traces allow one to conclude that the maximum behaves nicely if $\sigma_1$ is non-crossing and $\sigma_2$ is the full cycle, extending this to general permutations $\sigma_1, \sigma_2$ requires a fundamentally new understanding of matrix tensors.
    \item \textbf{Tensor Models and Invariant Theory:} The theoretical study of matrix tensors has been significantly developed in recent years \cites{MR4735823,MR4604905}, but not yet thoroughly investigated within an operator algebraic framework. Because quantities of the form \eqref{eq:tensor_invariant} are the fundamental observables of matrix tensors in this theory, establishing sharp evaluations of these multi-leg traces is a vital preliminary step for future asymptotic analysis.
    \item \textbf{Quantum Information Theory:} Bounds on these tensor invariants directly capture the presence and limitations of entanglement across multiple tensor subsystems \cites{CN2010,MR4604905}. Our results demonstrate that while entanglement effects actively manifest in these maximization problems, they remain quite tame and structurally bounded, guided by underlying symmetries and Schur-Weyl duality.
\end{enumerate}

To tackle this, we develop a comprehensive graphical calculus  that translates the multilinear tensor structure into the analysis of colored directed graphs. Throughout this paper, for the sake of clarity, we carefully detail the proofs for the two-leg case ($k=2$) before generalizing to an arbitrary number of legs. We also extend our framework beyond full permutations to arbitrary partial permutations, leading to explicit operator norm estimates for the resulting multi-leg matrices.

Let us summarize the main results of this paper. Our analysis relies on encoding the multi-linear tensor structure into a colored directed graph $G_{\sigma_1, \ldots, \sigma_k}$. In this graph, each matrix $A_i$ is represented as a ``rectangle'' with $k$ in-vertices and $k$ out-vertices, and the permutations $\sigma_j$ define the external directed edges between different rectangles. By introducing auxiliary internal edges (referred to as ``blue edges'') that pair the in-vertices and out-vertices within each rectangle, we define our fundamental combinatorial invariant: let $M(\sigma_1, \ldots, \sigma_k)$ denote the maximal number of directed cycles in $G_{\sigma_1, \ldots, \sigma_k}$ over all possible valid connections of these internal blue edges. 

Using this graphical formalism, our first main result exactly evaluates the maximization problem for the scalar partial trace over the unit ball of the operator norm:

\begin{theorem*}[cf. Theorem \ref{Thm-main-multi}]
    For any integer $k \ge 1$ and permutations $\sigma_1, \ldots, \sigma_k \in \cP([m])$, the maximum of the multi-leg partial trace over matrices bounded in operator norm is governed exactly by the cycle structure of the graph:
    \begin{align*}
        \max_{\|A_1\|,\ldots,\|A_m\| \le 1} \left| \left( \Tr_{\sigma_1} \otimes \ldots \otimes \Tr_{\sigma_k} \right) \left( A_1, \ldots, A_m \right) \right| = N^{M(\sigma_1,\ldots,\sigma_k)}.
    \end{align*}
\end{theorem*}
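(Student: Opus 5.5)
The proof has two halves: a construction showing the maximum is at least $N^{M}$, and an estimate showing it is at most $N^{M}$.

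\emph{Lower bound.} Fix a blue-edge configuration attaining $M=M(\sigma_1,\ldots,\sigma_k)$. In each rectangle, the blue edges match the $k$ in-vertices (legs $1,\ldots,k$) bijectively to the $k$ out-vertices. Such a bijection $\pi_i\in\cP([k])$ defines a permutation operator $P_{\pi_i}$ on $(\bC^N)^{\otimes k}$ that sends leg $j$ to leg $\pi_i(j)$. I will take $A_i=P_{\pi_i}$. This is unitary, so $\|A_i\|=1$, and its matrix entries are products of Kronecker deltas realizing exactly the blue edges. Expanding $(\Tr_{\sigma_1}\otimes\cdots\otimes\Tr_{\sigma_k})(A_1,\ldots,A_m)$ in coordinates turns it into a sum over index assignments to the vertices of $G_{\sigma_1,\ldots,\sigma_k}$. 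Both the external edges (from the traces) and the blue edges (from the deltas) force equal indices along each directed cycle. The sum therefore equals $N^{\#\text{cycles}}=N^{M}$.

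\emph{Upper bound.} For general contractions I would first reduce to unitaries. The quantity is multilinear in $(A_1,\ldots,A_m)$, and the unit ball of $M_{N^k}(\bC)$ in operator norm is the convex hull of the unitary group (Russo--Dye). Hence the maximum of the modulus is attained at unitaries $A_i=U_i$. Next I would write each $U_i$ via its singular value decomposition, or alternatively use the bound $|\Tr(XY)|\le\|X\|\,\|Y\|_1$ repeatedly. The key step is then to contract the network one rectangle at a time, using an inductive Cauchy--Schwarz / Hölder argument on the tensor network. After splitting the graph along a cut, the trace becomes an inner product $\langle \xi,(U_i\otimes I)\eta\rangle$, where $\xi,\eta$ are vectors built from the remaining network. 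Bounding this by $\|\xi\|\,\|\eta\|$ and noting that $\|\xi\|^2,\|\eta\|^2$ are themselves partial traces of a doubled network of the same type gives a recursion, with $U_iU_i^*=I$ simplifying the doubled network. The recursion should bound the quantity by $N^{(c_1+c_2)/2}$, where $c_1,c_2$ are the cycle counts of two graphs obtained by doubling. If each doubled graph has at most $2M$ cycles after the unitaries cancel, the bound $N^{M}$ follows.

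I expect the main obstacle to be this last combinatorial step. One must show that every doubled or reduced graph arising in the recursion has cycle count at most the corresponding multiple of $M$; in other words, that replacing a unitary by the identity (a particular blue pairing, $\pi=\mathrm{id}$) never does better than the optimal blue pairing. If the Cauchy--Schwarz recursion proves unwieldy, my fallback is different. I would expand each $U_i$ in the Schur--Weyl / matrix-unit basis and argue by induction on $m$: remove one rectangle by optimizing over its contraction, which yields an operator on the remaining legs of norm at most $N^{\text{(number of closed cycles)}}$. Carrying this out for $k=2$ first, where each blue pairing is either the identity or the swap, would make the cycle counting explicit before the general $k$ case.
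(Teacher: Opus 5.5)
Your lower bound is the paper's: realize an optimal blue pairing by leg-permutation unitaries and count free indices to get $N^{M}$. Your reduction to unitaries by convexity is also the paper's. The upper bound, however, is not yet an argument. You say the recursion ``should'' give $N^{(c_1+c_2)/2}$ and that one ``must show'' the doubled graphs have at most $2M$ cycles, but nothing supplies this. The Schur--Weyl fallback does not get started either, since a general unitary on $(\bC^N)^{\otimes k}$ is not a combination of leg permutations. The paper handles this step with a single Cauchy--Schwarz cut. Delete a set $F$ of external edges so that the digraph $D$ on $[m]$ with arcs $i\to\sigma_j(i)$ becomes acyclic. Peeling source rectangles using unitarity gives $\|G'\|_2^2=N^{|F|}$, the bare wires give $\|G''\|_2^2=N^{|F|}$, and so $|(\Tr_{\sigma_1}\otimes\cdots\otimes\Tr_{\sigma_k})(A_1,\ldots,A_m)|\le N^{|F|}$ for every feedback arc set $F$. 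To conclude, the paper needs Proposition \ref{Prop-multi-upper-lower}, i.e.\ that some such $F$ has $|F|=M$. This is exactly the inequality your sketch leaves open.

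That inequality is false in general, and so is the statement. $M$ is the largest number of closed trails into which the arcs of $D$ can be partitioned, an integral packing number, whereas splitting legs realizes fractional packings. Take $m=11$, $k=5$, and $\sigma_j(i)=i+q_j \pmod{11}$ with $\{q_1,\ldots,q_5\}=\{1,3,4,5,9\}$, the nonzero squares. Then $D$ is the Paley tournament: $55$ arcs, no loops, no $2$-cycles (since $-1$ is not a square mod $11$). So every cycle of $G_{\sigma_1,\ldots,\sigma_5}$ uses at least three external edges, and $M\le 18$. Every arc lies in exactly three directed triangles: for $0\to1$ they pass through $2$, $6$, $10$, and the maps $x\mapsto ax+b$ with $a$ a square are arc-transitive. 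Hence there are $55$ triangles.

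Now let $N=n^3$ with $n\ge2$, and view each $A_i$ as acting on $15$ legs of dimension $n$, so that each arc of $D$ becomes three parallel copies. Assign the three copies of each arc to its three triangles. In each rectangle, pair the incoming and outgoing copies that belong to the same triangle. This is a legitimate blue pairing with $55$ cycles. Your own permutation construction in dimension $n$ then gives the value $n^{55}=N^{55/3}>N^{18}\ge N^{M}$. Consistently, any feedback arc set must meet all $55$ triangles, so $3|F|\ge55$ and $|F|\ge19>M$.

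The paper's interpolation argument fails because re-pairing one rectangle splices two paths, which can put two blue edges of another rectangle on a single path. This already happens for $\sigma_1=\sigma_2=(12)$ if one deletes the green edge $2\to1$ and the red edge $1\to2$. So no completion of your upper-bound step can reach $N^{M}$. The cut argument honestly yields only $N^{\min|F|}$, and the exponent in the statement is not $M(\sigma_1,\ldots,\sigma_k)$ in general.
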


We then extend this topological counting to the case where $\sigma_1, \ldots, \sigma_k$ are \emph{partial permutations}. We denote by $\cP'([m])$ the set of partial permutations on $[m]$. In this regime, the partial trace does not contract all indices and consequently outputs a multi-leg matrix $Y$ rather than a scalar. By lifting the graph structure to encode the moments $\Tr((YY^*)^p)$, we obtain optimal, sharp estimates on the operator norm:

\begin{theorem*}[cf. Theorem \ref{Thm-main-matrix} and Corollary \ref{Coro-operator norm}]
    For any partial permutations $\sigma_1, \ldots, \sigma_k \in \cP'([m])$, let $Y$ be the matrix resulting from the partial trace evaluation. The operator norm of $Y$ satisfies the sharp bound:
    \begin{align*}
        \max_{\|A_1\|,\ldots,\|A_m\| \le 1} \|Y\| = N^{M(\sigma_1,\ldots,\sigma_k)}.
    \end{align*}
\end{theorem*}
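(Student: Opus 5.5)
The statement has two parts: a lower bound, given by an explicit construction, and an upper bound, obtained by reducing to the scalar Theorem~\ref{Thm-main-multi} through moments. We use the graph for $Y$: each free in-index or out-index of the partial permutations becomes an open leg. We take as given that $M(\sigma_1,\ldots,\sigma_k)$ in the partial setting counts the maximal number of directed cycles plus open directed paths over all blue pairings, since each open path is an uncontracted index of dimension $N$.

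\textbf{Upper bound.} We write $\|Y\| = \lim_{p\to\infty} \Tr((YY^*)^p)^{1/(2p)}$. The quantity $\Tr((YY^*)^p)$ is itself a full tensor trace $(\Tr_{\tau_1}\otimes\cdots\otimes\Tr_{\tau_k})$ of the $2pm$ matrices $A_i$ and $A_i^*$, each of norm at most $1$. The permutations $\tau_j$ are built by gluing $2p$ copies of $\sigma_j$, alternately original and reversed (for the adjoint), so that the open legs of copy $2r$ connect to those of copy $2r\pm1$. Theorem~\ref{Thm-main-multi} then gives
\[
\Tr((YY^*)^p)\le N^{M(\tau_1,\ldots,\tau_k)}.
\]
The key combinatorial claim is
\[
M(\tau_1,\ldots,\tau_k)\le 2p\,M(\sigma_1,\ldots,\sigma_k)+C,
\]
with $C$ independent of $p$; for instance, $C$ could be the number of open legs. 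To prove it, take an optimal blue pairing of the glued graph and look at its restriction to each copy. A cycle that stays inside one copy is a closed cycle of that copy. A cycle that crosses copies enters and leaves through open legs, so its segments inside a copy are open paths of that copy. Each copy, with its induced pairing (adjoint copies reversed), has at most $M(\sigma)$ cycles plus paths. Every crossing cycle uses at least one path segment, so the number of cycles is at most $\sum_{\text{copies}}(\#\text{cycles}+\#\text{paths})\le 2pM(\sigma)$. Hence in fact $C=0$, and taking $p$-th roots gives $\|Y\|\le N^{M}$.

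\textbf{Lower bound.} We fix a blue pairing $\pi$ attaining $M$. In each rectangle $i$, $\pi$ defines a bijection between the $k$ in-legs and the $k$ out-legs, and we let $A_i$ be the corresponding tensor permutation operator, a unitary of norm $1$. With this choice, $Y$ acts as a permutation (up to identities) of the tensor legs on the open paths, tensored with a scalar factor $N^{\#\text{closed cycles}}$. To achieve the full exponent, including the open paths, we should instead measure $\|Y\|$ through the $Y$ obtained from maximally entangled vectors, or check that $Y$ contracts index pairs along paths. The practical approach is to compute $\Tr((YY^*)^p)$ for this choice, which equals $N^{2pM+O(1)}$ because every copy realizes its optimal configuration; the $p$-th root limit then gives $\|Y\|\ge N^M$.

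\textbf{Main obstacle.} The hardest step is the cutting argument for the upper bound. We must check that the induced pairing on each adjoint copy is a valid pairing for the reversed graph. We must also confirm that the definition of $M$ for partial permutations counts open paths in exactly the way the gluing requires. If paths need to be weighted differently, we would fall back to establishing $M(\tau)\le 2pM+O(1)$, which still suffices after taking $p\to\infty$.
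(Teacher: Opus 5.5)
Your proposal has a genuine gap, and it sits exactly at the ``main obstacle'' you flagged. First, the definition of $M$. In the paper, $M(\sigma_1,\ldots,\sigma_k)$ for partial permutations is the maximal number of directed \emph{cycles} only. Fix any blue pairing. Every vertex then has in- and out-degree at most one, so the components are cycles together with exactly $S=\sum_{j=1}^k (m-D(\sigma_j))$ paths, one starting at each open in-vertex. Hence ``cycles plus open paths'' always equals $M+S$. With your reading the statement is false. Take $k=m=1$ and $\sigma_1=\emptyset$: then $Y=A_1$, so $\|Y\|\le 1$, but your count gives $1$. Open paths account for the size of $Y$, which is the factor $N^{S}$ in $\Tr((YY^*)^p)$, not for its norm. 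Your own remark in the lower bound already settles this. For $A_i=U_{\pi_i}$ taken from an optimal pairing, $Y$ is $N^{\#\text{cycles}}$ times a permutation unitary on the open legs, so $\|Y\|=N^{M}$ directly. There is no further factor to ``achieve''.

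Second, once $M$ is read correctly, your cutting argument does not close. The cycles inside the $2p$ copies number at most $2pM$. But bounding the crossing cycles by ``each uses at least one path segment'' gives at most $2pS$ of them. That yields only $\|Y\|\le N^{M+S}$, which is not sharp. Your claim $C=0$ is also false: with $Y=A_1=I_N$ as above, $\Tr((YY^*)^p)=N$.

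The missing idea is Lemma~\ref{Lem-moment-cycle}. Yellow edges only go from ${}_rG$ to ${}_rG^*$ and from ${}_rG^*$ to ${}_{r+1}G$, with indices taken mod $p$. So a cycle that leaves a copy must go around all $2p$ copies before closing, using at least $2p$ of the $2pS$ yellow edges. Hence there are at most $S$ crossing cycles, and $M(\tau_1,\ldots,\tau_k)\le 2pM+S$. Your first guess, $C=$ number of open legs, was the right one, and this $O(1)$ error is exactly what vanishes under the $2p$-th root.

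With this lemma, plus the blue-edge correspondence between $G$ and $G^*$ (which shows each adjoint copy also carries at most $M$ cycles), your architecture coincides with the paper's proof of Theorem~\ref{Thm-main-matrix} and Corollary~\ref{Coro-operator norm}. That architecture is:
\begin{itemize}
\item apply Theorem~\ref{Thm-main-multi} to the glued graph;
\item use $\|Y\|^{2p}\le\Tr((YY^*)^p)$ and let $p\to\infty$;
\item for the lower bound, take $p$-independent permutation unitaries together with $\Tr((YY^*)^p)\le N^{S}\|Y\|^{2p}$.
\end{itemize}
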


Finally, we apply these deterministic, combinatorial operator norm bounds to multi-matrix random matrix theory. By examining matrices conjugated by Ginibre ensembles (as a combinatorially tractable proxy for Haar unitaries), we establish operator norm estimates on the coefficient algebra that rigorously isolate topological contributions.

\begin{theorem*}[cf. Theorems \ref{Thm-Ginibre-1} and \ref{Thm-Ginibre-2}]
    In the context of the matrix coefficient algebra scaled with differing leg dimensions $N^{d_1}$ and $N^{d_2}$, the partial trace limits sharply distinguish topological pairings of the Ginibre matrices. We establish explicit operator norm bounds demonstrating that crossing (non-planar) pairings are suppressed compared to non-crossing (planar) pairings by a factor of $O(N^{d_2-d_1})$ when $d_2 < d_1$.
\end{theorem*}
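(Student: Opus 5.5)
The plan is to reduce the statement to the deterministic bounds of Theorem~\ref{Thm-main-matrix} and Corollary~\ref{Coro-operator norm} through the Wick formula. Consider a word in coefficient matrices $A_1,\ldots,A_m$ acting on $\bC^{N^{d_1}}\otimes\bC^{N^{d_2}}$, interlaced with Ginibre matrices $G$ and $G^*$ of size $N^{d_1}$ that act on the first leg. We take the partial expectation over the Ginibre entries. By Wick's theorem, the expectation becomes a sum over pairings $\pi$ of the $G$'s with the $G^*$'s. Each pairing contributes a normalization factor $N^{-d_1\cdot(\#\text{pairs})}$ times a partial trace $(\Tr_{\sigma_1(\pi)}\otimes \Tr_{\sigma_2(\pi)})(A_1,\ldots,A_m)$. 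Here $\sigma_1(\pi)$ is the partial permutation on the first leg obtained by composing the word's cyclic structure with $\pi$. On the second leg, where no Ginibre matrix acts, $\sigma_2$ is the plain product structure (the full cycle or its partial version). The first step is to write this decomposition carefully, keeping track of which indices remain open, so that each term is exactly a multi-leg matrix $Y_\pi$ of the type covered by Corollary~\ref{Coro-operator norm}.

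The second step is to apply Corollary~\ref{Coro-operator norm}. Since the two legs have different dimensions $N^{d_1}$ and $N^{d_2}$, I first need the straightforward extension in which each cycle of the graph is weighted by the dimension of its colour. This gives $\|Y_\pi\|\le N^{d_1 c_1+d_2 c_2}$, where $(c_1,c_2)$ is the colour count of directed cycles in $G_{\sigma_1(\pi),\sigma_2}$ under an optimal blue-edge pairing. I would obtain this by rerunning the moment argument $\Tr((YY^*)^p)$ with two different dimensions, which changes nothing structurally.

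The third and main step is the combinatorial comparison of exponents. For a non-crossing $\pi$, the optimal blue pairing is the one that matches in-vertices to out-vertices within each leg, mimicking $A_i=I$. The first-leg cycles then number $\#(\pi\gamma)$, which is maximal (genus zero), and this exactly cancels the normalization, giving order $N^{d_2\cdot(\cdot)}$ times the planar constant. For a crossing $\pi$, every blue pairing loses cycles, and I must show that the loss is at least one $N^{d_1}$ cycle, with at most one $N^{d_2}$ cycle regained. Concretely, a blue edge that crosses legs can merge a first-colour cycle into a second-colour one, trading $N^{d_1}$ for $N^{d_2}$. This is the source of the relative factor $N^{d_2-d_1}$. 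I expect this step to be the obstacle. The strategy I would try first is an Euler-characteristic argument: one shows that $c_1+c_2$ is bounded by its value at the leg-preserving pairing, and that $c_1\le \#(\pi\gamma)\le \#(\gamma)+1-2g(\pi)-\ldots$, with equality only when $\pi$ is non-crossing. Any improvement must then come from mixed blue edges, which each convert at most one cycle. Since $d_2<d_1$, the worst case for a crossing pairing is a single conversion, which yields the bound $O(N^{d_2-d_1})$ relative to the planar terms. Summing over the finitely many pairings (their number depends only on $m$) finishes the proof.
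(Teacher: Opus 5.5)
\textbf{There is a genuine gap, located in Steps 2 and 3.}

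\emph{Step 2 uses the wrong extremal object.} The two-leg, dimension-weighted graph does not capture the maximisation. When $d_1\neq d_2$, a blue edge joining the $N^{d_1}$-leg to the $N^{d_2}$-leg does not correspond to any operator on $\bC^{N^{d_1}}\otimes\bC^{N^{d_2}}$. What one can actually do is exchange an $N$-dimensional tensor factor of the first leg with one of the second.

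The correct reduction, which is what the paper uses, is Corollary~\ref{Coro-main-multi leg} together with Corollary~\ref{Coro-operator norm}. You split into $d_1+d_2$ legs of dimension $N$, with $\sigma$ repeated $d_1$ times and $\tau$ repeated $d_2$ times. Blue edges can then route any subset of the $\sigma$-coloured strands into $\tau$-coloured ones, and the norm is $N^{M}$ for an \emph{unweighted} cycle count $M$.

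Your bound $\|Y_\pi\|\le N^{d_1c_1+d_2c_2}$ is in fact false when mixed cycles are weighted $N^{d_2}$, as your ``trading $N^{d_1}$ for $N^{d_2}$'' indicates. Take $m=3$ and the crossing pairing $\theta(1)=2$, $\theta(2)=3$, $\theta(3)=1$, so that $\sigma=(1\,5\,3)(2\,6\,4)$, and take $d_1=2$, $d_2=1$.
\begin{itemize}
\item In your two-leg graph every cycle contains at least one of the four backward edges of $\sigma$, since $\tau$ has none.
\item A pure first-colour cycle is a whole cycle of $\sigma$, and each of these contains two backward edges.
\item Hence $d_1c_1+d_2c_2\le 4$ for every blue configuration.
\end{itemize}
Yet if $A_1=A_3=A_5$ is the swap of one $N$-dimensional factor of $\bC^{N^2}$ with $\bC^{N}$ and $A_2=A_4=A_6=I$, then $\bigl(\Tr_{\sigma}^{\otimes 2}\otimes\Tr_{\tau}\bigr)(A_1,\ldots,A_6)=N^5 I_N$.

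\emph{Step 3 rests on two claims that also fail.}
\begin{itemize}
\item The total $c_1+c_2$ is not maximised by the leg-preserving pairing. In the same example with $d_1=d_2=1$, where your two-leg model is exact, the leg-preserving choice gives $\#(\sigma)=2$ cycles, while swapping the legs at $A_1,A_3,A_5$ gives $3$.
\item Nothing in your argument limits how many mixed blue edges, at how many rectangles, can help. The claim that ``the worst case is a single conversion'' does not follow from $d_2<d_1$.
\end{itemize}

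\emph{The missing idea is the paper's backward-edge count.} Every directed cycle contains a backward edge, $\tau$ contributes none, and $R(\sigma)=m+1$ for \emph{every} pairing $\theta$. So $M\le d_1(m+1)$ always. This is also what proves that the identity pairing is optimal in the non-crossing case, which you only assert.

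The crossing penalty is then local rather than a genus count. For crossing $\theta$, the paper locates a rectangle $A_{2i_2}$ whose $d_1$ $\sigma$-coloured in-edges and out-edges are all backward and not loops; it gets there by first removing adjacent pairs inside a minimal crossing. Only $d_2$ of that rectangle's out-vertices are $\tau$-coloured. So at least $d_1-d_2$ of its blue edges join two distinct backward edges inside one cycle, and each such edge costs a cycle. This gives
\[
M\le d_1(m+1)-(d_1-d_2)=d_1m+d_2,
\]
which is exactly the exponent you were aiming for.
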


\subsection*{Organization of the paper}
The paper is organized as follows.
In Section \ref{sec:graph}, we introduce the essential graphical terminology and construct the directed graphs associated with two-leg partial traces, multiple legs, and partial permutations.
Section \ref{sec:results} presents our main results, detailing the optimal operator norm bounds for partial traces across various tensor configurations.
In Section \ref{sec:upper} and Section \ref{sec:lower}, we respectively establish the upper and lower bounds for the two-leg case, which culminates in the proof of Theorem \ref{Thm-main} in Section \ref{sec:2legs}.
Section \ref{sec:multi-legs} extends these arguments to the multiple-leg setting to prove Theorem \ref{Thm-main-multi}.
In Section \ref{sec:matrix}, we establish the matrix estimates for partial permutations by proving Theorem \ref{Thm-main-matrix}, while Section \ref{sec:proof-coro} contains the proofs of the associated corollaries.
Finally, in Section \ref{sec:application}, we apply our main theorems to multi-matrix random matrix theory, exploring the asymptotic behavior of Ginibre ensembles.

\subsection*{Acknowledgements}

BC was supported by JSPS Grant-in-Aid Scientific Research (A) no. 25H00593, and Challenging Research (Exploratory) no. 23K17299.
WY was supported by Guangdong Basic and Applied Basic Research Foundation (No. 2026A1515030040), and National Natural Science Foundation of China (No. 12501183), and a Grant of the Department of Science and Technology of Guangdong Province (No. 2024QN11X161).

\section{Graphical notation} \label{sec:graph}

In order to present our main results, we need to introduce graphical terminology.
In this section, we first introduce the graph corresponding to the partial trace for the case of 2 legs in Section \ref{sec:graph-2 leg}, and then the graph for the multiple leg version in Section \ref{sec:graph-multi leg}.
Then in Section \ref{sec:graph-partial}, we extend the graphs for partial permutations, which plays a key role in our proofs.
Lastly, we construct the graph for the moments of the partial trace of partial permutations via its partial graph in Section \ref{sec:graph-moment}.

\subsection{Graph for 2 legs} \label{sec:graph-2 leg}

We start with case $k=2$ as a warm-up. In this manuscript, we take the stance that a good understanding of the case with two legs will facilitate the statement and the proof of the general case. 
Therefore, our first goal is to handle the following: for any permutations $\sigma_1, \sigma_2 \in \cP([m])$, we compute the following partial trace:
\begin{align} \label{tr-sigma-tau}
    \left( \Tr_{\sigma_1} \otimes \Tr_{\sigma_2} \right) \left( A_1, \ldots, A_m \right)
\end{align}
and find its maximum among all unitary matrices $A_1, \ldots, A_m \in M_N (\bC) \otimes M_N(\bC)$.

We use a graph with the notation $G_{\sigma_1,\sigma_2} (A_1,\ldots,A_m)$ for the representation of the partial trace \eqref{tr-sigma-tau}. We use $m$ rectangles for the matrices $A_1,\ldots,A_m$. We connect the rectangles with directed edges according to the permutations $\sigma_1, \sigma_2$.

For the permutation $\sigma_1$, for all $i$, we use a green directed edge to connect the rectangles $A_i$ and $A_{\sigma_1(i)}$ with the orientation from the rectangle $A_i$ to the rectangle $A_{\sigma_1(i)}$. The vertices of the green edge on the rectangle $A_i$ and the rectangle $A_{\sigma_1(i)}$ are called the green \textit{out-vertex} and the green \textit{in-vertex}, respectively. We also call the green edge an \textit{out-edge} of $A_i$ and an \textit{in-edge} of $A_{\sigma_1(i)}$.
We use red directed edges to connect rectangles for any permutation $\sigma_2$, following the same rule.
That is, the red directed edges are from $A_i$ to $A_{\sigma_2(i)}$.
Then, every rectangle has one green in-edge, one green out-edge, one red in-edge, and one red out-edge. Thus, there are four vertices on every rectangle: the green in-vertex, the green out-vertex, the red in-vertex, and the red out-vertex.
Moreover, if we shrink every rectangle to a point, then the connected component of the red and green directed edges correspond to the block of the permutation $\sigma_2$ and $\sigma_1$, respectively.
It is obvious that the graph defined above has $m$ red edges and $m$ green edges.
For simplicity, we always draw the out-vertices on the right of the rectangles, and the in-vertices on the left of the rectangles. Unless specified otherwise, the quantities we considered in the sequel do not depend on the position of each rectangle. Without loss of generality, we always list the rectangles $A_1,\ldots,A_m$ from left to right.
Figure \ref{figure-example'} below is an example for $m=4$, $\sigma_1=(1,2,3)(4)$, and $\sigma_2=(1,2,3,4)$.
\begin{figure}[ht]
    \centering
    \includegraphics[scale=0.5]{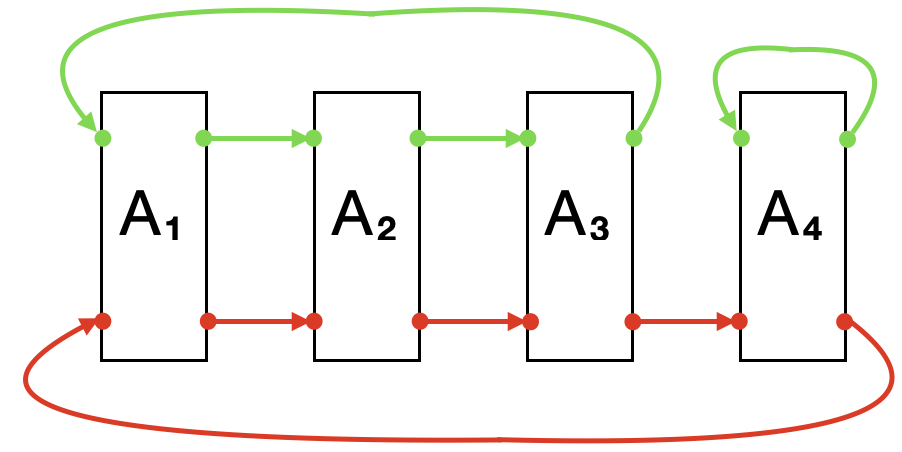}
    \caption{Graph $G_{\sigma_1,\sigma_2} (A_1,\ldots,A_m)$ with $m=4$ and $\sigma_1=(1,2,3)(4)$, $\sigma_2=(1,2,3,4)$}
    \label{figure-example'}
\end{figure}

Next, we introduce some auxiliary directed edges for the graph $G_{\sigma_1,\sigma_2} (A_1,\ldots,A_m)$. For all rectangles, we use blue directed edges to pair the in-vertices and out-vertices of the same rectangle. The blue directed edges are arbitrary, with the constraints that their orientations are from the in-vertices to the out-vertices of the same rectangles and that different blue directed edges do not share any common vertices. As every rectangle has two in-vertices and two out-vertices, it has two blue directed edges. There are two different possibilities for the pair of blue directed edges: the vertices of the blue directed edges may have the same color or different colors.
In the following, all graphs refer to the graphs without blue directed edges unless otherwise specified.
Figure \ref{figure-extended graph-example} is an example of a possibility of the blue directed edges in Figure \ref{figure-example'}.

\begin{figure}[ht]
    \centering
    \includegraphics[scale=0.5]{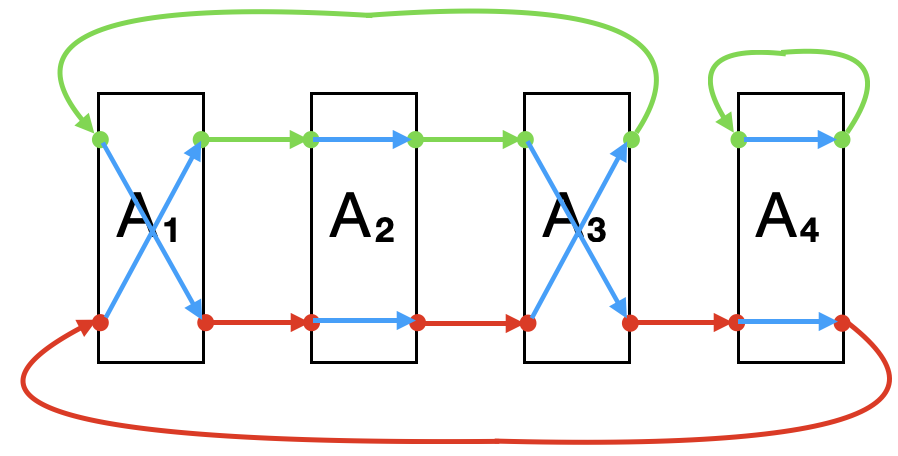}
    \caption{Blue directed edges in graph $G_{\sigma_1,\sigma_2} (A_1,\ldots,A_m)$ with $m=4$ and $\sigma_1=(1,2,3)(4)$, $\sigma_2=(1,2,3,4)$}
    \label{figure-extended graph-example}
\end{figure}

We call $G'_{\sigma_1,\sigma_2} (A_1,\ldots,A_m)$ a \emph{partial graph} if it can be obtained from $G_{\sigma_1,\sigma_2} (A_1,\ldots,A_m)$ by removing some red directed edges, green directed edges, and rectangles. The green directed edges, red directed edges, and rectangles, which are removed from $G_{\sigma_1,\sigma_2} (A_1,\ldots,A_m)$ to obtain the partial graph $G'_{\sigma_1,\sigma_2} (A_1,\ldots,A_m)$, form a partial graph $G''_{\sigma_1,\sigma_2} (A_1,\ldots,A_m)$. The partial graph $G''_{\sigma_1,\sigma_2} (A_1,\ldots,A_m)$ is called the \emph{complement} of $G'_{\sigma_1,\sigma_2} (A_1,\ldots,A_m)$.

\subsection{Graph for multiple legs} \label{sec:graph-multi leg}

Now we introduce the multiple legs version of the graph.
For permutations $\sigma_1,\ldots,\sigma_k \in \cP([m])$, we can interpret the partial trace \eqref{eq:tensor_invariant} as a graph $G_{\sigma_1,\ldots,\sigma_k} (A_1,\ldots,A_m)$ that is similar to the graph in Section \ref{sec:graph-2 leg}. We sketch the description below.

The graph $G_{\sigma_1,\ldots,\sigma_k} (A_1,\ldots,A_m)$ has $m$ rectangles, $A_1,\ldots,A_m$, which represent the corresponding matrices.
We introduce $k$ colors $col_1, \ldots, col_k$ corresponding to the $k$ permutations $\sigma_1,\ldots,\sigma_k$.
For $1 \le j \le k$, we connect the $m$ directed edges with the color $col_j$ for the permutation $\sigma_j$ in the graph $G_{\sigma_1,\ldots,\sigma_k} (A_1,\ldots,A_m)$ as follows.
For $1 \le i \le m$, we connect a directed edge of color $col_j$ between $A_i$ and $A_{\sigma_j(i)}$ with an orientation from $A_i$ to $A_{\sigma_j(i)}$. For this edge, the vertex on $A_i$ is an out-vertex, while the vertex on $A_{\sigma_j(i)}$ is an in-vertex. Both vertices are colored with color $col_j$.

Thus, each rectangle has $k$ out-vertices and $k$ in-vertices, and the $k$ out-vertices and the $k$ in-vertices are of color $col_1,\ldots,col_k$, respectively. We provide an example in Figure \ref{figure-example-multi-1} for the graph $G_{\sigma_1,\ldots,\sigma_k} (A_1,\ldots,A_m)$.
For practical reasons, we do not represent the colors in the picture.
In the graph $G_{\sigma_1,\ldots,\sigma_k} (A_1,\ldots,A_m)$, we connect the in-vertices and the out-vertices in each rectangle using $k$ blue directed edges with the orientation from in-vertices to out-vertices, such that the blue directed edges do not have any common vertices. An example of blue directed edges for the graph $G_{\sigma_1,\ldots,\sigma_k} (A_1,\ldots,A_m)$ is provided in Figure \ref{figure-example-multi-2}.

\begin{figure}[ht]
    \centering
    \begin{subfigure}{0.49\textwidth}
        \centering
        \includegraphics[scale=0.48]{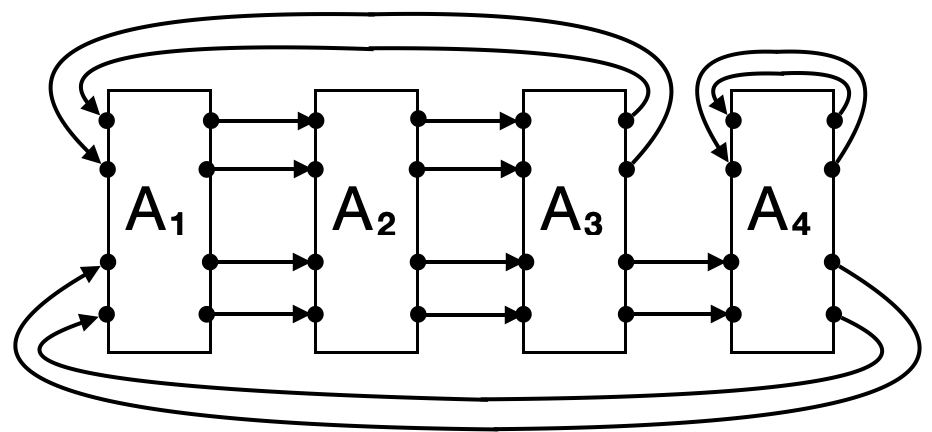}
        \caption{Graph $G_{\sigma_1,\ldots,\sigma_4} (A_1,\ldots,A_4)$}
        \label{figure-example-multi-1}
    \end{subfigure}
    \begin{subfigure}{0.49\textwidth}
        \centering
        \includegraphics[scale=0.48]{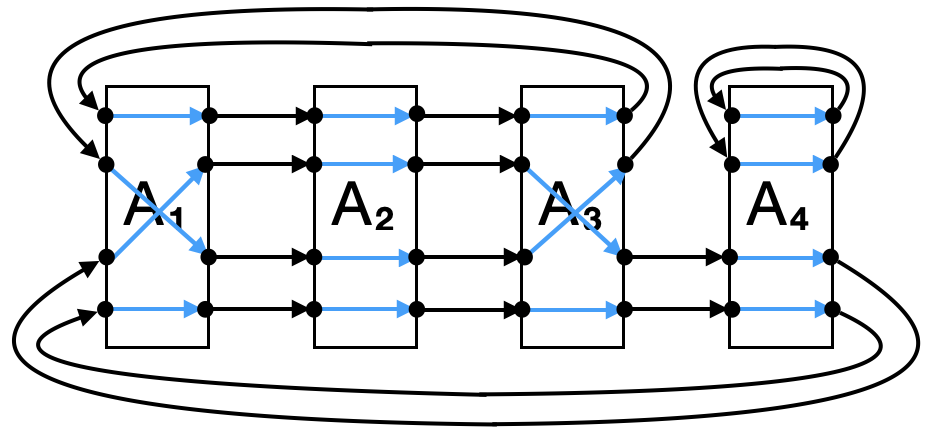}
        \caption{Graph $G_{\sigma_1,\ldots,\sigma_4} (A_1,\ldots,A_4)$ with blue directed edges}
        \label{figure-example-multi-2}
    \end{subfigure}
    \caption{Graph $G_{\sigma_1,\ldots,\sigma_4} (A_1,\ldots,A_4)$ with $\sigma_1=\sigma_2=(123)(4)$ and $\sigma_3=\sigma_4=(1234)$}
\end{figure}

For the graph $G_{\sigma_1,\ldots,\sigma_k} (A_1,\ldots,A_m)$, if we remove some directed edges and rectangles, the remaining part $G'_{\sigma_1,\ldots,\sigma_k} (A_1,\ldots,A_m)$ is called a \emph{partial} graph of $G_{\sigma_1,\ldots,\sigma_k} (A_1,\ldots,A_m)$. The directed edges and rectangles that are removed from the graph $G_{\sigma_1,\ldots,\sigma_k} (A_1,\ldots,A_m)$ when obtaining the partial graph $G'_{\sigma_1,\ldots,\sigma_k} (A_1,\ldots,A_m)$ form a partial graph $G''_{\sigma_1,\ldots,\sigma_k} (A_1,\ldots,A_m)$. The partial graph $G''_{\sigma_1,\ldots,\sigma_k} (A_1,\ldots,A_m)$ is called the \emph{complement} of $G'_{\sigma_1,\ldots,\sigma_k} (A_1,\ldots,A_m)$.

\subsection{Graphs for partial permutations} \label{sec:graph-partial}

Recall that $Y$ is the matrix introduced in Section \ref{sec:introduction}. In order to provide a graphical explanation for $Y$, we need to extend the graph defined previously for the partial permutations.

For partial permutations $\sigma_1,\ldots,\sigma_k \in \cP'([m])$, we interpret the matrix $Y$ of the partial trace given in \eqref{eq:tensor_invariant} as a graph $G_{\sigma_1,\ldots,\sigma_k} (A_1,\ldots,A_m)$ with an idea similar to that of Section \ref{sec:graph-multi leg}. We sketch the description in the following.

The graph $G_{\sigma_1,\ldots,\sigma_k} (A_1,\ldots,A_m)$ has $m$ rectangles, $A_1,\ldots,A_m$, which represent the corresponding matrices.
For all $1 \le j \le k$, we connect the directed edges with the color $col_j$ according to each partial permutation $\sigma_j$.
For $1 \le j \le k$, for the partial permutation $\sigma_j$, the directed edges are from $A_i$ to $A_{\sigma_j(i)}$ for all $i \in D(\sigma_j)$.
For the edge from $A_i$ to $A_{\sigma_j(i)}$, the vertex on $A_i$ is an out-vertex with color $col_j$, while the vertex on $A_{\sigma_j(i)}$ is an in-vertex with color $col_j$.
Note that when $i \notin D(\sigma_j)$, the partial permutation $\sigma_j$ does not provide an out-edge for rectangle $A_i$. In this case, we add an out-vertex  with color $col_j$ on $A_i$.
Similarly, if $i$ is not in the image of $\sigma_j$, then $\sigma_j$ does not provide an in-edge for rectangle $A_i$. We also add an in-vertex with color $col_j$ on $A_i$.
We call the in-vertices and out-vertices we added in this way \emph{open} in-vertices and \emph{open} out-vertices, respectively.

Thus, in the graph $G_{\sigma_1,\ldots,\sigma_k} (A_1,\ldots,A_m)$, each rectangle has $k$ in-vertices and $k$ out-vertices, and each out-vertex and in-vertex is of different colors from $col_1,\ldots,col_k$.
The numbers of open in-vertices and open out-vertices are the same, which is $\sum_{j=1}^k (m-D(\sigma_j))$.
Moreover, the number of directed edges in the graph $G_{\sigma_1,\ldots,\sigma_k} (A_1,\ldots,A_m)$ is $\sum_{j=1}^k D(\sigma_j)$.
We provide an example in Figure \ref{figure-example-matrix-1} for the graph $G_{\sigma_1,\ldots,\sigma_k} (A_1,\ldots,A_m)$.

In the graph $G_{\sigma_1,\ldots,\sigma_k} (A_1,\ldots,A_m)$, we can connect the in-vertices and the out-vertices in the same rectangle using $k$ blue directed edges whose orientation is from in-vertices to out-vertices, such that the blue directed edges do not have any common vertices.
An example of blue directed edges for the graph $G_{\sigma_1,\ldots,\sigma_k} (A_1,\ldots,A_m)$ is provided in Figure \ref{figure-example-matrix-2}.

\begin{figure}[ht]
    \centering
    \begin{subfigure}{0.48\textwidth}
        \centering
        \includegraphics[scale=0.48]{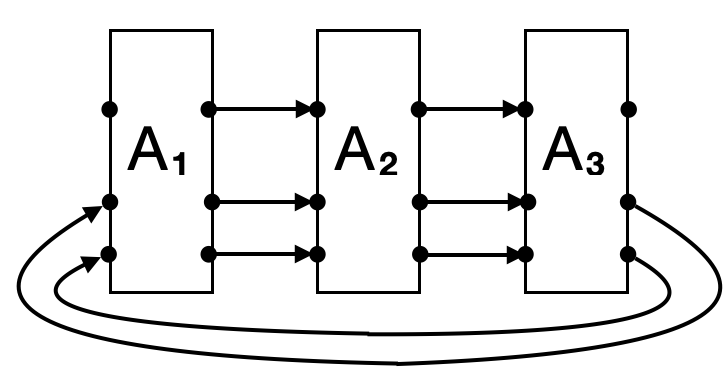}
        \caption{Graph $G_{\sigma_1,\sigma_2,\sigma_3} (A_1,\ldots,A_3)$}
        \label{figure-example-matrix-1}
    \end{subfigure}
    \begin{subfigure}{0.48\textwidth}
        \centering
        \includegraphics[scale=0.48]{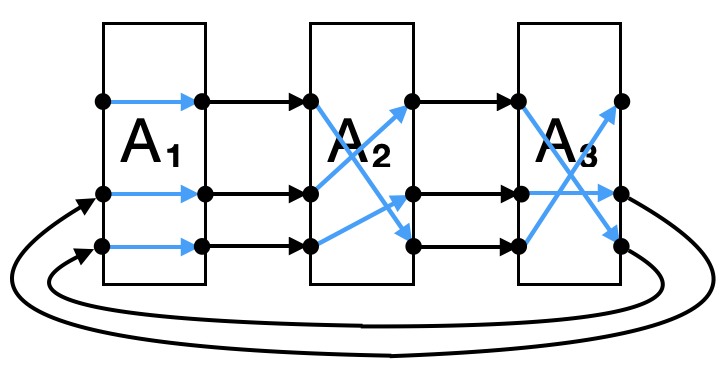}
        \caption{Graph $G_{\sigma_1,\sigma_2,\sigma_3} (A_1,\ldots,A_3)$ with blue directed edges}
        \label{figure-example-matrix-2}
    \end{subfigure}
    \caption{$k=m=3$, graph $G_{\sigma_1,\sigma_2,\sigma_3} (A_1,\ldots,A_3)$, $\sigma_2=\sigma_3=(123)$ and $\sigma_1(1)=2, \sigma_1(2)=3$}
\end{figure}

Let us make the following remarks about partial graphs:

\begin{remark}
A graph $G_{\sigma_1,\ldots,\sigma_k}(A_1,\ldots,A_m)$ for partial permutations $\sigma_1,\ldots,\sigma_k \in \cP'([m])$ can be viewed as a partial graph of some graph $G_{\sigma_1',\ldots,\sigma_k'} (A_1,\ldots,A_m)$ for some permutations $\sigma_1',\ldots,\sigma_k'$.

However, for a graph $G_{\sigma_1',\ldots,\sigma_k'}(A_1,\ldots,A_m)$ with some permutations $\sigma_1',\ldots,\sigma_k' \in \cP([m])$, its partial graph $G'_{\sigma_1',\ldots,\sigma_k'}(A_1,\ldots,A_m)$ may not be a graph of some partial permutations.
\end{remark}

Lastly, note that the properties of graphs in which we are interested do
not depend on the value of $A_1,\ldots,A_m$. Hence, the graph can also be understood as a map on the space of matrices.

\begin{remark}
For $k,m \in \bN$, for partial permutations $\sigma_1,\ldots,\sigma_k \in \cP'([m])$, the graph $G_{\sigma_1,\ldots,\sigma_k} (A_1,\ldots,A_m)$ is a matrix that belongs to $M_{N^{m-D(\sigma_1)}}(\bC) \otimes \ldots \otimes M_{N^{m-D(\sigma_k)}}(\bC)$. This correspondence induces a multi-linear mapping
\begin{align*}
    G_{\sigma_1,\ldots,\sigma_k}: \left( \left( M_N(\bC) \right)^{\otimes k} \right)^m
    \longrightarrow M_{N^{m-D(\sigma_1)}}(\bC) \otimes \ldots \otimes M_{N^{m-D(\sigma_k)}}(\bC).
\end{align*}
In the sequel, we abuse the notation $G_{\sigma_1,\ldots,\sigma_k} (A_1,\ldots,A_m)$ and $G_{\sigma_1,\ldots,\sigma_k}$ if we do not emphasize the matrices $A_1,\ldots,A_m$.
\end{remark}

\subsection{Graphs for moments} \label{sec:graph-moment}

As we deal with the moments of $Y$, we introduce the graph $G^{(p)}_{\sigma_1,\ldots,\sigma_k} (A_1, \ldots, A_m)$ for the moment $\Tr((YY^*)^p)$ for any $p \in \bN$, we need to introduce the graph for the matrix $Y^*$.
For a partial permutation $\sigma \in \cP'([m])$, we define $\sigma^{-1}$ the inverse of $\sigma$ in the sense that $\sigma^{-1}$ is also a partial permutation from the image of $\sigma$ to $D(\sigma)$, so that $\sigma^{-1} \sigma$ is the identity of $D(\sigma)$. Obviously, $\sigma^{-1}$ is well-defined.
With the help of this notation, we have
\begin{align} \label{eq-def-Y*}
    Y^* = \left( \Tr_{\sigma_1^{-1}} \otimes \ldots \otimes \Tr_{\sigma_k^{-1}} \right) \left( A_m^*, \ldots, A_1^* \right).
\end{align}

We introduce the graph $G^*_{\sigma_1,\ldots,\sigma_k} (A_1^*, \ldots, A_m^*)$ for the matrix $Y^*$ in a way similar to the graph for $Y$ above.
We use rectangles $A_1^*, \ldots, A_m^*$ for the matrices and connect the rectangles with directed edges according to the partial permutations $\sigma_1, \ldots, \sigma_k$.
For $1 \le j \le k$, for $i \in D(\sigma_j)$, we connect a directed edge with color $col_j$ from the rectangle $A_{\sigma_j(i)}^*$ to the rectangle $A_i^*$.
For $1 \le j \le k$, if $i \notin D(\sigma_j^{-1})$, we add an out-vertex with color $col_j$ on $A_i^*$. We also add an in-vertex with color $col_j$ on $A_i^*$ if $i \notin D(\sigma_j)$.
We can pair the in-vertices and out-vertices of the same rectangle with blue directed edges. The orientation is always from in-vertices to out-vertices, and different blue directed edges do not share any common vertices.
For simplicity, for the graph $G^*_{\sigma_1,\ldots,\sigma_k} (A_1^*, \ldots, A_m^*)$, we always draw the in-vertices on the left of the rectangles and the out-vertices on the right of the rectangles. We also list the rectangles from right to left in the order $A_1^*, \ldots, A_m^*$ without loss of generality, since the quantities we consider do not depend on the positions of the rectangles.
An example of the graph $G^*_{\sigma_1,\ldots,\sigma_k} (A_1^*, \ldots, A_m^*)$ is provided in Figure \ref{figure-example-G*}.

\begin{figure}[ht]
    \centering
    \begin{subfigure}{0.48\textwidth}
        \centering
        \includegraphics[scale=0.48]{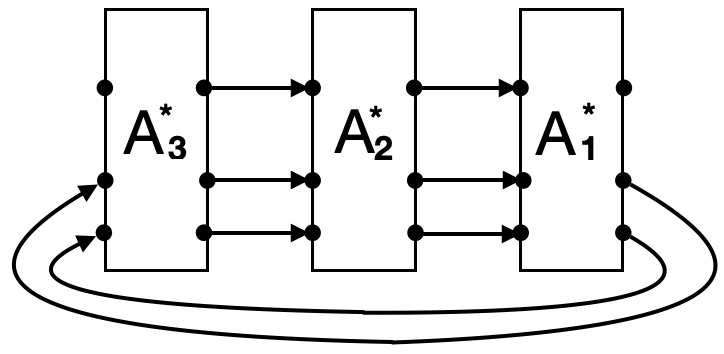}
        \caption{Graph $G^*_{\sigma_1,\sigma_2,\sigma_3}(A_1^*,A_2^*,A_3^*)$}
    \end{subfigure}
    \begin{subfigure}{0.48\textwidth}
        \centering
        \includegraphics[scale=0.48]{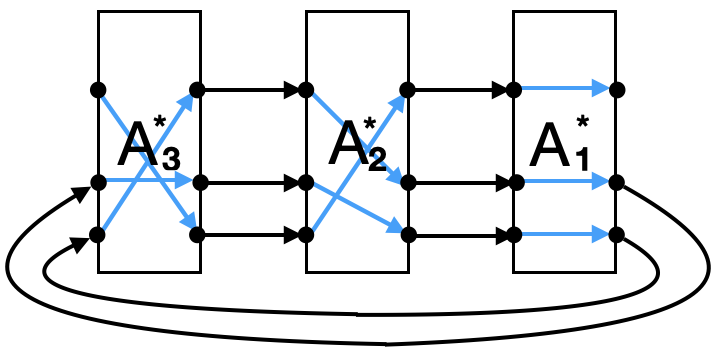}
        \caption{Graph $G^*_{\sigma_1,\sigma_2,\sigma_3}(A_1^*,A_2^*,A_3^*)$ with blue directed edges}
        \label{figure-example-matrix-G*-2}
    \end{subfigure}
    \caption{$k=m=3$, graph $G^*_{\sigma_1,\sigma_2,\sigma_3}(A_1^*,A_2^*,A_3^*)$ with $\sigma_2=\sigma_3=(123)$ and $\sigma_1(1)=2, \sigma_1(2)=3$}
    \label{figure-example-G*}
\end{figure}

\begin{sloppypar}
Note that the in-vertex with color $col_j$ on the rectangle $A_i$ in the graph $G_{\sigma_1,\ldots,\sigma_k} (A_1, \ldots, A_m)$ is open if and only if the out-vertex of the same color on the rectangle $A_i^*$ in the graph $G^*_{\sigma_1,\ldots,\sigma_k} (A_1^*, \ldots, A_m^*)$ is open.
Similarly, the out-vertex with color $col_j$ on the rectangle $A_i$ on the graph $G_{\sigma_1,\ldots,\sigma_k} (A_1, \ldots, A_m)$ is open if and only if the in-vertex of the same color on the rectangle $A_i^*$ on the graph $G^*_{\sigma_1,\ldots,\sigma_k} (A_1^*, \ldots, A_m^*)$ is open.
\end{sloppypar}

Moreover, the connection of the blue directed edges in the graph $G_{\sigma_1,\ldots,\sigma_k} (A_1, \ldots, A_m)$ can bijectively correspond to the connection of the blue directed edges in the graph $G^*_{\sigma_1,\ldots,\sigma_k} (A_1^*, \ldots, A_m^*)$ in the following way.
For all $1 \le i \le m$, if a blue directed edge in rectangle $A_i$ in the graph $G_{\sigma_1,\ldots,\sigma_k} (A_1, \ldots, A_m)$ connects the in-vertex  of color $col_{j_1}$ with the out-vertex of color $col_{j_2}$, then we connect the in-vertex of color $col_{j_2}$ and the out-vertex of color $col_{j_1}$ in rectangle $A_i^*$ in the graph $G^*_{\sigma_1,\ldots,\sigma_k} (A_1^*, \ldots, A_m^*)$ with a blue directed edge.
The orientation of the blue directed edges is always from in-vertices to out-vertices.
We refer to Figures \ref{figure-example-matrix-2} and \ref{figure-example-matrix-G*-2} for an example of the correspondence of the blue directed edges in $G_{\sigma_1,\ldots,\sigma_k} (A_1, \ldots, A_m)$ and $G^*_{\sigma_1,\ldots,\sigma_k} (A_1^*, \ldots, A_m^*)$.

Secondly, we introduce the graph for the moment $\Tr ((YY^*)^p)$ for $p \in \bN$.
Recall the graphs $G_{\sigma_1,\ldots,\sigma_k} (A_1, \ldots, A_m)$ and $G^*_{\sigma_1,\ldots,\sigma_k} (A_1^*, \ldots, A_m^*)$ for $Y$ and $Y^*$, respectively.
We duplicate the graph $G_{\sigma_1,\ldots,\sigma_k} (A_1, \ldots, A_m)$ for $p$ copies with the notations $_rG_{\sigma_1,\ldots,\sigma_k} (A_1, \ldots, A_m)$, where $1 \le r \le p$.
We also duplicate the graph $G^*_{\sigma_1,\ldots,\sigma_k} (A_1^*, \ldots, A_m^*)$ for $p$ copies, with the notations $_rG^*_{\sigma_1,\ldots,\sigma_k} (A_1^*, \ldots, A_m^*)$ for $1 \le r \le p$.
The graph $G^{(p)}_{\sigma_1,\ldots,\sigma_k} (A_1, \ldots, A_m)$ consists of the graphs $_rG_{\sigma_1,\ldots,\sigma_k} (A_1, \ldots, A_m)$ and $_rG^*_{\sigma_1,\ldots,\sigma_k} (A_1^*, \ldots, A_m^*)$ for $1 \le r \le p$.
Next, we introduce the directed edges in $G^{(p)}_{\sigma_1,\ldots,\sigma_k} (A_1, \ldots, A_m)$ that connect different copies of $_rG_{\sigma_1,\ldots,\sigma_k} (A_1, \ldots, A_m)$ and $_rG^*_{\sigma_1,\ldots,\sigma_k} (A_1^*, \ldots, A_m^*)$.
For any $1 \le r \le p$, in the graph $_rG_{\sigma_1,\ldots,\sigma_k} (A_1, \ldots, A_m)$, for any open out-vertex on the rectangle $A_i$ with color $col_j$ for some $1 \le i \le m$ and $1 \le j \le k$, we connect it with the open in-vertex of color $col_j$ on the rectangle $A_i^*$ in the graph $_rG^*_{\sigma_1,\ldots,\sigma_k} (A_1^*, \ldots, A_m^*)$ using a \emph{yellow} directed edge. The orientation of this yellow directed edge is from the out-vertex to the in-vertex.
Similarly, for any $1 \le r \le p$, in the graph $_rG^*_{\sigma_1,\ldots,\sigma_k} (A_1^*, \ldots, A_m^*)$, for any open out-vertex on the rectangle $A_i^*$ with color $col_j$ for some $1 \le i \le m$ and $1 \le j \le k$, we connect it with the open in-vertex of color $col_j$ on the rectangle $A_i$ in the graph $_{r+1}G_{\sigma_1,\ldots,\sigma_k} (A_1, \ldots, A_m)$ using a \emph{yellow} directed edge whose orientation is from the out-vertex to the in-vertex. Here, we use the convention $_{p+1}G _{\sigma_1,\ldots,\sigma_k} (A_1, \ldots, A_m) = _1G_{\sigma_1,\ldots,\sigma_k} (A_1, \ldots, A_m)$.

We provide an example in Figure \ref{figure-example-G-power} for the graph $G^{(p)}_{\sigma_1,\ldots,\sigma_k} (A_1, \ldots, A_m)$ with $k=3, m=2$. Figure \ref{figure-example-power-a} is the graph $G_{\sigma_1,\ldots,\sigma_k} (A_1, \ldots, A_m)$, and Figure \ref{figure-example-power-b} is the graph that consists of the duplications of $G_{\sigma_1,\ldots,\sigma_k} (A_1, \ldots, A_m)$ and $G^*_{\sigma_1,\ldots,\sigma_k} (A_1^*, \ldots, A_m^*)$. Figure \ref{figure-example-power-c} is the corresponding graph $G^{(p)}_{\sigma_1,\ldots,\sigma_k} (A_1, \ldots, A_m)$.

\begin{figure}[ht]
    \centering
    \begin{subfigure}{0.3\textwidth}
        \centering
        \includegraphics[scale=0.35]{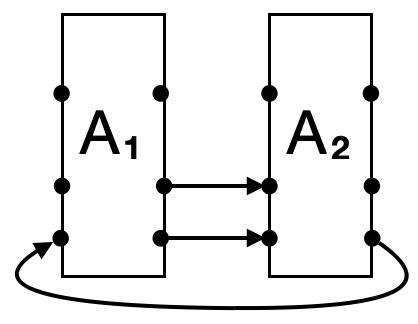}
        \caption{Graph $G_{\sigma_1,\ldots,\sigma_3}(A_1,A_2)$}
        \label{figure-example-power-a}
    \end{subfigure}
    \begin{subfigure}{0.69\textwidth}
        \centering
        \includegraphics[scale=0.35]{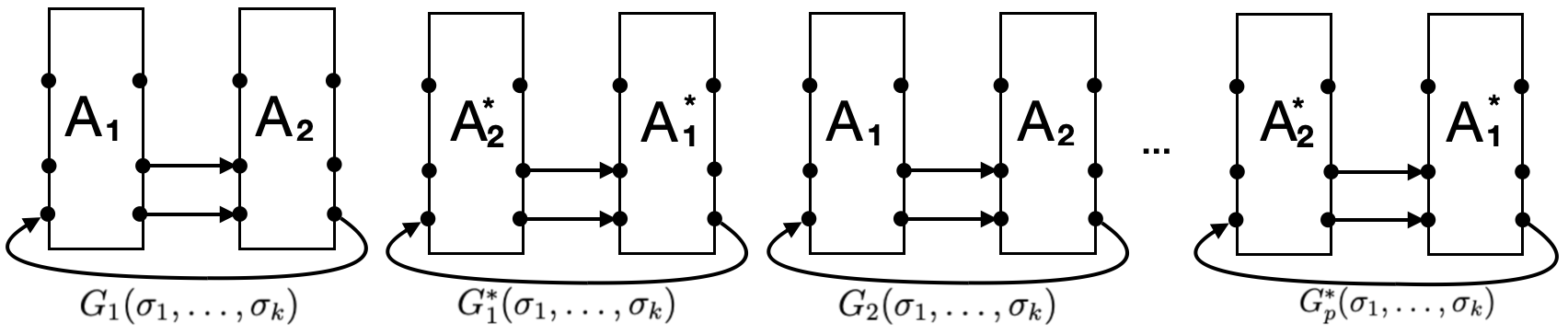}
        \caption{Graphs $_1G(\sigma_1,\ldots,\sigma_3)(A_1,A_2)$, $\ldots, _pG^*_{\sigma_1,\ldots,\sigma_3}(A_1^*,A_2^*)$}
        \label{figure-example-power-b}
    \end{subfigure}
    \begin{subfigure}{0.9\textwidth}
        \centering
        \includegraphics[scale=0.35]{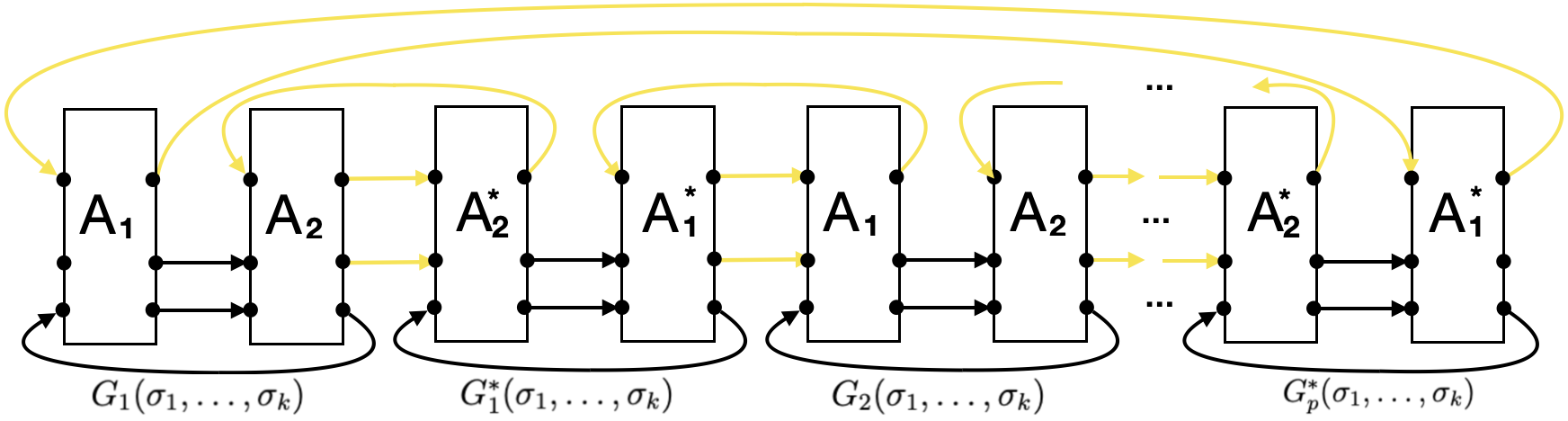}
        \caption{Graph $G^{(p)}_{\sigma_1,\ldots,\sigma_3} (A_1,A_2)$}
        \label{figure-example-power-c}
    \end{subfigure}
    \caption{$k=3,m=2$, graph $G^{(p)}_{\sigma_1,\ldots,\sigma_k} (A_1, \ldots, A_m)$ with $\sigma_1 = \emptyset$, $\sigma_2 (1) = 2$ and $\sigma_3 = (12)$}
    \label{figure-example-G-power}
\end{figure}

In particular, in the graph $G^{(p)}_{\sigma_1,\ldots,\sigma_k} (A_1, \ldots, A_m)$, the number of yellow directed edges is $2p \sum_{j=1}^k (m - D(\sigma_j))$ and the number of other edges is $2p \sum_{j=1}^k D(\sigma_j)$.

Let us point out that the graph $G^{(p)}_{\sigma_1,\ldots,\sigma_k} (A_1, \ldots, A_m)$ is a graph of a type introduced in Section \ref{sec:graph-multi leg}, if we replace the color of the yellow directed edges with the color of their vertices. Hence, we can pair the in-vertices with the out-vertices of the same rectangles in the graph $G^{(p)}_{\sigma_1,\ldots,\sigma_k} (A_1, \ldots, A_m)$ using blue directed edges, as in Section \ref{sec:graph-multi leg}.

\section{Main results} \label{sec:results}

In this paper, we obtain a result irrespective of the number of legs in the tensor model. However, for the sake of clarity, we choose to ``warm up'' with the two leg case, whose proof we give in detail, before tackling the general $k$-legs model.

\subsection{Partial trace with 2 legs}

Our first result is an optimal bound for the maximum of the partial trace under operator norm conditions for the $A_i$'s.

\begin{theorem} \label{Thm-main}
For $m,N \in \bN$, for any permutations $\sigma_1,\sigma_2 \in \cP([m])$, we denote by $M(\sigma_1,\sigma_2)$ the maximal number of directed cycles in the corresponding graph $G_{\sigma_1,\sigma_2}$, where the maximum is taken over all possibilities of blue directed edges. Then we have
\begin{align*}
    \max_{A_1,\ldots,A_m} \left| \left( \Tr_{\sigma_1} \otimes \Tr_{\sigma_2} \right) \left( A_1, \ldots, A_m \right) \right|
    = N^{M(\sigma_1,\sigma_2)},
\end{align*}
where the maximum is taken among all unitary matrices $A_1, \ldots, A_m \in M_N(\bC) \otimes M_N(\bC)$.
\end{theorem}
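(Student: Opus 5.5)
The proof splits into a lower bound from an explicit construction and an upper bound by induction on $m$; the upper bound is where the difficulty lies.

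\textbf{Lower bound.} I would take each $A_i$ from the two unitaries on $\bC^N\otimes\bC^N$ that permute the legs: the identity $I_N\otimes I_N$ and the flip $F$ with $F(x\otimes y)=y\otimes x$. In index notation, $A_i=I$ has entries $\delta_{a a'}\delta_{b b'}$, so it joins each in-vertex of rectangle $A_i$ to the out-vertex of the \emph{same} colour. The flip has entries $\delta_{a b'}\delta_{b a'}$, so it joins each in-vertex to the out-vertex of the \emph{other} colour. These are exactly the two possible blue pairings inside a rectangle. With the $A_i$ chosen this way, $(\Tr_{\sigma_1}\otimes\Tr_{\sigma_2})(A_1,\ldots,A_m)$ is a sum of products of Kronecker deltas along the red, green and blue edges. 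Each directed cycle of the resulting graph contributes one free index, so the value is exactly $N^{\#\text{cycles}}$. Choosing the blue pairing that attains $M(\sigma_1,\sigma_2)$ gives the lower bound.

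\textbf{Upper bound, reduction.} The map $(A_1,\ldots,A_m)\mapsto(\Tr_{\sigma_1}\otimes\Tr_{\sigma_2})(A_1,\ldots,A_m)$ is multilinear, and unitaries are the extreme points of the operator-norm unit ball. So it is enough to prove $|(\Tr_{\sigma_1}\otimes\Tr_{\sigma_2})(A_1,\ldots,A_m)|\le N^{M(\sigma_1,\sigma_2)}$ for all $\|A_i\|\le 1$.

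\textbf{Upper bound, induction.} I would induct on $m$ using the partial graphs of Section~\ref{sec:graph-2 leg}. Remove one rectangle, say $A_m$, together with its four incident edges. The remaining partial graph $G'$ encodes a matrix $Z\in M_N(\bC)\otimes M_N(\bC)$, namely the partial trace of $A_1,\ldots,A_{m-1}$ with open vertices. The quantity then equals $\Tr(A_m Z)$, and therefore $|\Tr(A_mZ)|\le\|Z\|_{S_1}$.

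The trace norm of $Z$ has to be controlled by a finer invariant of the open graph. For each way $\pi$ of closing the open vertices of $G'$ by a pairing (same-colour or crossed), let $c(\pi)$ be the maximal number of cycles of the closed graph over blue pairings in $A_1,\ldots,A_{m-1}$. The strengthened inductive statement I would aim for is
\begin{align*}
\|Z\|_{S_1}\le \max_\pi N^{c(\pi)}.
\end{align*}
To prove it I would again split $G'$ into a partial graph and its complement, and write $Z$ as a composition or Hilbert--Schmidt pairing of the two pieces. I would then apply Hölder's inequality $\|XY\|_{S_1}\le\|X\|_{S_2}\|Y\|_{S_2}$ and the inductive hypothesis on each piece. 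The $S_2$-norms reduce to traces of the doubled graph (the graph for $YY^*$, as in Section~\ref{sec:graph-moment}), so they are again of the form covered by the induction.

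\textbf{Main obstacle.} The hardest step is the exponent bookkeeping in this Hölder/Cauchy--Schwarz splitting. One must show that the cycle counts of the two pieces, or of their doubled graphs, combine to at most $M(\sigma_1,\sigma_2)$. The delicate configuration is when the open legs of $Z$ are joined in a crossed way, because there a naive bound loses a factor of $N$.

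My first attempt would be to always remove a rectangle whose best blue pairing is forced. This includes a rectangle lying on a cycle of length one in $\sigma_1$ or in $\sigma_2$, where a same-colour loop can be closed for free. Failing that, I would choose the rectangle so that the doubled-graph cycle count is additive, and check the additivity case by case on the two blue pairings of the removed rectangle.
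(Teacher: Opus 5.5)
Your lower bound is the same as the paper's (Theorem~\ref{Thm-lower}): choosing each $A_i\in\{I_N\otimes I_N,U\}$ according to the blue pairing gives exactly $N^{\#\mathrm{cycles}}$.

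The upper bound, however, is not proved, and the gap is the whole difficulty. Your ``strengthened'' inductive statement is really just the goal restated. By trace duality, $\sup_{\|A_m\|\le 1}|\Tr(A_mZ)|=\|Z\|_{S_1}$. Closing the open vertices of $G'$ by $\pi$ is the same as reinserting $A_m$ with blue pairing $\pi$, so $\max_\pi N^{c(\pi)}=N^{M(\sigma_1,\sigma_2)}$. Hence the claim $\|Z\|_{S_1}\le\max_\pi N^{c(\pi)}$ for all unitary $A_1,\dots,A_{m-1}$ is the theorem itself.

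All the content therefore sits in the splitting step, which you do not specify. It also does not fit an induction on $m$, since $\|X\|_{S_2}^2=\Tr(XX^*)$ is a closed network with twice as many rectangles as $X$. Suppose you grant the bound for the doubled graphs anyway. By the argument of Proposition~\ref{Prop-moment-cycle}, such a graph has at most $2M_X+c$ cycles when $X$ is $N^c\times N^c$ with internal optimum $M_X$. Hölder then gives only $N^{M_X+M_Y+c}$. This equals $N^{M}$ only if some optimal pairing is internally optimal on both pieces and carries a separate cycle through each of the $c$ glued legs. That is exactly the bookkeeping you defer. Your fixed-point reduction does not touch the generic, loop-free case.

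The paper argues differently, with no induction and a single Cauchy--Schwarz split. Let $D$ be the digraph on $[m]$ with arcs $i\to\sigma_1(i)$ and $i\to\sigma_2(i)$. In the split, $G''$ contains only coloured edges, and $G'$ keeps all rectangles and is simple, i.e.\ $D$ becomes acyclic after deleting the $R$ arcs of $G''$. Then $\|G'\|_2^2=N^R$ exactly, by repeatedly peeling off a source rectangle using $A^*A=I$ (Lemmas~\ref{Lem-simple} and~\ref{Lem-inner product}). Proposition~\ref{Prop-upper-lower} is then invoked to get $R=M(\sigma_1,\sigma_2)$.

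You should know that your difficulty reappears at this step, and the step does not hold as stated. Take $\sigma_1=\sigma_2=(12)$ with the optimal same-colour pairing. Deleting the green edge $A_1\to A_2$ and the red edge $A_2\to A_1$ leaves a rectangle-level $2$-cycle, and flips in both rectangles close it into a directed cycle. The interpolation argument breaks because, after one swap, both blue edges of the other rectangle lie on the same path.

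More fundamentally, the smallest achievable $R$ is the minimum feedback arc set $\tau(D)$, while $M$ is the maximum number of arc-disjoint directed cycles of $D$, and these can differ. Since $D$ is Eulerian, every linear order has as many backward as forward arcs across its middle cut. So $\tau(D)$ is of order $m$ whenever the underlying $4$-regular graph is an expander, e.g.\ for random $\sigma_1,\sigma_2$ with $m$ large. For such random $\sigma_1,\sigma_2$, $M=O(m/\log m)$ because short cycles are rare. The single-split route therefore yields only $N^{\tau(D)}$. Matching $N^{M}$ needs a genuinely new ingredient, which neither your sketch nor that proposition supplies.
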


Since any matrix of operator norm less than $1$ is a convex combination of unitary matrices, we obtain the following corollary:

\begin{corollary} \label{Coro-main-2 leg}
For $m,N \in \bN$, for any permutations $\sigma_1,\sigma_2 \in \cP([m])$, for any $A_1, \ldots, A_m \in M_N(\bC) \otimes M_N(\bC)$, we have
\begin{align*}
    \max_{\|A_1\|,\ldots,\|A_m\| \le 1} \left| \left( \Tr_{\sigma_1} \otimes \Tr_{\sigma_2} \right) \left( A_1, \ldots, A_m \right) \right|
    = N^{M(\sigma_1,\sigma_2)}.
\end{align*}
\end{corollary}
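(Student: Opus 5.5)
The plan is to derive Corollary \ref{Coro-main-2 leg} from Theorem \ref{Thm-main} in two steps: a lower bound, which is automatic, and an upper bound, which uses multilinearity together with the fact that the unit ball of the operator norm is the convex hull of the unitaries.

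\textbf{Lower bound.} Every unitary matrix in $M_N(\bC)\otimes M_N(\bC)$ has operator norm exactly $1$. Hence the maximum over $\|A_i\|\le 1$ is at least the maximum over unitaries. By Theorem \ref{Thm-main}, the latter equals $N^{M(\sigma_1,\sigma_2)}$.

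\textbf{Upper bound.} The unit ball of $M_n(\bC)$ in operator norm, with $n=N^2$, is the convex hull of the unitary group. To see this, fix $A$ with $\|A\|\le 1$ and take a singular value decomposition $A=U\,\mathrm{diag}(s_1,\ldots,s_n)\,V$ with $0\le s_i\le 1$.
\begin{itemize}
\item Each $s_i$ can be written as $s_i=\tfrac12(e^{i\theta_i}+e^{-i\theta_i})$, where $\cos\theta_i=s_i$.
\item Therefore $A=\tfrac12(W_1+W_2)$, where $W_1=U\,\mathrm{diag}(e^{i\theta_i})\,V$ and $W_2=U\,\mathrm{diag}(e^{-i\theta_i})\,V$ are both unitary.
\end{itemize}
The same argument shows that any contraction is an average of two unitaries, so this is elementary and needs no general Russo--Dye theorem.

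Next, the map $(A_1,\ldots,A_m)\mapsto (\Tr_{\sigma_1}\otimes\Tr_{\sigma_2})(A_1,\ldots,A_m)$ is multilinear, since it is a contraction of indices of $A_1\otimes\cdots\otimes A_m$. Write each $A_i=\sum_{l}\lambda_{i,l}U_{i,l}$ as a convex combination of unitaries. Expanding by multilinearity gives
\begin{align*}
(\Tr_{\sigma_1}\otimes\Tr_{\sigma_2})(A_1,\ldots,A_m)=\sum_{l_1,\ldots,l_m}\lambda_{1,l_1}\cdots\lambda_{m,l_m}\,(\Tr_{\sigma_1}\otimes\Tr_{\sigma_2})(U_{1,l_1},\ldots,U_{m,l_m}).
\end{align*}
The weights $\lambda_{1,l_1}\cdots\lambda_{m,l_m}$ are nonnegative and sum to $1$. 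By the triangle inequality and Theorem \ref{Thm-main} applied to each term, the absolute value is at most $N^{M(\sigma_1,\sigma_2)}$.

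\textbf{Conclusion.} The supremum over the compact unit ball is attained, so it is a maximum. Combining the two bounds gives equality.

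There is no real obstacle. All the difficulty is contained in Theorem \ref{Thm-main}; the only points needing care are the convex-hull fact and keeping track of multilinearity across all $m$ slots simultaneously.
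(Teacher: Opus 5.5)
Your proof is correct and follows the paper's route: the paper deduces the corollary in one line from Theorem \ref{Thm-main}, together with the fact that the operator-norm unit ball is the convex hull of the unitaries. You spell out the details the paper leaves implicit (writing a contraction as the average of two unitaries via the singular value decomposition, the multilinear expansion with product weights, and the trivial lower bound), and all of these steps are sound.
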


Since $M(\sigma_1,\sigma_2)$ can be difficult to characterize, we present an estimate of the number of directed cycles in the graph $G(\sigma_1,\sigma_2)$ that is related only to the permutations.
For any permutation $\sigma \in \cP([m])$, we denote by $R(\sigma)$ the number of $i \in [m]$ such that $\sigma(i) \le i$.
This index $i$ corresponds to the edge going from right to left. More precisely, in the graph $G(\sigma_1,\sigma_2)$, we call the green directed edge from $A_i$ to $A_{\sigma_1(i)}$ (resp. the red directed edge from $A_i$ to $A_{\sigma_2(i)}$) a \textit{backward} directed edge if $i \ge \sigma_1(i)$ (resp. $i \ge \sigma_2(i)$).
From the fact that every directed cycle must contain at least one backward directed edge, one can immediately deduce the following corollary.

\begin{corollary} \label{Coro-2 leg-backward}
For $m,N \in \bN$, for any permutations $\sigma_1,\sigma_2 \in \cP([m])$, for any $A_1, \ldots, A_m \in M_N(\bC) \otimes M_N(\bC)$, we have
\begin{align*}
    \max_{\|A_1\|,\ldots,\|A_m\| \le 1} \left| \left( \Tr_{\sigma_1} \otimes \Tr_{\sigma_2} \right) \left( A_1, \ldots, A_m \right) \right|
    \le N^{R(\sigma_1) + R(\sigma_2)}.
\end{align*}
\end{corollary}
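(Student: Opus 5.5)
By Corollary \ref{Coro-main-2 leg}, the left-hand side equals $N^{M(\sigma_1,\sigma_2)}$. It therefore suffices to prove the purely combinatorial inequality $M(\sigma_1,\sigma_2) \le R(\sigma_1)+R(\sigma_2)$. Equivalently, we show that for every admissible choice of blue directed edges, the number of directed cycles in $G_{\sigma_1,\sigma_2}$ (with the blue edges added) is at most $R(\sigma_1)+R(\sigma_2)$.

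Fix a choice of blue edges. Every vertex of the resulting graph has exactly one incoming and one outgoing edge. An in-vertex receives one red or green edge and emits one blue edge; an out-vertex receives one blue edge and emits one red or green edge. Hence the edge set decomposes into disjoint directed cycles, and each red or green edge lies in exactly one of them. The key claim is that every directed cycle contains at least one backward edge, meaning a green edge from $A_i$ to $A_{\sigma_1(i)}$ with $\sigma_1(i)\le i$, or a red edge from $A_i$ to $A_{\sigma_2(i)}$ with $\sigma_2(i)\le i$.

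To prove the claim, assign to each vertex the index of its rectangle and follow a cycle. Blue edges stay inside one rectangle, so they keep the index fixed. A non-backward colored edge goes from $A_i$ to $A_{\sigma(i)}$ with $\sigma(i)>i$, so it strictly increases the index. If a cycle had no backward edge, the index would be non-decreasing along the whole cycle, and strictly increasing across at least one colored edge. Every cycle contains a colored edge, because blue edges alone cannot close up: each blue edge goes from an in-vertex to an out-vertex. Returning to the starting vertex would then be impossible. One subtle case is a fixed point $\sigma(i)=i$, which gives a loop edge on $A_i$; it is counted as backward, consistent with the convention $\sigma(i)\le i$, so it causes no trouble.

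Since distinct cycles are edge-disjoint, assigning to each cycle one of its backward edges is an injection. The number of backward green edges is $R(\sigma_1)$ and the number of backward red edges is $R(\sigma_2)$. So the number of cycles is at most $R(\sigma_1)+R(\sigma_2)$, and taking the maximum over blue edge choices gives $M(\sigma_1,\sigma_2)\le R(\sigma_1)+R(\sigma_2)$. Combined with Corollary \ref{Coro-main-2 leg}, this proves the bound. No step is a real obstacle. The only points needing care are checking the cycle decomposition (each vertex has in-degree and out-degree one) and confirming that blue edges never change the index.
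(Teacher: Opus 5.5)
Your proposal is correct and follows essentially the same route as the paper, which likewise combines Corollary~\ref{Coro-main-2 leg} with the fact that every directed cycle in $G_{\sigma_1,\sigma_2}$ contains at least one backward edge. You also write out the details the paper leaves implicit: every vertex has in- and out-degree one, the rectangle index increases strictly along non-backward edges and is unchanged by blue edges, and cycles are edge-disjoint, so choosing one backward edge per cycle is an injection.
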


\begin{remark} \label{Rmk-backward}
Firstly, we would like to mention that the number of backward directed edges in $G(\sigma_1,\sigma_2)$ is a quantity that depends on the positions of the rectangles. More precisely, for any $\theta \in \cP([m])$, we have
\begin{align*}
    \max_{A_1,\ldots,A_m} \left( \Tr_{\sigma_1} \otimes \Tr_{\sigma_2} \right) \left( A_1, \ldots, A_m \right)
    = \max_{A_1,\ldots,A_m} \left( \Tr_{\theta \sigma_1 \theta^{-1}} \otimes \Tr_{\theta \sigma_2 \theta^{-1}} \right) \left( A_{\theta(1)}, \ldots, A_{\theta(m)} \right).
\end{align*}
Hence,
\begin{align*}
    \max_{A_1,\ldots,A_m} \left| \left( \Tr_{\sigma_1} \otimes \Tr_{\sigma_2} \right) \left( A_1, \ldots, A_m \right) \right|
    \le N^{R(\theta \sigma_1 \theta^{-1}) + R(\theta \sigma_2 \theta^{-1})}.
\end{align*}

Secondly, we would like to point out that the upper bound in Corollary \ref{Coro-2 leg-backward} is not optimal.
We consider the example where $\sigma_1 = (164253)$ and $\sigma_2 = (123456)$. In this case, $R(\sigma_1)=4$ as $6 \to 4 \to2$ and $5 \to 3 \to 1$, while $R(\sigma_2) = 1$. Thus, the upper bound in Corollary \ref{Coro-2 leg-backward} is $N^5$.

For the example above, if we re-label by $1 \leftrightarrow 5$ and $2 \leftrightarrow 6$, we have $\theta \sigma_1 \theta^{-1} = (524613)$ and $\theta \sigma_2 \theta^{-1} = (563412)$. In this case, we have $R(\theta \sigma_1 \theta^{-1}) = 2$ and $R(\theta \sigma_2 \theta^{-1}) = 2$. Thus, the upper bound above becomes $N^4$.
\end{remark}

\subsection{Partial trace with multiple legs}

Let $k \in \bN$ be the number of legs and $N \in \bN$ be the dimension of each leg. The multi-leg version of Theorem \ref{Thm-main} is as follows:

\begin{theorem} \label{Thm-main-multi}
For any $k,N \in \bN$, for any permutations $\sigma_1,\ldots,\sigma_k \in \cP([m])$, we denote by $M(\sigma_1,\ldots,\sigma_k)$ the maximal number of directed cycles in the graph $G_{\sigma_1,\ldots,\sigma_k}$ among all possible connections of blue directed edges. Then for any $m \in \bN$ and matrices $A_1, \ldots, A_m \in M_N(\bC)^{\otimes k}$, we have
\begin{align*}
    \max_{\|A_1\|,\ldots,\|A_m\| \le 1} \left| \left( \Tr_{\sigma_1} \otimes \ldots \otimes \Tr_{\sigma_k} \right) \left( A_1, \ldots, A_m \right) \right|
    = N^{M(\sigma_1,\ldots,\sigma_k)}.
\end{align*}
\end{theorem}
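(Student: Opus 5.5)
We prove the two inequalities separately, mirroring the two-leg argument of Theorem \ref{Thm-main}. Two reductions come first. By multilinearity and the fact that the unit ball of $M_N(\bC)^{\otimes k}$ is the closed convex hull of the unitaries, it suffices to bound $|(\Tr_{\sigma_1}\otimes\cdots\otimes\Tr_{\sigma_k})(A_1,\ldots,A_m)|$ for unitary $A_i$, or more generally for $\|A_i\|\le 1$. It also suffices to produce a maximizer among operators of norm at most one.

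\textbf{Lower bound.} Fix a choice of blue directed edges realizing $M(\sigma_1,\ldots,\sigma_k)$. In each rectangle $A_i$, the blue edges define a bijection $\pi_i$ of $[k]$: the in-vertex of color $col_j$ is joined to the out-vertex of color $col_{\pi_i(j)}$. We take $A_i$ to be the unitary permutation operator on $(\bC^N)^{\otimes k}$ that routes input leg $j$ to output leg $\pi_i(j)$. In index notation this is a product of Kronecker deltas $\delta_{a_j,b_{\pi_i(j)}}$.

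Substituting into the index sum defining the partial trace, every external edge of color $col_j$ identifies indices between rectangles, and every blue edge identifies indices inside a rectangle. The sum therefore factorizes over the connected components of the graph with blue edges. Each vertex has in-degree and out-degree one, so these components are exactly the directed cycles. Each cycle contributes a free index in $[N]$, so the value is exactly $N^{\#\text{cycles}}=N^{M}$.

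\textbf{Upper bound.} This is the main obstacle. The plan is to eliminate the rectangles one at a time, by induction on $m$, while controlling norms. Write the partial trace as a contraction of the tensor network given by the graph. Fix one rectangle $A_m$ and regard the contraction of all the other rectangles as a matrix $X$ acting between the in-legs and out-legs of $A_m$. Then the quantity equals $\Tr(A_m X)$, up to a leg permutation, and $|\Tr(A_m X)|\le \|A_m\|\,\|X\|_1$.

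The difficulty is that an inductive statement about scalars is not enough; we need a statement about trace norms or operator norms of partially contracted networks. This is exactly what partial graphs and partial permutations provide. So we strengthen the induction hypothesis to the matrix-valued setting: for partial permutations, the output $Y$ satisfies $\|Y\|\le N^{M}$ with the appropriate $M$, as in Theorem \ref{Thm-main-matrix}.

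We would first try to prove this strengthened version directly through the moment graph $G^{(p)}$. It gives
\[
\|Y\|^{2p}\le \Tr\big((YY^*)^p\big).
\]
The right-hand side is again a full-permutation quantity on $2pm$ rectangles, so it can be bounded by a cruder estimate of the form $N^{M(G^{(p)})}$. We then need the combinatorial inequality
\[
M(G^{(p)})\le 2p\,M(G)+C,
\]
with $C$ independent of $p$; taking $p\to\infty$ then yields $\|Y\|\le N^{M}$.

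The crude scalar estimate needed inside this argument would be proved by a greedy contraction. Contract one rectangle at a time, and bound each step by the operator norm of the rectangle times a factor $N$ for each cycle closed at that step. This gives a bound of the form $N^{\#\text{cycles}}$ for some blue matching, which is at most $N^{M}$. The delicate point is making this step rigorous: at each contraction, the new blue pairing has to be chosen according to which legs close, and this is where the partial-graph and complement formalism is used.

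Specializing to partial permutations with no open vertices recovers the scalar statement of Theorem \ref{Thm-main-multi}.
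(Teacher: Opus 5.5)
Your lower bound is correct and is the paper's: the permutation unitaries $U_{\pi_i}$ reduce the index sum to $N^{\#\mathrm{cycles}}$. The upper bound, however, is not established.

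The moment-graph step is circular. The estimate $|\Tr((YY^*)^p)|\le N^{M(G^{(p)})}$ that you invoke is exactly the scalar statement of Theorem \ref{Thm-main-multi} applied to the closed graph $G^{(p)}$. When there are no open vertices, $\Tr((YY^*)^p)=|Y|^{2p}$ and $M(G^{(p)})=2pM$, so specializing back gains nothing. Your first induction also has a mismatch: $|\Tr(A_mX)|\le\|X\|_1$ needs trace-norm control of $X$, whereas you induct on operator norms, and $\|X\|_1\le N^k\|X\|$ loses a factor $N^k$.

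So everything rests on the ``greedy contraction'', which is not supplied, and its natural output is not a cycle count. Eliminating rectangles in some order with norm bounds costs a factor $N$ for every edge pointing backward relative to that order. That gives $N^{b}$, where $b$ is the size of an edge set whose removal makes the digraph obtained by shrinking each rectangle to a point acyclic. The legs closed at different steps have no reason to assemble into the cycles of a single blue matching.

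The paper makes this idea rigorous with a Cauchy--Schwarz split rather than sequential contraction. Suppose deleting $R$ edges leaves a full simple partial graph $G'$, i.e.\ one with no directed cycle for any blue choice. Then $|G|=|\langle G',G''\rangle|\le\|G'\|_2\|G''\|_2$. Here $\|G''\|_2^2=N^R$ trivially, and $\|G'\|_2^2=N^{km-(km-R)}=N^R$ exactly, by repeatedly peeling a rectangle with no in-edges or no out-edges (Lemma \ref{Lem-multi-simple}) and using unitarity. This yields $N^{\min R}$. The remaining claim, $\min R=M$, is Proposition \ref{Prop-multi-upper-lower}: delete one edge from each cycle of an optimal configuration.

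This is the real crux, and the argument given there does not hold as written. Take $k=2$, $m=5$, $\sigma_1=(12345)$, $\sigma_2=(13524)$. An optimal configuration has the three cycles $1\to2\to3\to4\to1$, $4\to5\to2\to4$ and $5\to1\to3\to5$. Deleting one $\sigma_2$-edge from each leaves all $\sigma_1$-edges, which close into a cycle when every blue edge joins equal colours. The interpolation fails because a swap in one rectangle can merge two paths and put two blue edges of another rectangle on the same path.

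More fundamentally, $\min R$ is the minimum feedback arc set of the shrunk digraph, while $M$ is its maximum number of edge-disjoint cycles, and these can differ. A large Paley tournament on $m$ vertices is, by K\"onig's theorem, the shrunk digraph of some $G_{\sigma_1,\ldots,\sigma_k}$ with $k=(m-1)/2$. It has no cycles of length less than $3$, so $M\le\binom{m}{2}/3$, while every vertex ordering has $(\frac12-o(1))\binom m2$ backward edges. Hence neither your greedy scheme nor the Cauchy--Schwarz split can by itself reach $N^{M}$ in general. A matching upper bound needs a genuinely new argument at exactly this point.
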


With the help of Theorem \ref{Thm-main-multi}, we can handle the case of two legs of different dimensions.

\begin{corollary} \label{Coro-main-multi-2 leg}
Let $m,N,a,b \in \bN$, and let $\sigma,\tau \in \cP([m])$ be any permutations. We denote by $M(\sigma,\tau;a,b)$ the maximal number of directed cycles in the corresponding graph $G_{\sigma,\ldots,\sigma,\tau,\ldots,\tau}$, where $\sigma$ repeats $a$ times and $\tau$ repeats $b$ times, and where the maximum is taken over all possibilities of blue directed edges. Then for any $A_1, \ldots, A_m \in M_{N^a}(\bC) \otimes M_{N^b}(\bC)$, we have
\begin{align*}
    \max_{\|A_1\|,\ldots,\|A_m\| \le 1} \left| \left( \Tr_{\sigma} \otimes \Tr_{\tau} \right) \left( A_1, \ldots, A_m \right) \right|
    = N^{M(\sigma,\tau;a,b)},
\end{align*}
\end{corollary}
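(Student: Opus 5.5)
The plan is to reduce Corollary~\ref{Coro-main-multi-2 leg} directly to Theorem~\ref{Thm-main-multi} with $k=a+b$ legs, using the canonical identifications $M_{N^a}(\bC)\cong M_N(\bC)^{\otimes a}$ and $M_{N^b}(\bC)\cong M_N(\bC)^{\otimes b}$. Under these identifications, every $A_i\in M_{N^a}(\bC)\otimes M_{N^b}(\bC)$ is an element of $M_N(\bC)^{\otimes(a+b)}$. Operator norms are unchanged, because the identification is a $*$-isomorphism coming from the unitary identification $\bC^{N^a}\cong(\bC^N)^{\otimes a}$. Hence the feasible sets in the two maximization problems coincide.

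The key step is the algebraic identity
\begin{align*}
    \Tr_\sigma(B_1,\ldots,B_m)=\left(\Tr_\sigma\otimes\cdots\otimes\Tr_\sigma\right)(B_1,\ldots,B_m)\quad (a \text{ factors})
\end{align*}
for $B_i\in M_N(\bC)^{\otimes a}$, together with its multilinear extension to general tensors. To prove it, I will first check it on elementary tensors $B_i=b_i^{(1)}\otimes\cdots\otimes b_i^{(a)}$. For each cycle $(i_1,\ldots,i_\ell)$ of $\sigma$, the trace of a product of elementary tensors factorizes:
\begin{align*}
    \Tr\bigl(B_{i_1}\cdots B_{i_\ell}\bigr)=\prod_{s=1}^a\Tr\bigl(b_{i_1}^{(s)}\cdots b_{i_\ell}^{(s)}\bigr).
\end{align*}
Taking the product over cycles gives the identity for elementary tensors. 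Both sides are multilinear in $(B_1,\ldots,B_m)$, so the identity holds in general. I will apply this separately to the first $a$ legs with $\sigma$ and to the last $b$ legs with $\tau$. This shows that $(\Tr_\sigma\otimes\Tr_\tau)(A_1,\ldots,A_m)$ equals the $(a+b)$-leg partial trace with permutations $(\sigma,\ldots,\sigma,\tau,\ldots,\tau)$.

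With this identity in hand, Theorem~\ref{Thm-main-multi} applied with $k=a+b$ and $(\sigma_1,\ldots,\sigma_k)=(\sigma,\ldots,\sigma,\tau,\ldots,\tau)$ gives the maximum $N^{M(\sigma,\ldots,\sigma,\tau,\ldots,\tau)}$, which is $N^{M(\sigma,\tau;a,b)}$ by definition. The only mildly delicate point is bookkeeping. One must make sure that the leg ordering in the Kronecker identification matches the ordering of colors in the graph $G_{\sigma,\ldots,\sigma,\tau,\ldots,\tau}$. However, permuting colors does not change the number of directed cycles maximized over blue edges, so any consistent ordering works. I do not expect a genuine obstacle here.
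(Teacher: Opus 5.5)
Your proposal is correct and follows the same route as the paper, whose proof is exactly the reduction to Theorem~\ref{Thm-main-multi} with $k=a+b$ via $M_{N^a}(\bC)\simeq M_N(\bC)^{\otimes a}$ and $M_{N^b}(\bC)\simeq M_N(\bC)^{\otimes b}$. Your check, via elementary tensors and multilinearity, that $\Tr_\sigma$ on $M_N(\bC)^{\otimes a}$ coincides with the $a$-fold tensor power of $\Tr_\sigma$ simply makes explicit the step the paper leaves implicit.
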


Indeed, we also allow for the case of multiple legs having different dimensions for each leg.

\begin{corollary} \label{Coro-main-multi leg}
Let $m,N,k \in \bN$, $a_1,\ldots,a_k \in \bN$, and let $\sigma_1, \ldots,\sigma_k \in \cP([m])$ be any permutations. We denote by $M(\sigma_1, \ldots,\sigma_k;a_1,\ldots,a_k)$ the maximal number of directed cycles in the corresponding graph $G_{\sigma_1,\ldots,\sigma_1,\ldots,\sigma_k,\ldots,\sigma_k}$, where $\sigma_j$ repeats $a_j$ times for $1 \le j \le k$, and where the maximum is taken over all possibilities of blue directed edges. Then for any $A_1, \ldots, A_m \in M_{N^{a_1}}(\bC) \otimes \ldots \otimes M_{N^{a_k}}(\bC)$, we have
\begin{align*}
    \max_{\|A_1\|,\ldots,\|A_m\| \le 1} \left| \left( \Tr_{\sigma_1} \otimes \ldots \otimes \Tr_{\sigma_k} \right) \left( A_1, \ldots, A_m \right) \right|
    = N^{M(\sigma_1, \ldots,\sigma_k;a_1,\ldots,a_k)},
\end{align*}
\end{corollary}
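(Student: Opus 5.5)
The corollary is a direct reduction to Theorem \ref{Thm-main-multi}. Set $K = a_1 + \cdots + a_k$ and identify
\[
M_{N^{a_1}}(\bC) \otimes \cdots \otimes M_{N^{a_k}}(\bC) \cong M_N(\bC)^{\otimes K}
\]
by splitting the $j$-th leg $\bC^{N^{a_j}} \cong (\bC^N)^{\otimes a_j}$. This identification is a $*$-isomorphism, so it preserves operator norms, and the unit balls $\{\|A_i\|\le 1\}$ correspond exactly. We then show that under this identification
\[
\left( \Tr_{\sigma_1} \otimes \cdots \otimes \Tr_{\sigma_k} \right)(A_1,\ldots,A_m) = \left( \Tr_{\sigma_1} \otimes \cdots \otimes \Tr_{\sigma_1} \otimes \cdots \otimes \Tr_{\sigma_k} \otimes \cdots \otimes \Tr_{\sigma_k} \right)(A_1,\ldots,A_m),
\]
where on the right $\sigma_j$ is repeated $a_j$ times. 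Once this holds, the maximum of the left-hand side equals the maximum of the right-hand side, and Theorem \ref{Thm-main-multi} with $K$ legs and permutations $(\sigma_1,\ldots,\sigma_1,\ldots,\sigma_k,\ldots,\sigma_k)$ gives exactly $N^{M(\sigma_1,\ldots,\sigma_k;a_1,\ldots,a_k)}$. By definition, $M(\sigma_1,\ldots,\sigma_k;a_1,\ldots,a_k)$ is the quantity $M$ of that repeated graph.

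To prove the identity, write each $A_i$ in matrix units. Expand $A_i=\sum B^{(i)}_1\otimes\cdots\otimes B^{(i)}_k$ with $B^{(i)}_j \in M_{N^{a_j}}(\bC)$. Both sides are multilinear, so it suffices to check the identity on elementary tensors. Each factor $\Tr_{\sigma_j}$ is a product over the cycles of $\sigma_j$ of traces of cyclic products. So the identity reduces to the single-leg statement that, for $C_1,\ldots,C_m \in M_N(\bC)^{\otimes a}$,
\[
\Tr_\sigma(C_1,\ldots,C_m)=(\Tr_\sigma\otimes\cdots\otimes\Tr_\sigma)(C_1,\ldots,C_m)
\]
with $a$ factors on the right. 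By multilinearity again, take $C_i = c_i^{(1)}\otimes\cdots\otimes c_i^{(a)}$. Since products are taken factorwise, $\Tr(C_{i_1}\cdots C_{i_r}) = \prod_{l} \Tr(c_{i_1}^{(l)}\cdots c_{i_r}^{(l)})$, and taking the product over the cycles of $\sigma$ gives the claim. Graphically this is just the observation that a leg of dimension $N^{a}$ is a bundle of $a$ parallel legs of dimension $N$, each following the same permutation.

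The only point requiring care is bookkeeping: the ordering of the tensor factors must match the ordering of the colors in the graph $G_{\sigma_1,\ldots,\sigma_1,\ldots,\sigma_k,\ldots,\sigma_k}$. The value of $M$ does not depend on the labeling of the colors, so this is harmless. I do not expect a genuine obstacle, since all of the analytic content sits in Theorem \ref{Thm-main-multi}. The $k=2$ case, Corollary \ref{Coro-main-multi-2 leg}, is the special instance with $(a_1,a_2)=(a,b)$.
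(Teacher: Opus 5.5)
Your proposal is correct and follows the paper's own route: the paper simply states that the corollary is a direct application of Theorem \ref{Thm-main-multi} via the identification $M_{N^{a_j}}(\bC)\simeq M_N(\bC)^{\otimes a_j}$ and omits the details. You supply exactly those omitted details, namely norm preservation under the $*$-isomorphism and the check that $\Tr_\sigma$ on a leg of dimension $N^{a}$ equals $\Tr_\sigma^{\otimes a}$ on the $a$ split legs.
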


Now we turn to a special setting, where $\sigma_1 = \sigma \in \cP([m])$ is an arbitrary permutation, and $\sigma_2 = \ldots = \sigma_k = \gamma$, where $\gamma = (1 2 3 \ldots m) \in \cP$. We consider $k$ to be large. In this setting, the upper bound in the multiple leg version of Corollary \ref{Coro-2 leg-backward} is optimal.

\begin{corollary} \label{Coro-multi-backward}
Let $m,k,N \in \bN$.
For any permutation $\sigma \in \cP([m])$, for any matrices $A_1,\ldots,A_m \in M_N(\bC)^{\otimes k}$, for any $k \ge m+1$, we have
\begin{align*}
    \max_{\|A_1\|,\ldots,\|A_m\| \le 1} \left| \left( \Tr_{\sigma} \otimes \Tr_{\gamma} \otimes \ldots \otimes \Tr_{\gamma} \right) \left( A_1, \ldots, A_m \right) \right|
    = N^{R(\sigma)+k-1}.
\end{align*}
\end{corollary}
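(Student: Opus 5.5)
By Theorem \ref{Thm-main-multi}, the left-hand side equals $N^{M(\sigma,\gamma,\ldots,\gamma)}$. So the plan is to prove the purely combinatorial identity $M(\sigma,\gamma,\ldots,\gamma)=R(\sigma)+k-1$ whenever $k\ge m+1$. I will prove the two inequalities separately. The upper bound is the multi-leg analogue of Corollary \ref{Coro-2 leg-backward}; the lower bound is an explicit choice of blue edges.

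\emph{Upper bound.} Place the rectangles $A_1,\ldots,A_m$ from left to right.
\begin{enumerate}
\item Any non-backward colored edge goes strictly to the right, and blue edges stay inside a rectangle. Hence every directed cycle must contain at least one backward edge; a fixed point $\sigma(i)=i$ counts as backward.
\item Once a blue matching is chosen, every vertex has in-degree and out-degree one. So the colored edges are partitioned among the directed cycles, and the cycles are edge-disjoint.
\item Therefore the number of cycles is at most the number of backward edges. For $m\ge2$ this number is $R(\sigma)+(k-1)R(\gamma)=R(\sigma)+k-1$, since $\gamma$ has exactly one backward edge, namely $m\to1$.
\end{enumerate}
This gives $M\le R(\sigma)+k-1$ for every $k$. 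The hypothesis $k\ge m+1$ is needed only for sharpness.

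\emph{Lower bound.} I must build a blue matching in which every backward edge lies in its own cycle. Cut each cycle of $\sigma$ at its backward edges. Each cycle then splits into \emph{segments} $j_0\to j_1\to\cdots\to j_t$: forward $\sigma$-edges followed by exactly one backward edge $j_{t-1}\to j_t$. There are exactly $R(\sigma)\le m\le k-1$ segments.

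For each segment $s$, I reserve its own $\gamma$-color $c_s$; distinct colors are available because $k-1\ge m\ge R(\sigma)$. I then close the segment into a cycle by following $c_s$-edges from $j_t$ back to $j_0$, with blue edges joining in-color to out-color at each rectangle along the way.
\begin{itemize}
\item When $j_t\le j_0$, this return path uses only forward $\gamma$-edges. The cycle then contains exactly one backward edge, the $\sigma$-one.
\item The remaining $c_s$-edges form the complementary arc $j_0\to j_0+1\to\cdots\to m\to1\to\cdots\to j_t$. This arc contains the unique backward $c_s$-edge. I would close it into a second cycle using blue edges at $j_0$ and $j_t$ that connect color $c_s$ to itself.
\item Every untouched $\gamma$-color is closed into its own monochromatic cycle $1\to2\to\cdots\to m\to1$.
\end{itemize}
At each rectangle, the blue edges must be completed to a perfect matching compatible with all of the above, and I would verify that this is possible.

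\emph{Main obstacle.} The hardest case is a segment with $j_t>j_0$. Then any return from $j_t$ to $j_0$ along a single $\gamma$-color must pass through $m\to1$, and one backward edge is lost. Since $k\ge m+1$ leaves at least one spare color beyond those needed for the segments, I would first try to reroute: go up by color $c_s$ from $j_t$ to $m$, use a spare color (or a color of another segment) to cross back, and arrange the leftover pieces so that each color's backward edge $m\to1$ still closes its own cycle. If this bookkeeping fails, the fallback is induction on $m$: remove rectangle $A_m$ and contract its edges, and check that $R(\sigma)$ and the cycle count change by the same amount.
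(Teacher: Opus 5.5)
Your upper bound is correct and is the paper's argument: every cycle contains a backward edge, and there are $R(\sigma)+(k-1)R(\gamma)=R(\sigma)+k-1$ of them. Note that $R(\gamma)=1$ also when $m=1$.

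The gap is in the lower bound, and it is structural rather than a matter of bookkeeping. A segment with $j_t>j_0$ can \emph{never} lie in a single cycle that has only one backward edge. After the backward edge $j_{t-1}\to j_t$, the cycle must get from $A_{j_t}$ back to $A_{j_0}$ with $j_0<j_t$. Non-backward edges strictly increase the index and blue edges stay inside a rectangle, so this needs a second backward edge, whatever colors you use. In particular, every $\gamma$-edge that crosses back is an edge $m\to1$. An optimal matching has exactly one backward edge per cycle, so no rerouting can realize your plan for such segments. They are common: for $\sigma=(1\,3\,2)$ the segment $1\to3\to2$ has $j_0=1<j_t=2$. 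The induction fallback is too vague to fill this gap.

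There is a second error even in the good case $j_t<j_0$. The complementary arc of color $c_s$ leaves the out-vertex of $A_{j_0}$ and enters the in-vertex of $A_{j_t}$, which are different rectangles. The $c_s$-vertices there that a same-color blue edge would need are already used by the return path. So the arc cannot be closed on itself; it must be chained with other pieces. You then still need an argument that the $k-1$ edges $m\to1$ land in distinct cycles.

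The missing idea is to attach each reserved color to a backward edge of $\sigma$ \emph{alone}, leaving the forward $\sigma$-edges unconstrained. Take a backward edge $j\to i$ with $i<j$ and give it its own color $c\in\{col_2,\ldots,col_k\}$. Close it by the forward $c$-path $i\to i+1\to\cdots\to j$:
\begin{itemize}
\item at $A_i$, join the $col_1$ in-vertex to the $c$ out-vertex;
\item at $A_u$ for $i<u<j$, join $c$ to $c$;
\item at $A_j$, join the $c$ in-vertex to the $col_1$ out-vertex.
\end{itemize}
Fixed points of $\sigma$ are closed by joining their two $col_1$-vertices, as in Remark \ref{Rmk-k-proof}. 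Because $i\le j$, this return path is always forward. The prescriptions are compatible because the colors are distinct, and this is where $R(\sigma)\le m\le k-1$ is used.

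Then choose all remaining blue edges to maximize the number of cycles. The only backward edges not yet used are the $k-1$ edges $m\to1$ of the $\gamma$-colors. Every prescribed blue edge in $A_m$ ends at the $col_1$ out-vertex, so the blue edges in $A_m$ feeding these $k-1$ out-vertices are free. The swap argument of Lemma \ref{Lem-multi-upper-lower}, applied in $A_m$, therefore forces these edges into distinct cycles. Every cycle then has exactly one backward edge, giving $R(\sigma)+k-1$ cycles. This is the paper's Step 1 / Step 2 construction.
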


\begin{remark} \label{Rmk-k}
The condition $k \ge m+1$ can be weakened as follows.
For $i \in [m-1]$ such that $\sigma(i) < i$, we define the set $I_i = \{u \in \bN: \sigma(i) < u \le i \}$. We introduce the operation such that if $I_i \cap I_j = \emptyset$, we can replace both $I_i$ and $I_j$ by $I_i \cup I_j$. We continue this operation until no more operations can be applied. Let $K$ be the number of remaining sets. Then condition $k \ge m+1$ can be improved $k \ge K+1$. We do not know whether this improvement is optimal. 
In Remark \ref{Rmk-k-proof} we explain how to make this improvement.
\end{remark}

\begin{remark}
 Note that when $k$ is large, any permutation $\theta$ acting by conjugation on $\gamma$ will increase $R(\gamma)$. By Corollary \ref{Coro-2 leg-backward}, since we have $k-1$ copies of $\gamma$, this phenomenon increases as $k$ increases.
\end{remark}

\subsection{Matrix of partial trace for partial permutations}

Let $m,k,N \in \bN$, for any partial permutations $\sigma_1, \ldots, \sigma_k \in \cP'([m])$, and for matrices $A_1,\ldots,A_m \in M_N(\bC)^{\otimes k}$, consider the following matrix
\begin{align*}
    Y = \left( \Tr_{\sigma_1} \otimes \Tr_{\sigma_2} \otimes \ldots \otimes \Tr_{\sigma_k} \right) \left( A_1, \ldots, A_m \right),
\end{align*}
which belongs to $M_{N^{m-|D(\sigma_1)|}}(\bC) \otimes \ldots \otimes M_{N^{m-|D(\sigma_k)|}}(\bC)$.
We have the following estimate of $Y$.

\begin{theorem} \label{Thm-main-matrix}
Let $m,k,N \in \bN$.
For partial permutations $\sigma_1, \ldots, \sigma_k \in \cP'([m])$, we denote by $M(\sigma_1,\ldots,\sigma_k)$ the maximum number of directed cycles in the corresponding graph $G_{\sigma_1,\ldots,\sigma_k}$. Then for any matrices $A_1,\ldots,A_m \in M_N(\bC)^{\otimes k}$, we have
\begin{align} \label{eq-tr-YY*^p}
    \max_{\|A_1\|,\ldots,\|A_m\| \le 1} \left| \Tr \left( (YY^*)^p \right) \right|
    = N^{2p M(\sigma_1,\ldots,\sigma_k) + \sum_{j=1}^k (m-D(\sigma_j))}, \quad \forall p \in \bN.
\end{align}
Moreover, the matrices $A_1,\ldots,A_m$, which attain the maximum in \eqref{eq-tr-YY*^p}, can be chosen not to depend on $p$.
\end{theorem}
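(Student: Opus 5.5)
The plan is to reduce Theorem \ref{Thm-main-matrix} to Theorem \ref{Thm-main-multi} through the moment graph $G^{(p)}_{\sigma_1,\ldots,\sigma_k}$. As noted at the end of Section \ref{sec:graph-moment}, once the yellow edges are recoloured by the colour of their endpoints, $\Tr((YY^*)^p)$ is exactly a full partial trace $(\Tr_{\tilde\sigma_1}\otimes\cdots\otimes\Tr_{\tilde\sigma_k})(A_1,A_1^*,\ldots)$ on $2pm$ matrices with genuine permutations $\tilde\sigma_j\in\cP([2pm])$. Theorem \ref{Thm-main-multi} therefore gives
\begin{align*}
\max_{\|A_i\|\le 1}|\Tr((YY^*)^p)|\le N^{M^{(p)}},
\end{align*}
where $M^{(p)}$ is the maximal number of directed cycles in $G^{(p)}$. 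The constraint that the $2p$ copies use the same $A_i$ or $A_i^*$ only decreases the maximum, so this direction is free. The task then splits into two parts: a combinatorial identity $M^{(p)} = 2pM(\sigma_1,\ldots,\sigma_k)+\sum_j (m-D(\sigma_j))$, and a matching lower bound realised by a single $p$-independent choice of the $A_i$.

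For the combinatorial identity, I take $M(\sigma_1,\ldots,\sigma_k)$ for partial permutations to mean the number of \emph{closed} directed cycles in $G_{\sigma_1,\ldots,\sigma_k}$. With blue edges fixed, the remaining components are directed paths from open in-vertices to open out-vertices, and there are $\sum_j(m-D(\sigma_j))$ of them.

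For the lower bound on $M^{(p)}$, take an optimal blue configuration in every copy ${}_rG$ and use the corresponding blue configuration (via the bijection of Section \ref{sec:graph-moment}) in every ${}_rG^*$. Each copy then contributes $M$ closed cycles, giving $2pM$ in total. Each open path in ${}_rG$ is reversed in ${}_rG^*$, and the two are joined by yellow edges into one cycle, which yields the extra $\sum_j(m-D(\sigma_j))$ cycles.

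For the upper bound on $M^{(p)}$, fix an arbitrary blue configuration on $G^{(p)}$. Classify its cycles as those lying entirely inside one copy, and those using yellow edges. The first kind number at most $2pM$. The second kind is the hard part. Each such cycle uses at least two yellow edges, and the total number of yellow edges is $2p\sum_j(m-D(\sigma_j))$, so this count alone only gives $p\sum_j(m-D(\sigma_j))$ cycles, which is too weak. I would instead run a counting argument per copy. In a copy ${}_rG$ with some blue configuration, let $c$ be the number of closed cycles and $q$ the number of open paths, so $q=\sum_j(m-D(\sigma_j))$ is fixed. The key inequality to aim for is an exchange bound: any cycle crossing between copies can be charged so that copies containing a path segment "lose" at most as much as they gain. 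Concretely, I would try an Euler-type argument, cutting every yellow edge and counting components. Cycles through yellow edges correspond to cycles of a permutation on the open endpoints built from the path structures of the copies, and there are at most $q$ of them beyond what could be obtained by cycles internal to the copies. I expect this step, showing that inter-copy cycles add exactly $q$ and no more, to be the main obstacle. If it resists, the fallback is a direct analytic route: prove $\|Y\|\le N^{M}$ by splitting $Y$ through a Cauchy–Schwarz/complete-positivity argument, then use $\Tr((YY^*)^p)\le \|Y\|^{2p}\,\mathrm{rank}$-type bounds, with the rank controlled by the dimension $N^{\sum_j(m-D(\sigma_j))}$. Either way, this gives the upper bound $N^{2pM+\sum_j(m-D(\sigma_j))}$.

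For attainment with $p$-independent matrices, I would reuse the extremal construction from the lower-bound proof of Theorem \ref{Thm-main-multi} (Section \ref{sec:lower}) applied to $G_{\sigma_1,\ldots,\sigma_k}$ itself. Each $A_i$ is a tensor product of identities and swap (flip) operators realising the optimal blue pairing. These are unitary and self-adjoint up to ordering, and with this choice $Y$ should equal $N^{M}$ times a partial isometry, in fact a permutation-type unitary on the open legs. Hence $YY^*=N^{2M}I$ on a space of dimension $N^{\sum_j(m-D(\sigma_j))}$, giving exactly $\Tr((YY^*)^p)=N^{2pM+\sum_j(m-D(\sigma_j))}$ for all $p$ simultaneously. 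Letting $p\to\infty$ also yields Corollary \ref{Coro-operator norm}.
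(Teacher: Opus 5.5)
\textbf{Overall.} Your reduction is the same as the paper's. Recolouring the yellow edges turns $\Tr((YY^*)^p)$ into a full partial trace on $2pm$ rectangles, so Theorem \ref{Thm-main-multi} bounds it by $N^{M^{(p)}}$. Everything then hinges on $M^{(p)}=2pM(\sigma_1,\ldots,\sigma_k)+q$, where $q=\sum_j(m-D(\sigma_j))$.

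\textbf{Attainment.} Your attainment step is correct, and slightly more transparent than the paper's. With $A_i=U_{\pi_i}$ for an optimal blue configuration, each closed cycle contributes a factor $N$, and each open path identifies one open input leg with one open output leg. Hence $Y=N^{M}P$ with $P$ a permutation matrix on $(\bC^N)^{\otimes q}$, and $\Tr((YY^*)^p)=N^{2pM+q}$ for all $p$ at once. The paper instead lifts the optimal configuration to $G^{(p)}$ and invokes \eqref{eq-multi-U}.

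\textbf{The gap.} The upper bound $M^{(p)}\le 2pM+q$ is not established. You flag it as the main obstacle and offer only a heuristic exchange/Euler sketch and a fallback.

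The missing observation concerns the orientation of the yellow edges, which you appear to have misread. Yellow edges run only from ${}_rG$ to ${}_rG^*$ and from ${}_rG^*$ to ${}_{r+1}G$ (with ${}_{p+1}G={}_1G$), and every other edge stays inside its copy. So a directed cycle that uses a yellow edge can only close up by going forward through all $2p$ copies in the cyclic order ${}_1G,{}_1G^*,{}_2G,\ldots,{}_pG^*$. It therefore contains at least $2p$ yellow edges, not merely two; this is Lemma \ref{Lem-moment-cycle}.

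There are $2pq$ yellow edges in total, so there are at most $q$ such cycles. Together with your $2pM$ bound for cycles inside single copies, this is the entire upper bound (Proposition \ref{Prop-moment-cycle}). In the language of your own sketch: in each copy, the open paths define a bijection from open in-vertices to open out-vertices. The yellow-edge cycles are exactly the cycles of the composite of these $2p$ bijections, which is one permutation of the $q$ open in-vertices of ${}_1G$, so there are at most $q$ of them.

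Your ``at least two yellow edges'' count suggests a wiring in which ${}_rG^*$ feeds back into ${}_rG$. So does your literal description of the extremal cycles as a path in ${}_rG$ joined to its reverse in ${}_rG^*$. Under that wiring there could be up to $pq$ mixed cycles, which is why the count looked too weak to you.

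The fallback does not close the gap either. The bound $\|Y\|\le N^{M}$ is Corollary \ref{Coro-operator norm}, which the paper deduces from this very theorem. Your one-line Cauchy--Schwarz/complete-positivity sketch gives no independent proof of it, so as written that route is circular.
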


Choosing $p \in \bN$ to be large enough, we obtain the following estimate on the operator norm of $Y$.

\begin{corollary} \label{Coro-operator norm}
Let the conditions of Theorem \ref{Thm-main-matrix} hold. Then we have
\begin{align*}
    \max_{\|A_1\|,\ldots,\|A_m\| \le 1} \|Y\| = N^{M(\sigma_1,\ldots,\sigma_k)}.
\end{align*}
\end{corollary}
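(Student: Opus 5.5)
We deduce Corollary \ref{Coro-operator norm} from Theorem \ref{Thm-main-matrix} by comparing $\|Y\|$ with the Schatten-type moments $\Tr((YY^*)^p)^{1/(2p)}$ and letting $p\to\infty$. Write $d=\prod_{j=1}^k N^{m-D(\sigma_j)} = N^{\sum_j (m-D(\sigma_j))}$ for the dimension of the space on which $Y$ acts.

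For the upper bound, fix $A_1,\ldots,A_m$ with $\|A_i\|\le 1$. Since $YY^*\ge 0$ we have
\begin{align*}
    \|Y\|^{2p} = \|YY^*\|^p \le \Tr\left((YY^*)^p\right) \le N^{2pM(\sigma_1,\ldots,\sigma_k) + \sum_{j=1}^k (m-D(\sigma_j))},
\end{align*}
where the last inequality is Theorem \ref{Thm-main-matrix}. Taking $2p$-th roots gives
\begin{align*}
    \|Y\| \le N^{M(\sigma_1,\ldots,\sigma_k)} \, N^{\sum_j (m-D(\sigma_j))/(2p)}.
\end{align*}
The left side does not depend on $p$, so letting $p\to\infty$ yields $\|Y\|\le N^{M(\sigma_1,\ldots,\sigma_k)}$ for every admissible tuple.

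For the lower bound, we use the last assertion of Theorem \ref{Thm-main-matrix}: there is one tuple $A_1,\ldots,A_m$ in the unit ball, independent of $p$, with
\begin{align*}
    \Tr\left((YY^*)^p\right) = N^{2pM + \sum_j (m-D(\sigma_j))} \quad \text{for all } p,
\end{align*}
where $M=M(\sigma_1,\ldots,\sigma_k)$. For this tuple we also have $\Tr((YY^*)^p)\le d\,\|Y\|^{2p}$, so
\begin{align*}
    \|Y\|^{2p} \ge d^{-1} N^{2pM + \sum_j (m-D(\sigma_j))} = N^{2pM}.
\end{align*}
Hence $\|Y\|\ge N^{M}$, already at $p=1$. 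Combined with the upper bound, the maximum equals $N^{M}$. It is attained because the unit ball is compact and $Y$ depends continuously on the $A_i$; in any case the explicit tuple above attains it.

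The only delicate point is that the maximizer must not depend on $p$. Otherwise the lower bound could only be read off along a varying family of tuples. This independence is exactly what Theorem \ref{Thm-main-matrix} provides, so there is no real obstacle here. Note also that the lower bound gives the equality $\Tr((YY^*)^p)=d\,\|Y\|^{2p}$ for the optimal tuple, i.e.\ all singular values of $Y$ equal $N^{M}$. This is consistent with, and in fact forced by, the exact moment formula.
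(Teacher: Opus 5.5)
Your proof is correct and is essentially the paper's argument. The upper bound comes from $\|Y\|^{2p}\le \Tr((YY^*)^p)$ with $p\to\infty$, and the lower bound comes from $\Tr((YY^*)^p)\le N^{\sum_j (m-D(\sigma_j))}\|Y\|^{2p}$ applied to the maximizer of Theorem \ref{Thm-main-matrix}. As you yourself note, the lower bound already follows at a single $p$ (e.g.\ $p=1$), so the $p$-independence of the maximizer is not really the delicate point you describe.
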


\section{Upper bound in the 2 legs case} \label{sec:upper}

In this section, we establish the upper bound for the partial trace \eqref{tr-sigma-tau}.

\subsection{Upper bound for partial trace}

\begin{sloppypar}
For any permutations $\sigma_1,\sigma_2 \in \cP([m])$, we consider the corresponding graph $G_{\sigma_1,\sigma_2} (A_1,\ldots,A_m)$ with rectangles $A_1,\ldots,A_m$ defined in Section \ref{sec:graph-2 leg}.
\end{sloppypar}

We call the partial graph $G'_{\sigma_1,\sigma_2} (A_1,\ldots,A_m)$ of $G_{\sigma_1,\sigma_2} (A_1,\ldots,A_m)$ a \emph{simple} partial graph if for all possibilities of the blue directed edges in all the rectangles of $G'_{\sigma_1,\sigma_2} (A_1,\ldots,A_m)$, there is no directed cycle in $G'_{\sigma_1,\sigma_2} (A_1,\ldots,A_m)$. If $G'_{\sigma_1,\sigma_2} (A_1,\ldots,A_m)$ is a simple partial graph of $G_{\sigma_1,\sigma_2} (A_1,\ldots,A_m)$ with $m$ rectangles, then we call $G'_{\sigma_1,\sigma_2} (A_1,\ldots,A_m)$ a \emph{full simple} partial graph of $G_{\sigma_1,\sigma_2} (A_1,\ldots,A_m)$.
We also consider the complement partial graph $G''_{\sigma_1,\sigma_2} (A_1,\ldots,A_m)$ of the simple partial graph $G'_{\sigma_1,\sigma_2} (A_1,\ldots,A_m)$ in $G_{\sigma_1,\sigma_2} (A_1,\ldots,A_m)$. Then $G''_{\sigma_1,\sigma_2} (A_1,\ldots,A_m)$ is a graph that consists of only green directed edges and red directed edges, if $G'_{\sigma_1,\sigma_2} (A_1,\ldots,A_m)$ is a full simple partial graph of $G_{\sigma_1,\sigma_2} (A_1,\ldots,A_m)$.
In this case, we denote by $R_{G'_{\sigma_1,\sigma_2}}$ the number of edges in the complement partial graph $G''_{\sigma_1,\sigma_2} (A_1,\ldots,A_m)$.

The following result is the upper bound for the partial trace \eqref{tr-sigma-tau}, and is the main result in this section.

\begin{theorem} \label{Thm-upper}
For any permutations $\sigma_1,\sigma_2 \in \cP([m])$, for any $N \in \bN$, we have
\begin{align*}
    \max_{A_1,\ldots,A_m} \left| \left( \Tr_{\sigma_1} \otimes \Tr_{\sigma_2} \right) \left( A_1, \ldots, A_m \right) \right|
    \le \min_{G'_{\sigma_1,\sigma_2}} N^{R_{G'_{\sigma_1,\sigma_2}}}.
\end{align*}
Here, $\max_{A_1,\ldots,A_m}$ is taken over all unitary matrices $A_1,\ldots,A_m \in M_N (\bC) \otimes M_N(\bC)$, and $\min_{G'_{\sigma_1,\sigma_2}}$ is taken over all full simple partial graphs $G'_{\sigma_1,\sigma_2}$ of $G_{\sigma_1,\sigma_2}$.
\end{theorem}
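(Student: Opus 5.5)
Fix a full simple partial graph $G'_{\sigma_1,\sigma_2}$; we show $|(\Tr_{\sigma_1}\otimes\Tr_{\sigma_2})(A_1,\ldots,A_m)|\le N^{R_{G'}}$ for unitary $A_i$ and then take the minimum over $G'$. The strategy is to \emph{cut} the edges of the complement $G''$. Each green or red edge represents a contraction of one index over $[N]$, i.e.\ a sum $\sum_{x=1}^N$. Write this contraction as $\sum_x \langle\cdot, e_x\rangle\langle e_x,\cdot\rangle$, which amounts to inserting the identity $\sum_x |e_x\rangle\langle e_x|$ on that leg and splitting it. Doing this for every edge of $G''$ writes the scalar as a sum of $N^{R_{G'}}$ terms, one for each assignment of basis indices $x\in[N]^{E(G'')}$. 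Each term is the evaluation of the tensor network $G'$ with all its open vertices capped by unit basis vectors $e_x$ or dual vectors $e_x^*$. By the triangle inequality it suffices to bound each such term by $1$ in absolute value.

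The key step is to show that a capped network coming from a simple full partial graph has modulus at most $1$ when all rectangles are unitaries, or more generally contractions. Since $G'$ has no directed cycle for any choice of blue edges, I will argue by induction on the number of rectangles that the underlying directed structure admits a topological ordering of the rectangles compatible with composition as linear maps. The aim is to realize the network as $\langle u, T v\rangle$, where $T$ is a product of operators of the form $A_i\otimes I$ (suitably permuted legs) acting on a tensor product of copies of $\bC^N$, and $u,v$ are product unit vectors built from the caps. Each factor then has norm $\le 1$, and the bound follows.

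To obtain the ordering, note that acyclicity for \emph{every} blue pairing in particular forbids any rectangle from having a $G'$-edge from itself to itself. It also forbids a directed closed walk of rectangles through $G'$ edges, since given such a walk one can choose blue edges routing each in-vertex used to the next out-vertex used, producing a directed cycle. Hence the quotient digraph on rectangles is acyclic, so some rectangle $A_{i_0}$ has no incoming $G'$-edge: both of its in-vertices are capped by basis vectors. Starting from the ``wires'' given by these open in-vertices, we process rectangles in topological order. At each stage the current state is a vector in a tensor product of open wires, and applying $A_i$ on the two wires feeding into it, tensored with the identity elsewhere, is a contraction. New open inputs of later rectangles are capped by $e_x$, which means tensoring with a unit vector. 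At the end, the remaining open outputs are paired with $e_x^*$.

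I expect the main obstacle to be the bookkeeping in this last step. One must check that the composition is legitimate, namely that each wire is consumed exactly once and in order. One must also identify precisely which condition on $G'$ guarantees that the quotient digraph on rectangles is acyclic. If ``simple'' turns out to be weaker than needed for this, I will instead argue directly on the network by peeling off a source rectangle and inducting on $m$, using $\|A_{i_0}\|\le1$ and Cauchy--Schwarz at each removal.
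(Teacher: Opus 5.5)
Your proof is correct, but it reaches $N^{R_{G'}}$ through a different inequality than the paper.

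\textbf{The paper's route.} The paper writes the trace as $\langle G', G''\rangle$ and applies Cauchy--Schwarz. It uses $\|G''\|_2^2=N^{R_{G'}}$, since $G''$ consists only of wires. It then computes $\|G'\|_2^2=N^{R_{G'}}$ \emph{exactly} in Lemma \ref{Lem-inner product}. That lemma is an induction that peels off a rectangle with no in-edges (Lemma \ref{Lem-simple}) and uses $A_i^*A_i=I$; this is essentially your fallback plan.

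\textbf{Your route.} You expand the cut wires in a basis and apply the triangle inequality over the $N^{R_{G'}}$ terms. You then bound each capped evaluation by $1$ using submultiplicativity of the operator norm along a topological order.

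\textbf{Acyclicity.} The structural input is the same in both proofs, and the worry in your last paragraph is unfounded. You have already shown that simplicity forces the rectangle digraph to be acyclic: take a shortest closed walk, so the rectangles on it are distinct and the routing blue edges cannot clash. The converse also holds, since any directed cycle using blue edges alternates coloured and blue edges and so projects to a closed walk of rectangles.

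\textbf{Orientation.} This is the only bookkeeping caveat. With the convention $\Tr(A_1A_2\cdots)$, an edge $A_i\to A_{\sigma(i)}$ joins the column index of $A_i$ to the row index of $A_{\sigma(i)}$. So composition as linear maps runs against the edges. Either process the rectangles in reverse topological order, or replace each $A_i$ by its transpose, which has the same norm.

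\textbf{Common object.} Both proofs are really bounding the same quantity. Feed all open legs on the input side in at once and process the rectangles in topological order. This assembles a single unitary $V$ on $(\bC^N)^{\otimes R_{G'}}$ with $(\Tr_{\sigma_1}\otimes\Tr_{\sigma_2})(A_1,\ldots,A_m)=\Tr V$, after identifying the two ends of each cut edge. The paper bounds $|\Tr V|\le\|V\|_2\|I\|_2$, while you bound $|\Tr V|\le\sum_x|\langle e_x,Ve_x\rangle|$.

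\textbf{What each buys.} Your argument uses only $\|A_i\|\le1$. It therefore gives the upper bound of Corollary \ref{Coro-main-2 leg} directly, with no convex-hull-of-unitaries step and no case analysis. The paper's argument gives the exact identity $\|G'\|_2^2=N^{R_{G'}}$ for unitary entries, at the price of needing unitarity.
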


\subsection{Proof of Theorem \ref{Thm-upper}}

We can interpret the graph $G_{\sigma_1,\sigma_2} (A_1,\ldots,A_m)$ of the partial trace \eqref{tr-sigma-tau} as the inner product of a partial graph $G'_{\sigma_1,\sigma_2} (A_1,\ldots,A_m)$ of $G_{\sigma_1,\sigma_2} (A_1,\ldots,A_m)$ and its complement partial graph $G''_{\sigma_1,\sigma_2} (A_1,\ldots,A_m)$. We take the graph in Figure \ref{figure-example'} as an example; it can be interpreted as the inner product of two partial graphs given in Figure \ref{figure-inner product'}. 

For example, if $G_{\sigma_1,\sigma_2} (A_1,\ldots,A_m)$ is a number, i.e., if $\sigma_1,\sigma_2$ are permutations, then we have
$G_{\sigma_1,\sigma_2} (A_1,\ldots,A_m)=\Tr (G'_{\sigma_1,\sigma_2} (A_1,\ldots,A_m)\cdot G''_{\sigma_1,\sigma_2} (A_1,\ldots,A_m))$.
If $\sigma_1,\sigma_2$ are not permutations but partial permutations, $G_{\sigma_1,\sigma_2} (A_1,\ldots,A_m)$ is no longer a number, but a matrix. In this case too, a counterpart statement holds, however, the trace (or scalar product) has to be replaced by a partial trace. 

\begin{figure}[ht]
    \centering
    \begin{subfigure}[t]{0.48\textwidth}
        \centering
        \includegraphics[scale=0.5]{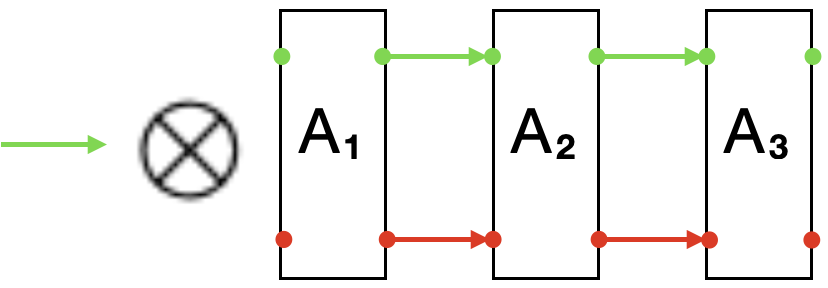}
    \end{subfigure}
    \begin{subfigure}[t]{0.48\textwidth}
        \centering
        \includegraphics[scale=0.5]{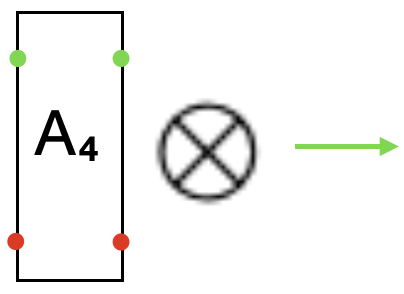}
    \end{subfigure}
    \caption{Inner product of Figure \ref{figure-example'}}
    \label{figure-inner product'}
\end{figure}

Next, we introduce the following properties of simple partial graphs, which play a key role in computing the inner product.

\begin{lemma} \label{Lem-simple-subgraph}
For any $m \in \bN$, for any permutations $\sigma_1,\sigma_2 \in \cP([m])$, we consider the graph $G_{\sigma_1,\sigma_2}$. Let $H$ be a simple partial graph of the graph $G_{\sigma_1,\sigma_2}$. For any partial graph $H'$ of $H$, it is a simple partial graph of the graph $G_{\sigma_1,\sigma_2}$.
\end{lemma}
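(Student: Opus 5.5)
The plan is to argue by contraposition: a directed cycle in $H'$ for some choice of blue edges in $H'$ will be lifted to a directed cycle in $H$ for a suitable choice of blue edges in $H$. This contradicts the simplicity of $H$. First, note that a partial graph of a partial graph of $G_{\sigma_1,\sigma_2}$ is again a partial graph of $G_{\sigma_1,\sigma_2}$. Indeed, $H'$ is obtained from $G_{\sigma_1,\sigma_2}$ by removing the edges and rectangles removed to get $H$, and then some further edges and rectangles. So the only content of the lemma is the absence of directed cycles.

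Suppose, for contradiction, that there is a choice $\beta'$ of blue directed edges in the rectangles of $H'$ for which $H'$ contains a directed cycle $C$. Every rectangle of $H'$ is also a rectangle of $H$, and it carries the same four vertices (green/red, in/out). Indeed, a rectangle's vertices are determined by the rectangle itself, not by which incident edges survive. We extend $\beta'$ to a choice $\beta$ of blue edges in $H$ as follows. On rectangles lying in both $H'$ and $H$, keep $\beta'$. On rectangles of $H$ not in $H'$, choose either of the two admissible pairings arbitrarily.

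Every edge of $C$ is then an edge of $H$ equipped with $\beta$. Its green and red edges belong to $H' \subseteq H$, and its blue edges are blue edges of $\beta$ inside rectangles of $H'$, hence of $H$. Orientations are unchanged, so $C$ is a directed cycle in $H$ with blue edges $\beta$. This contradicts the assumption that $H$ is simple, so $H'$ is simple.

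The only point needing care is the convention for vertices of rectangles whose incident colored edges were removed. We must check that such vertices still exist in the partial graph, so that blue pairings on a rectangle of $H'$ are literally blue pairings on the same rectangle in $H$. If instead the convention were that a rectangle in a partial graph only keeps vertices incident to surviving edges, the same argument still works. In that case any blue pairing of the surviving vertices on a rectangle of $H'$ extends to a full blue pairing of that rectangle in $H$, since a partial matching of in-vertices to out-vertices always extends to a perfect one. The cycle $C$ then again lies in $H$. This extension step is the main, though mild, obstacle.
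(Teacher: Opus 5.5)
Your proposal is correct and is the argument the paper has in mind. The paper omits the proof, saying the lemma follows directly from the definition; your observation is exactly that content: a directed cycle in $H'$ under some blue pairing persists in $H$ once the pairing is extended arbitrarily to the remaining rectangles, which contradicts the simplicity of $H$.
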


The proof of this lemma is directly from the definition of simple partial graphs and is omitted.

\begin{lemma} \label{Lem-simple}
For any $m \in \bN$, for any permutations $\sigma_1,\sigma_2 \in \cP([m])$, we consider the graph $G_{\sigma_1,\sigma_2}$. Let $H$ be a simple partial graph of the graph $G_{\sigma_1,\sigma_2}$. Then in $H$, there exists a rectangle $A_i$, such that either $A_i$ does not have out-edges or $A_i$ does not have in-edges.
\end{lemma}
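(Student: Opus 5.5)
We prove the contrapositive: if every rectangle of $H$ has at least one in-edge and at least one out-edge, then some choice of blue directed edges creates a directed cycle in $H$, so $H$ is not simple. Here "in-edge" and "out-edge" of a rectangle $A_i$ in $H$ mean green or red directed edges of $H$ ending at, respectively starting from, a vertex of $A_i$. We may assume $H$ has at least one rectangle. If $H$ has no rectangles the statement is vacuous, and edges of a partial graph attach to rectangles of $H$.

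The cycle is built by a walk. Start at any rectangle $A_{i_0}$ of $H$ and pick an out-edge $e_0$ of $A_{i_0}$. Its endpoint lies on some rectangle $A_{i_1}$, at an in-vertex of $A_{i_1}$. By hypothesis $A_{i_1}$ has an out-edge $e_1$ in $H$, starting at an out-vertex of $A_{i_1}$. Continuing this way gives an infinite sequence of rectangles $A_{i_0}, A_{i_1}, \ldots$ and edges $e_0, e_1, \ldots$. In the sequence, $e_t$ enters $A_{i_{t+1}}$ at an in-vertex and $e_{t+1}$ leaves $A_{i_{t+1}}$ from an out-vertex. Since there are finitely many rectangles, some rectangle repeats. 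Take a minimal repetition $i_s = i_t$ with $s<t$ and $i_s,\ldots,i_{t-1}$ pairwise distinct.

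Next we choose blue edges to close the loop. In each rectangle $A_{i_r}$ with $s< r\le t$ (the rectangle $A_{i_t}=A_{i_s}$ counts once), the walk enters at one in-vertex (the head of $e_{r-1}$) and leaves at one out-vertex (the tail of $e_r$, where $e_t$ is replaced by $e_s$ at the final rectangle). We put a blue directed edge from that in-vertex to that out-vertex. This is allowed because each rectangle is visited only once in the minimal loop, so we prescribe exactly one blue edge per rectangle. The remaining in-vertex and out-vertex of the rectangle are then paired by the second blue edge, and all other rectangles of $H$ receive blue edges arbitrarily. In the case $k=2$ any single pairing of one in-vertex with one out-vertex extends to a valid pair of disjoint blue edges; the same holds for general $k$ by matching the leftover vertices arbitrarily.

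The resulting edge sequence $e_s,\text{blue},e_{s+1},\text{blue},\ldots,e_{t-1},\text{blue}$ returns to $A_{i_s}$ and closes up. It is therefore a directed cycle in $H$ with these blue edges, contradicting simplicity.

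The only subtle point is ensuring the blue edges prescribed along the loop are compatible, i.e.\ that no rectangle needs two prescribed blue edges. Choosing a minimal repeat handles this, since the rectangles $A_{i_s},\ldots,A_{i_{t-1}}$ are distinct. A loop of length one is also fine: an edge from $A_i$ to itself, as when $\sigma_j(i)=i$, closes a cycle with a single blue edge.
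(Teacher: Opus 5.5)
Your proof is correct and follows essentially the same route as the paper's: walk along out-edges until a rectangle repeats, then place blue edges along the resulting closed walk to produce a directed cycle, contradicting simplicity. Your version is slightly more careful in two respects. First, you prescribe blue edges only on the minimal loop, so the repeated rectangle gets a single consistent blue edge. Second, you state explicitly the assumption the paper also uses implicitly: that $H$ has at least one rectangle and that every edge of $H$ ends on rectangles of $H$. One small correction: the empty case is excluded by this assumption rather than being vacuous, since the conclusion asserts that a rectangle exists.
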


\begin{proof}
We prove the lemma by contradiction. Assume that all rectangles in $H$ have in-edges and out-edges. We will connect the blue directed edges in the rectangles to construct a directed cycle in the following way.

If there is a loop in $H$, then we connect the in-vertex and the out-vertex of this out-edge with a blue directed edge to form a directed cycle. In the following, we only consider the case where there is no loop in $H$.

We can start with a rectangle $A_{i_1}$ for $i_1 \in [m]$. By assumption, the rectangle $A_{i_1}$ has an out-edge. We denote by $A_{i_2}$ the successor of $A_{i_1}$ with respect to this out-edge, where $i_2 \in [m]$ and $i_2 \not= i_1$.
Since the rectangle $A_{i_2}$ also has an out-edge, we denote by $A_{i_3}$ the successor of $A_{i_2}$ with respect to this out-edge, where $i_3 \in [m]$ and $i_3 \not= i_2$. We connect the in-vertex of $A_{i_2}$ of the in-edge from $A_{i_1}$ to $A_{i_2}$ with the out-vertex of $A_{i_2}$ of the out-edge from $A_{i_2}$ to $A_{i_3}$.
We continue this procedure to obtain a sequence of indices $i_1 \not= i_2 \not= \ldots$, such that there is a directed edge from rectangle $A_{i_{r-1}}$ to rectangle $A_{i_r}$, and a directed edge from rectangle $A_{i_r}$ to rectangle $A_{i_{r+1}}$, and a blue directed edge in rectangle $A_{i_r}$ connects the in-vertex of this in-edge with the out-vertex of this out-edge. We provide an example of the blue directed edges in Figure \ref{figure-Lemma-cycle-blue}.

\begin{figure}[ht]
    \centering
    \includegraphics[scale=0.5]{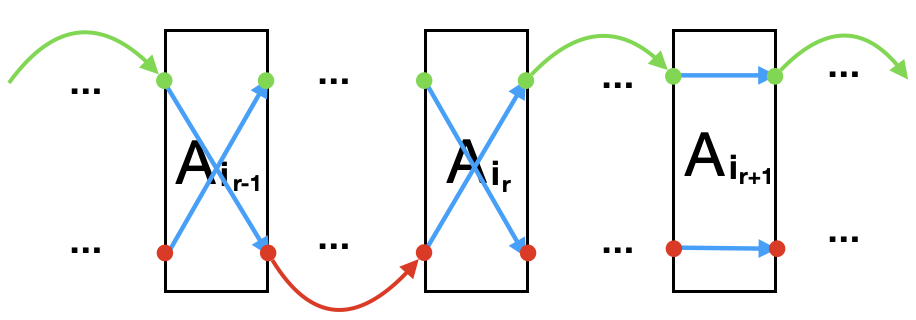}
    \caption{Blue directed edges for cycles}
    \label{figure-Lemma-cycle-blue}
\end{figure}

As the number of rectangles in the graph $H$ is finite, the sequence of indices of the directed path $i_1 \not= i_2 \not= \ldots$ contains only finitely many different numbers. Hence, there exist integers $t < s$, such that $i_t=i_s$, and $i_t, i_{t+1}, \ldots, i_{s-1}$ are distinct. Then we have a directed cycle $A_{i_t} \to A_{i_{t+1}} \to \ldots \to A_{i_s} = A_{i_t}$.

For the simple partial graph of $H$, the existence of directed cycles under our choice of blue directed edges is a contradiction. This concludes the proof.
\end{proof}

In order to establish the upper bound, we need the following lemmas to compute the norm of the simple partial graph.

\begin{lemma} \label{Lem-inner product}
For any $m \in \bN$, for any permutations $\sigma_1,\sigma_2 \in \cP([m])$, we consider the graph $G_{\sigma_1,\sigma_2}$. Let $H$ be a simple partial graph of graph $G_{\sigma_1,\sigma_2}$. Let $k$ be the number of rectangles in the graph $H$, $r$ be the number of green directed edges in $H$, and $s$ be the number of red directed edges in $H$. Then we have
\begin{align*}
    \|H\|_2^2 = N^{2k-r-s}.
\end{align*}
\end{lemma}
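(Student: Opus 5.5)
The plan is to argue by induction on the number $k$ of rectangles in $H$. The engine is Lemma \ref{Lem-simple}, which lets us peel off one rectangle at a time, combined with the unitary invariance of the Hilbert--Schmidt norm. Throughout, I view a partial graph $H$ as the tensor obtained by contracting the indices joined by its green and red directed edges; its uncontracted (open) in-vertices serve as input indices and its open out-vertices as output indices, each of dimension $N$. Then $\|H\|_2^2$ is the sum of the squared moduli of all entries of this tensor, so it does not depend on how the open indices are grouped into rows and columns. I would also check the dimension count: $k$ rectangles carry $4k$ vertices, and the $r+s$ edges consume $2(r+s)$ of them, leaving $4k-2(r+s)$ open indices. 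The target $N^{2k-r-s}$ is exactly $N$ raised to half that number, as it is for a tensor product of unitaries and identities; this is consistent with the claim.

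The base case $k=0$ is the empty graph, which is the scalar $1$, so its norm is $N^0$. For the inductive step, Lemma \ref{Lem-simple} gives a rectangle $A_i$ in $H$ with no out-edges or with no in-edges; suppose first that it has no out-edges. Such an $A_i$ carries no loop, since a loop would be an out-edge. Let $e\in\{0,1,2\}$ be the number of in-edges of $A_i$ in $H$; these edges start at other rectangles. Let $H_0$ be the partial graph obtained by deleting $A_i$ together with these $e$ edges. The tails of the deleted edges become open out-vertices of $H_0$. By Lemma \ref{Lem-simple-subgraph}, $H_0$ is simple; it has $k-1$ rectangles and $r+s-e$ edges.

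The key identity is the factorization
\[
H=(A_i\otimes I)\,\bigl(H_0\otimes I_{N^{2-e}}\bigr),
\]
where $I_{N^{2-e}}$ carries the $2-e$ open in-vertices of $A_i$ straight through as identity wires. Here $A_i$ acts on the two in-indices of the rectangle $A_i$, namely the $e$ outputs of $H_0$ feeding it together with the $2-e$ identity wires. The identity $I$ tensored with $A_i$ acts on all remaining open outputs of $H_0$. Both out-vertices of $A_i$ are open, so they become open outputs of $H$.

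Since $A_i\otimes I$ is unitary, it preserves the Hilbert--Schmidt norm. Also $\|X\otimes I_d\|_2^2=d\|X\|_2^2$. Combining these with the induction hypothesis gives
\[
\|H\|_2^2=N^{2-e}\|H_0\|_2^2=N^{2-e}N^{2(k-1)-(r+s-e)}=N^{2k-r-s}.
\]
The case where $A_i$ has no in-edges is symmetric: there $H=(H_0\otimes I)(A_i\otimes I)$, with $A_i$ applied on the input side, and the same computation goes through with $e$ the number of out-edges.

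The main obstacle I expect is making the factorization rigorous with index bookkeeping, rather than any genuine analytic difficulty. One must verify that the contraction pattern of $H$ is reproduced exactly by composing $H_0$ with $A_i$: each in-edge of $A_i$ is an index contraction between an output of $H_0$ and an input of $A_i$, and nothing else links $A_i$ to the rest of the graph. One must also handle the color of each leg correctly, i.e.\ which tensor factor of $M_N(\bC)\otimes M_N(\bC)$ each green or red in-vertex belongs to. This is routine once written with explicit indices, or graphically. The essential point is that simplicity guarantees, via Lemma \ref{Lem-simple}, a rectangle that touches the rest of the graph only on one side, so that it acts as a one-sided unitary multiplier.
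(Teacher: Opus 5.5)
Your proposal is correct and follows the paper's proof. Both argue by induction on the number of rectangles, use Lemma \ref{Lem-simple} to find a rectangle attached to the rest of $H$ on one side only, remove it together with its $e$ edges at a cost of exactly $N^{2-e}$ via unitarity of $A_i$, and invoke Lemma \ref{Lem-simple-subgraph} to apply the induction hypothesis. The differences are cosmetic: you start at $k=0$ rather than $k=1$, you phrase the removal through the factorization $H=(A_i\otimes I)(H_0\otimes I_{N^{2-e}})$ and unitary invariance of $\|\cdot\|_2$ instead of the diagrammatic cancellation $A_1^*A_1=I_N\otimes I_N$, and you treat the ``no out-edges'' case as primary where the paper takes ``no in-edges''.
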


\begin{proof}
We prove this by induction on $k$.

We start with the case $k=1$. Note that a simple partial graph with only one rectangle must be a rectangle without edges. Thus, the graph of $H$ consists of one rectangle with no edges. In this case, we have $k=1, r=s=0$. See Figure \ref{figure-k=1-1}. The quantity $\|H\|_2^2$ is represented in Figure \ref{figure-k=1-2}. Noting that $A_1^*A_1 = I_N \otimes I_N$, we have $\|H\|_2^2 = \Tr (A_1A_1^*) =  N^2$.
\begin{figure}[ht]
    \centering
    \begin{subfigure}{0.48\textwidth}
        \centering
        \includegraphics[scale=0.6]{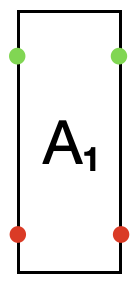}
        \caption{Graph of $H$}
        \label{figure-k=1-1}
    \end{subfigure}
    \begin{subfigure}{0.48\textwidth}
        \centering
        \includegraphics[scale=0.5]{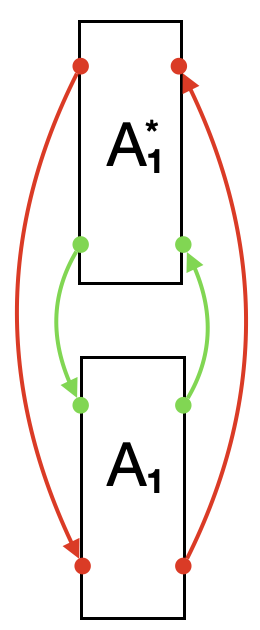}
        \caption{Graph interpretation of $\|H\|_2^2$}
        \label{figure-k=1-2}
    \end{subfigure}
    \caption{Case $k=1$}
    \label{figure-k=1}
\end{figure}

Next, we assume that the conclusion holds for the case $k-1$. That is, for any $1\le r,s \le k-1$, for any simple graph $H'$ that has $k-1$ rectangles with $r$ green directed edges and $s$ red directed edges, the conclusion holds.
Now we consider the graph $H$ with $k$ rectangles $A_1,\ldots,A_k$. Noting that $H$ is a simple partial graph of $G_{\sigma_1,\sigma_2}$, by Lemma \ref{Lem-simple}, there exists a rectangle which either does not have out-edges or does not have in-edges. Without loss of generality, we assume that the rectangle $A_1$ does not have any in-edges. We consider the number of out-edges of $A_1$. We have the following four cases.

\bigskip
\noindent {\bf Case 1.} There is neither a green out-edge nor a red out-edge on rectangle $A_1$; therefore, rectangle $A_1$ does not have any edges. See Figure \ref{figure-case1-1} for the graph $H$ and Figure \ref{figure-case1-2} for $\|H\|_2^2$. We introduce a partial graph $H'$ which is obtained from $H$ by removing the rectangle $A_1$. See Figure \ref{figure-case1-4} for the graph $H'$ and Figure \ref{figure-case1-3} for $\|H'\|_2^2$.

\begin{figure}[ht]
    \centering
    \begin{subfigure}{0.25\textwidth}
        \centering
        \includegraphics[scale=0.35]{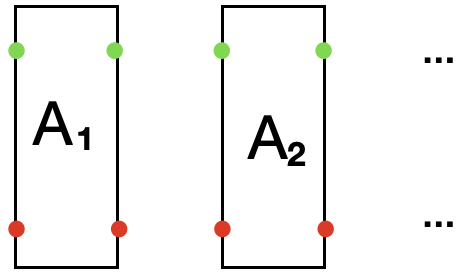}
        \caption{Graph of $H$}
        \label{figure-case1-1}
    \end{subfigure}
    \hspace{-0.5cm}
    \begin{subfigure}{0.25\textwidth}
        \centering
        \includegraphics[scale=0.35]{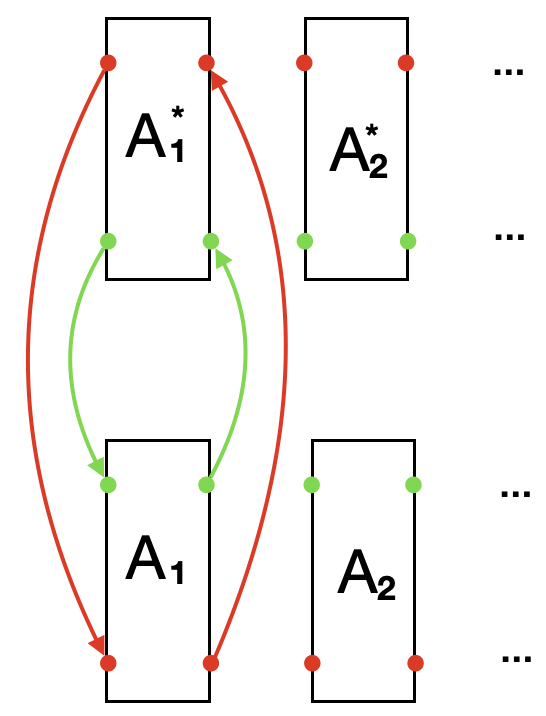}
        \caption{$\|H\|_2^2$}
        \label{figure-case1-2}
    \end{subfigure}
    \hspace{-0.1cm}
    \begin{subfigure}{0.25\textwidth}
        \centering
        \includegraphics[scale=0.35]{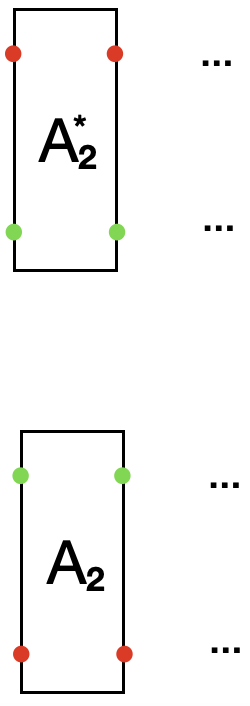}
        \caption{$\|H'\|_2^2$}
        \label{figure-case1-3}
    \end{subfigure}
    \hspace{-1cm}
    \begin{subfigure}{0.25\textwidth}
        \centering
        \includegraphics[scale=0.35]{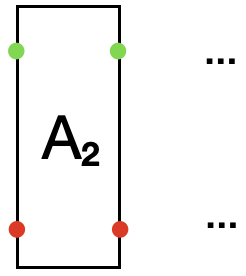}
        \caption{Graph of $H'$}
        \label{figure-case1-4}
    \end{subfigure}   
    \caption{Case 1}
    \label{figure-case1}
\end{figure}

Noting that $A_1^*A_1 = I_N \otimes I_N$, we have
\begin{align} \label{eq-H-norm-case1}
    \|H\|_2^2 = \Tr \left( A_1^* A_1 \right) \|H'\|_2^2 = N^2 \|H'\|_2^2.
\end{align}
Moreover, according to Lemma \ref{Lem-simple-subgraph}, $H'$ is also a simple partial graph of $G_{\sigma_1,\sigma_2}$. The number of rectangles on the graph $H'$ is $k-1$, while the numbers of green edges and red edges are $r$ and $s$, respectively. Thus, by the induction hypothesis, we have $\|H'\|_2^2 = N^{2(k-1)-r-s}$. The proof is concluded by substituting this identity into \eqref{eq-H-norm-case1}.

\bigskip

\noindent {\bf Case 2.} There is no green out-edge, but one red out-edge on rectangle $A_1$. We denote by $A_i$ the successor of $A_1$ with respect to this red out-edge. See Figure \ref{figure-case2-1} for the graph $H$, and Figure \ref{figure-case2-2} for $\|H\|_2^2$. We introduce a partial graph $H'$ that is obtained from $H$ by removing the rectangle $A_1$ and the red out-edge associated with $A_1$. See Figure \ref{figure-case2-4} for the graph $H'$ and Figure \ref{figure-case2-3} for $\|H'\|_2^2$.

\begin{figure}[ht]
    \centering
    \begin{subfigure}{0.25\textwidth}
        \centering
        \includegraphics[scale=0.3]{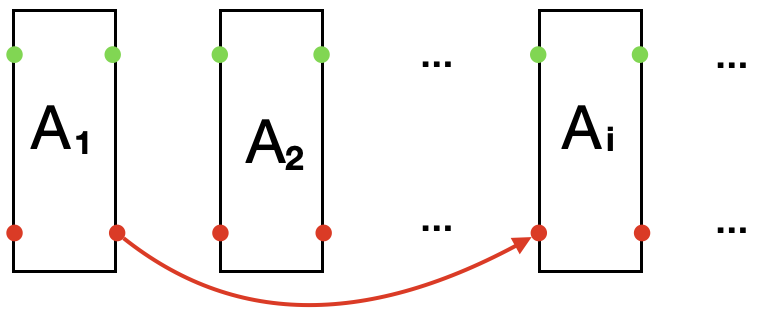}
        \caption{Graph of $H$}
        \label{figure-case2-1}
    \end{subfigure}
    \hspace{-0.1cm}
    \begin{subfigure}{0.25\textwidth}
        \centering
        \includegraphics[scale=0.28]{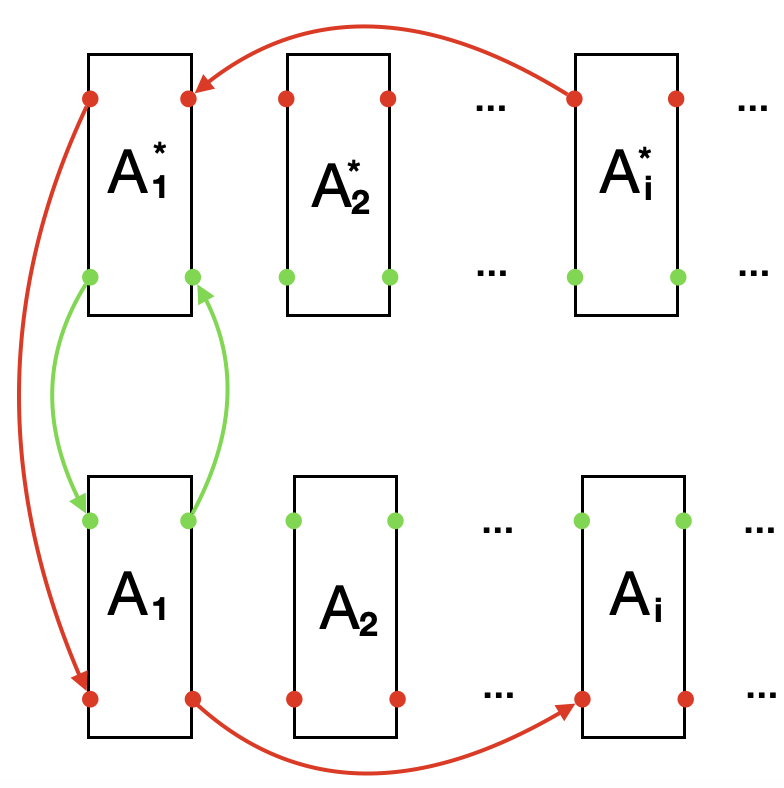}
        \caption{$\|H\|_2^2$}
        \label{figure-case2-2}
    \end{subfigure}
    \begin{subfigure}{0.25\textwidth}
        \centering
        \includegraphics[scale=0.3]{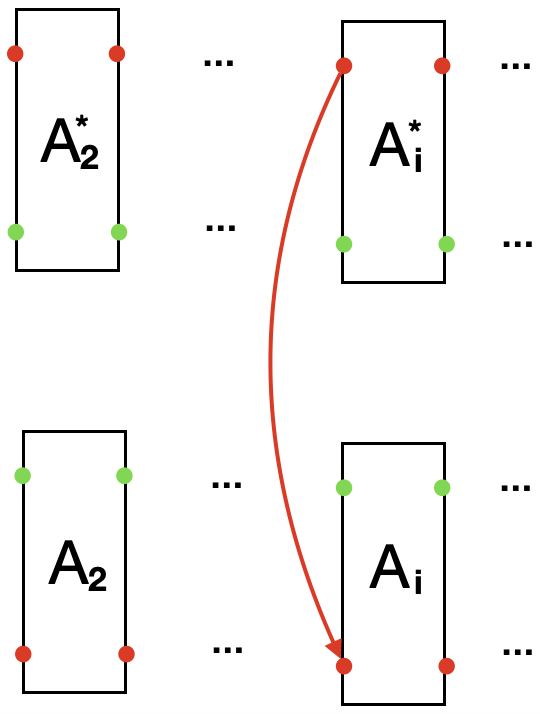}
        \caption{$\|H'\|_2^2$}
        \label{figure-case2-3}
    \end{subfigure}
    \hspace{-1cm}
    \begin{subfigure}{0.25\textwidth}
        \centering
        \includegraphics[scale=0.3]{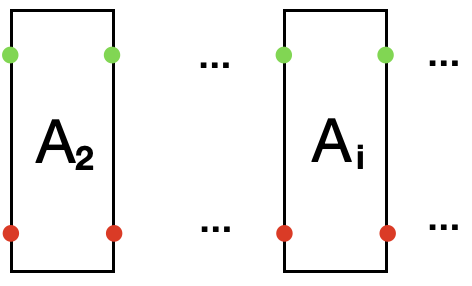}
        \caption{Graph of $H'$}
        \label{figure-case2-4}
    \end{subfigure}   
    \caption{Case 2}
    \label{figure-case2}
\end{figure}

Since $A_1^*A_1 = I_N \otimes I_N$, we have
\begin{align} \label{eq-H-norm-case2}
    \|H\|_2^2 = \Tr \left( I_N \right) \|H'\|_2^2 = N \|H'\|_2^2.
\end{align}
Moreover, according to Lemma \ref{Lem-simple-subgraph}, $H'$ is also a simple partial graph of $G_{\sigma_1,\sigma_2}$. The number of rectangles in the partial graph $H'$ is $k-1$, and the numbers of green edges and red edges are $r$ and $s-1$, respectively. Thus, by the induction hypothesis, we have $\|H'\|_2^2 = N^{2(k-1)-r-(s-1)}$. The proof is concluded by substituting this identity into \eqref{eq-H-norm-case2}.

\bigskip
\noindent {\bf Case 3.} There is one green out-edge but no red out-edge on rectangle $A_1$. This case is similar to Case 2, and details are omitted.

\bigskip
\noindent {\bf Case 4.} There is one green out-edge and one red out-edge on rectangle $A_1$. See Figure \ref{figure-case4-1} for the graph $H$ and Figure \ref{figure-case4-2} for $\|H\|_2^2$. Let $A_i$ and $A_j$ be the successors of $A_1$ with respect to the red out-edge and the green out-edge, respectively. We introduce a partial graph $H'$ that is obtained from $H$ by removing the rectangle $A_1$ along with the associated red out-edge and green out-edge. See Figure \ref{figure-case4-4} for the graph $H'$ and Figure \ref{figure-case4-3} for $\|H'\|_2^2$.

\begin{figure}[ht]
    \begin{subfigure}{0.48\textwidth}
        \centering
        \includegraphics[scale=0.4]{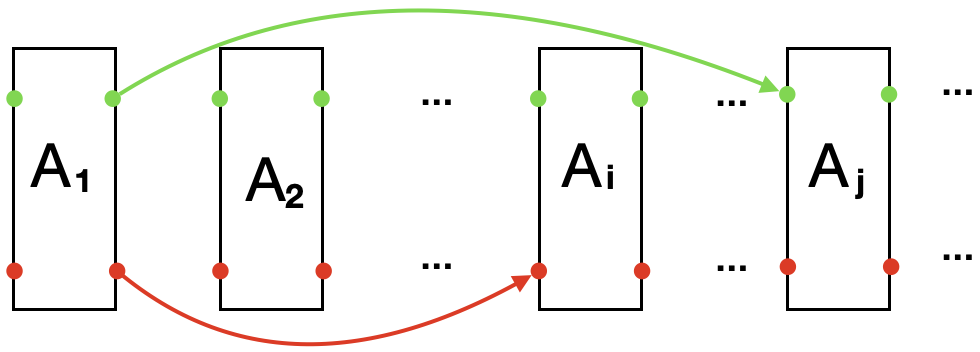}
        \caption{Graph of $H$}
        \label{figure-case4-1}
    \end{subfigure}
    \hfill
    \begin{subfigure}{0.48\textwidth}
        \centering
        \includegraphics[scale=0.4]{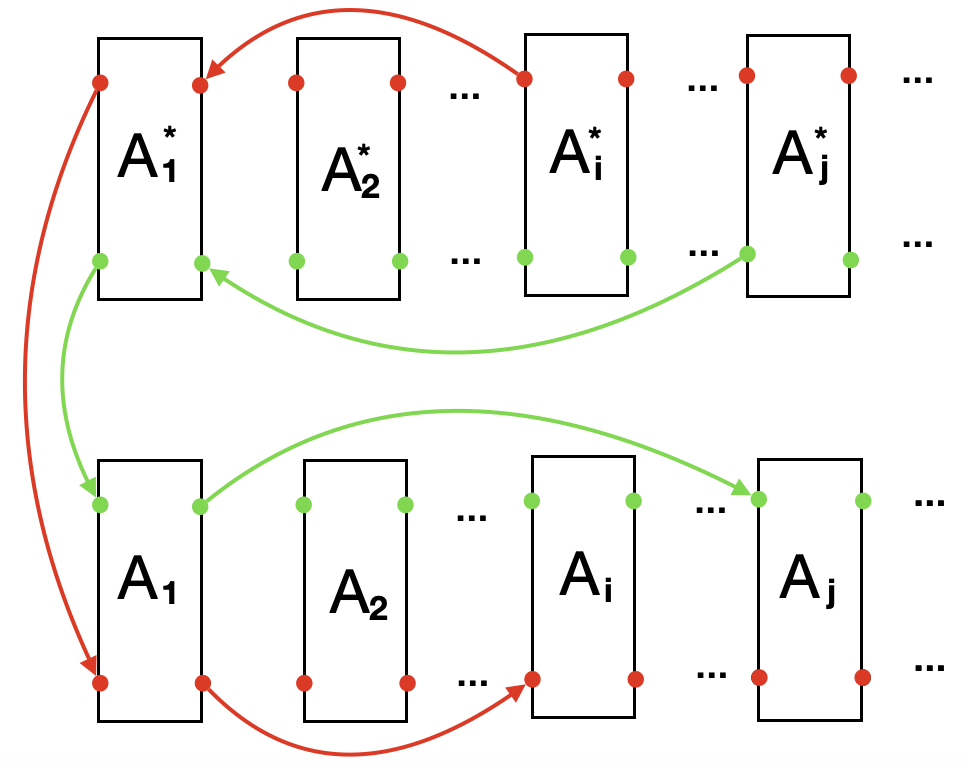}
        \caption{$\|H\|_2^2$}
        \label{figure-case4-2}
    \end{subfigure}
    \\
    \begin{subfigure}{0.48\textwidth}
        \centering
        \includegraphics[scale=0.4]{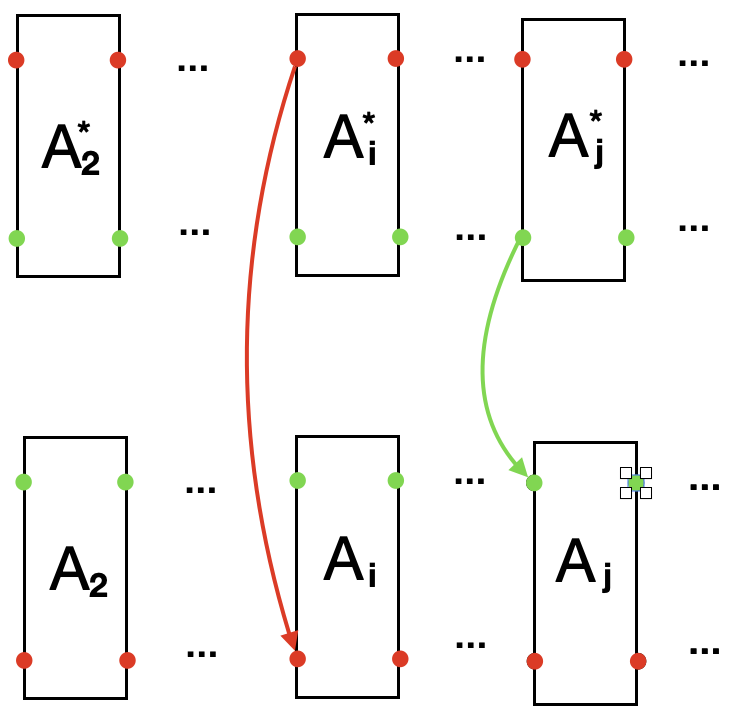}
        \caption{$\|H'\|_2^2$}
        \label{figure-case4-3}
    \end{subfigure}
    \hfill
    \begin{subfigure}{0.48\textwidth}
        \centering
        \includegraphics[scale=0.4]{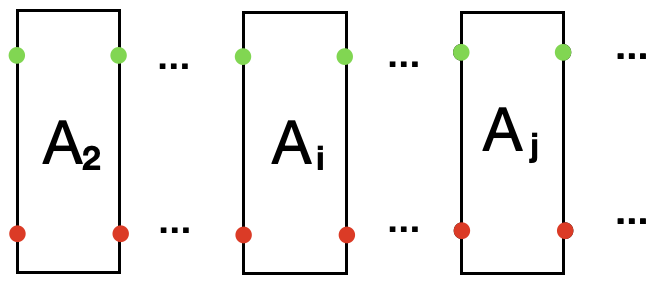}
        \caption{Graph of $H'$}
        \label{figure-case4-4}
    \end{subfigure}   
    \caption{Case 4}
    \label{figure-case4}
\end{figure}

Since $A_1^*A_1 = I_N \otimes I_N$, we have
\begin{align} \label{eq-H-norm-case4}
    \|H\|_2^2 = \|H'\|_2^2.
\end{align}
Moreover, $H'$ is a simple partial graph of the graph $G_{\sigma_1,\sigma_2}$ according to Lemma \ref{Lem-simple-subgraph}. The number of rectangles on the graph $H'$ is $k-1$, and the number of green edges and red edges are $r-1$ and $s-1$, respectively. Thus, by the induction hypothesis, we have $\|H'\|_2^2 = N^{2(k-1)-(r-1)-(s-1)} = N^{2k-r-s}$. Together with the identity \eqref{eq-H-norm-case4}, this concludes the proof.
\end{proof}

Now we are ready to prove Theorem \ref{Thm-upper}.

\begin{proof}(of Theorem \ref{Thm-upper})
We denote by $G_{\sigma_1,\sigma_2}$ the graph corresponding to \eqref{tr-sigma-tau}. Let $G'_{\sigma_1,\sigma_2}$ be a full simple partial graph of $G_{\sigma_1,\sigma_2}$, and $G''_{\sigma_1,\sigma_2}$ be the complement partial graph of $G'_{\sigma_1,\sigma_2}$. Then, by the Cauchy-Schwarz inequality, we have
\begin{align} \label{eq-inner product-1}
    \left| \left( \Tr_{\sigma_1} \otimes \Tr_{\sigma_2} \right) \left( A_1, \ldots, A_m \right) \right|
    = | G_{\sigma_1,\sigma_2} |
    = \left| \left\langle G'_{\sigma_1,\sigma_2}, G''_{\sigma_1,\sigma_2} \right\rangle \right|
    \le \left\|G'_{\sigma_1,\sigma_2} \right\|_2 \left\| G''_{\sigma_1,\sigma_2} \right\|_2.
\end{align}

We first handle the graph $G''_{\sigma_1,\sigma_2}$. Note that $G'_{\sigma_1,\sigma_2}$ is a full simple partial graph in $G_{\sigma_1,\sigma_2}$, $G''_{\sigma_1,\sigma_2}$ is a graph without rectangles, and the total number of green directed edges and red directed edges is $R_{G'_{\sigma_1,\sigma_2}}$. Thus, we have 
\begin{align}\label{eq-norm-G2}
    \left\| G''_{\sigma_1,\sigma_2} \right\|_2^2 = N^{R_{G'_{\sigma_1,\sigma_2}}}.
\end{align}

Next, we turn to the partial graph $G'_{\sigma_1,\sigma_2}$. Note that $G'_{\sigma_1,\sigma_2}$ consists of $m$ rectangles, and the total number of green directed edges and red directed edges is $2m-R_{G'_{\sigma_1,\sigma_2}}$. By Lemma \ref{Lem-inner product}, we have
\begin{align}\label{eq-norm-G1}
    \|G'_{\sigma_1,\sigma_2}\|_2^2
    = N^{2m - \left( 2m-R_{G'_{\sigma_1,\sigma_2}} \right)}
    = N^{R_{G'_{\sigma_1,\sigma_2}}}.
\end{align}

Substituting \eqref{eq-norm-G1} and \eqref{eq-norm-G2} into \eqref{eq-inner product-1}, we obtain
\begin{align*}
    \left| \left( \Tr_{\sigma_1} \otimes \Tr_{\sigma_2} \right) \left( A_1, \ldots, A_m \right) \right|
    \le N^{R_{G'_{\sigma_1,\sigma_2}}}.
\end{align*}

The proof is concluded by taking the maximum on all unitary matrices $A_1,\ldots,A_m \in M_N (\bC) \otimes M_N(\bC)$, and the minimum on all full simple partial graphs $G'_{\sigma_1,\sigma_2}$ of $G_{\sigma_1,\sigma_2}$.
\end{proof}

\section{Lower bound in the 2 legs case} \label{sec:lower}

In this section, we establish the lower bound for the partial trace \eqref{tr-sigma-tau}. 

\subsection{Lower bound for partial trace}

We introduce the following unitary matrix, which plays a key role in the proof. Let $E_{ij}$ be a $N \times N$ matrix with $1$ on the $(i,j)$-entry and $0$ on the other entries. We let
\begin{align*}
    U = \sum_{i,j=1}^N E_{ij} \otimes E_{ji},
\end{align*}
then $U$ is a unitary matrix on $M_N(\bC) \otimes M_N(\bC)$.
It is actually a self-adjoint involution. 

Recall that $M(\sigma_1,\sigma_2)$ is the maximal number of directed cycles in the corresponding graph $G_{\sigma_1,\sigma_2}$, where the maximum is taken over all possibilities of blue directed edges. The following theorem provides a lower bound for the partial trace \eqref{tr-sigma-tau}, and is the main result in this section.

\begin{theorem} \label{Thm-lower}
For $m \in \bN$, for any permutation $\sigma_1,\sigma_2 \in \cP([m])$, we have
\begin{align*}
    \max_{A_1,\ldots,A_m \in \{I_N \otimes I_N, U\}} \left| \left( \Tr_{\sigma_1} \otimes \Tr_{\sigma_2} \right) \left( A_1, \ldots, A_m \right) \right|
    = N^{M(\sigma_1,\sigma_2)}.
\end{align*}
\end{theorem}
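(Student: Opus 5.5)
The plan is to compute the partial trace exactly when every $A_i$ is $I_N\otimes I_N$ or the swap $U$, and then relate the choices to blue edge configurations. Write $U=\sum_{a,b}E_{ab}\otimes E_{ba}$ in index form. The matrix entries of $I_N\otimes I_N$ are $\delta_{a_1 b_1}\delta_{a_2 b_2}$, and those of $U$ are $\delta_{a_1 b_2}\delta_{a_2 b_1}$. Here $(a_1,a_2)$ are the output indices and $(b_1,b_2)$ the input indices, for the green and red legs respectively.

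In the graph $G_{\sigma_1,\sigma_2}$, each delta identifies an in-vertex index with an out-vertex index inside the rectangle. Choosing $A_i=I_N\otimes I_N$ therefore corresponds to the blue edge configuration pairing same colors: green in to green out, and red in to red out. Choosing $A_i=U$ corresponds to the configuration pairing different colors. The green and red edges identify the out-vertex index of $A_i$ with the in-vertex index of $A_{\sigma_j(i)}$. Summing over all indices, each connected component of the resulting graph contributes a free index ranging over $[N]$. Every vertex has in-degree and out-degree one once blue edges are added, so the components are exactly the directed cycles. Hence
\begin{align*}
\left(\Tr_{\sigma_1}\otimes\Tr_{\sigma_2}\right)(A_1,\ldots,A_m)=N^{\#\text{cycles}}
\end{align*}
for the corresponding blue configuration.

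The key step is verifying that orientations are consistent in this correspondence, namely that the deltas from the matrix product and the traces link out-vertices to in-vertices along the green and red edges, and in-vertices to out-vertices along the blue edges. This is careful bookkeeping of $\Tr_{\sigma}$ as a product of traces over the cycles of $\sigma$. It is where I expect most of the care to be needed, though it is not deep.

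Since each rectangle has exactly two possible blue pairings, namely same-color or cross-color, the set $\{I_N\otimes I_N,U\}^m$ is in bijection with all blue configurations. Taking the maximum over these choices therefore gives $\max N^{\#\text{cycles}}=N^{M(\sigma_1,\sigma_2)}$, with equality and all values positive, so the absolute values play no role.
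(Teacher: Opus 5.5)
Your proposal is correct and follows essentially the same route as the paper: the same bijection between $\{I_N\otimes I_N,U\}^m$ and the blue-edge configurations, and the same index expansion showing that the partial trace equals $N^{\#\text{cycles}}$. Your count of connected components, each contributing one free index, is just a more transparent phrasing of the paper's claim that the $2m$ index constraints contain $2m-M$ independent equations.
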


\begin{proof}
We consider the graph $G_{\sigma_1,\sigma_2}$ with an arbitrary way of connecting the blue directed edges in rectangles of $G_{\sigma_1,\sigma_2}$.
We fix the connection of the blue directed edges, and denote by $M$ the number of directed cycles in $G_{\sigma_1,\sigma_2}$.
For $1 \le i \le m$, we set $A_i = U$ if the blue directed edges in rectangle $A_i$ connect vertices of different colors, and $A_i = I_N \otimes I_N$ if the blue directed edges in rectangle $A_i$ connect vertices of the same color.
We would like to remark that for each $i \in [m]$, this correspondence between the connection of the blue directed edges of the rectangle $A_i$ in the graph $G_{\sigma_1,\sigma_2}(A_1,\ldots,A_m)$ and the choice of the matrix $A_i$ in $\{I_N \otimes I_N, U\}$ is one to one.
We provide an example in Figure \ref{figure-blue edges-U-I}.
\begin{figure}[ht]
    \begin{subfigure}{0.48\textwidth}
        \centering
        \includegraphics[scale=0.4]{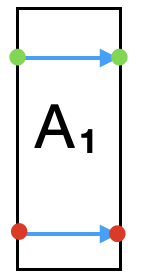}
        \caption{$A_1 = I_N \otimes I_N$}
    \end{subfigure}
    \hfill
    \begin{subfigure}{0.48\textwidth}
        \centering
        \includegraphics[scale=0.4]{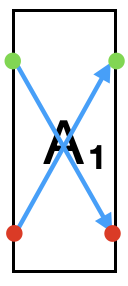}
        \caption{$A_1 = U$}
    \end{subfigure}   
    \caption{Correspondence of blue directed edges and matrix}
    \label{figure-blue edges-U-I}
\end{figure}

We claim that for any connection of blue directed edges in $G_{\sigma_1,\sigma_2}$ and the corresponding choices of $A_i$, it holds that
\begin{align} \label{eq-id-U}
    \left( \Tr_{\sigma_1} \otimes \Tr_{\sigma_2} \right) \left( A_1, \ldots, A_m \right)
    = N^{M}.
\end{align}

With the help of claim \eqref{eq-id-U}, we are able to conclude the proof as follows. On the one hand, for any matrices $A_1, \ldots, A_m \in \{I_N \otimes I_N, U\}$, the claim \eqref{eq-id-U} directly yields 
\begin{align*}
    \left( \Tr_{\sigma_1} \otimes \Tr_{\sigma_2} \right) \left( A_1, \ldots, A_m \right)
    \le N^{M(\sigma_1,\sigma_2)}.
\end{align*}
Thus,
\begin{align} \label{eq-max-1}
    \max_{A_1,\ldots,A_m \in \{I_N \otimes I_N, U\}} \left( \Tr_{\sigma_1} \otimes \Tr_{\sigma_2} \right) \left( A_1, \ldots, A_m \right)
    \le N^{M(\sigma_1,\sigma_2)}.
\end{align}
On the other hand, for a graph $G_{\sigma_1,\sigma_2}$, we connect the blue directed edges in the rectangles of $G_{\sigma_1,\sigma_2}$, so that the number of directed cycles $M$ is exactly $M(\sigma_1,\sigma_2)$. Then we choose the matrices $A_1,\ldots,A_m$ according to the above correspondence. By \eqref{eq-id-U}, we obtain
\begin{align} \label{eq-max-2}
    \left( \Tr_{\sigma_1} \otimes \Tr_{\sigma_2} \right) \left( A_1, \ldots, A_m \right)
    = N^{M(\sigma_1,\sigma_2)}.
\end{align}
Combining \eqref{eq-max-1} and \eqref{eq-max-2}, we get
\begin{align*}
    \max_{A_1,\ldots,A_m \in \{I_N \otimes I_N, U\}} \left| \left( \Tr_{\sigma_1} \otimes \Tr_{\sigma_2} \right) \left( A_1, \ldots, A_m \right) \right|
    = N^{M(\sigma_1,\sigma_2)}.
\end{align*}

Now, it remains to prove the claim \eqref{eq-id-U}.

In order to record how the blue directed edges are connected, we associate a permutation $\pi_j \in \cP([2])$ to each rectangle $A_j$ for $1 \le j \le m$ as follows. If the blue directed edges connect the in-vertices and the out-vertices of the same colors, then $\pi_j = (1)(2)$. If the blue directed edges connect the in-vertices and the out-vertices of different colors, then $\pi_j = (12)$. We provide an example in Figure \ref{figure-claim-1} and Figure \ref{figure-claim-2}.

\begin{figure}[ht]
    \begin{subfigure}{0.49\textwidth}
        \centering
        \includegraphics[scale=0.4]{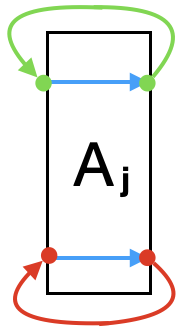}
        \caption{$\pi_j=(1)(2)$}
        \label{figure-claim-1}
    \end{subfigure}
    \begin{subfigure}{0.49\textwidth}
        \centering
        \includegraphics[scale=0.4]{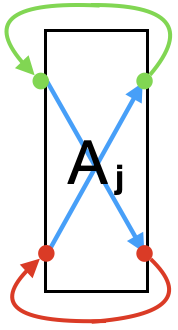}
        \caption{$\pi_j=(12)$}
        \label{figure-claim-2}
    \end{subfigure}
    \caption{Correspondence of blue directed edges of $A_j$ and permutation $\pi_j$}
\end{figure}

With the help of permutations, we can write
\begin{align*}
    \sum_{i_1,i_2=1}^N E_{i_1 i_{\pi(1)}} \otimes E_{i_2 i_{\pi(2)}} =
    \begin{cases}
        U, & \mathrm{if} \ \pi=(12), \\
        I_N \otimes I_N, & \mathrm{if} \ \pi=(1)(2).
    \end{cases}
\end{align*}
Hence, we can express the matrix $A_j$ that corresponds to the blue directed edges in the rectangle $A_j$ in the graph $G_{\sigma_1,\sigma_2}$ by
\begin{align*}
    A_j = \sum_{i_1,i_2=1}^N E_{i_1 i_{\pi_j(1)}} \otimes E_{i_2 i_{\pi_j(2)}}.
\end{align*}

Thus, by the tensor structure, we have
\begin{align} \label{eq-tensor-1}
    & \left( \Tr_{\sigma} \otimes \Tr_{\tau} \right) \left( A_1, \ldots, A_m \right) \nonumber \\
    &= \sum_{i_1^{(1)},i_2^{(1)}, \ldots, i_1^{(m)}, i_2^{(m)}=1}^N \left( \Tr_{\sigma} \otimes \Tr_{\tau} \right) \left( E_{i_1^{(1)} i_{\pi_1(1)}^{(1)}} \otimes E_{i_2^{(1)} i_{\pi_1(2)}^{(1)}}, \ldots, E_{i_1^{(m)} i_{\pi_m(1)}^{(m)}} \otimes E_{i_2^{(m)} i_{\pi_m(2)}^{(m)}} \right) \nonumber \\
    &= \sum_{i_1^{(1)},i_2^{(1)}, \ldots, i_1^{(m)}, i_2^{(m)}=1}^N \Tr_{\sigma} \left( E_{i_1^{(1)} i_{\pi_1(1)}^{(1)}}, \ldots, E_{i_1^{(m)} i_{\pi_m(1)}^{(m)}} \right) \Tr_{\tau} \left( E_{i_2^{(1)} i_{\pi_1(2)}^{(1)}}, \ldots, E_{i_2^{(m)} i_{\pi_m(2)}^{(m)}} \right) \nonumber \\
    &= \sum_{i_1^{(1)},i_2^{(1)}, \ldots, i_1^{(m)}, i_2^{(m)}=1}^N \prod_{j=1}^m 1_{i_{\pi_j(1)}^{(j)} = i_1^{(\sigma(j))}} 1_{i_{\pi_j(2)}^{(j)} = i_2^{(\tau(j))}}.
\end{align}
The sum in \eqref{eq-tensor-1} involves $2m$ variables $i_1^{(1)},i_2^{(1)}, \ldots, i_1^{(m)}, i_2^{(m)}$ independently from 1 to $N$, and there are $2m$ restrictions
\begin{align*}
    i_{\pi_j(1)}^{(j)} = i_1^{(\sigma(j))},
    i_{\pi_j(2)}^{(j)} = i_2^{(\tau(j))},
    \quad \forall 1 \le j \le m.
\end{align*}
Note that the restrictions contain $2m-M$ independent equations. Hence, we have
\begin{align*}
    \left( \Tr_{\sigma} \otimes \Tr_{\tau} \right) \left( A_1, \ldots, A_m \right)
    = N^{M}.
\end{align*}
\end{proof}

\section{Proof of Theorem \ref{Thm-main}} \label{sec:2legs}

In this section, we develop the proof of Theorem \ref{Thm-main} with the help of Theorem \ref{Thm-upper} and Theorem \ref{Thm-lower}.

The following lemma provides a property of the graph with the maximal number of directed cycles, which will be used to study the upper bound.

\begin{lemma} \label{Lem-upper-lower}
For $m \in \bN$, for any permutations $\sigma_1,\sigma_2 \in \cP([m])$, we consider the corresponding graph $G_{\sigma_1,\sigma_2}$. We connect blue directed edges in every rectangle such that the number of directed cycles is $M(\sigma_1,\sigma_2)$. Then for each rectangle $A_i$, the two blue directed edges belong to different cycles.
\end{lemma}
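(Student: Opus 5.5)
The proof goes by contradiction, using a local switching argument. Fix a connection of blue directed edges that attains $M(\sigma_1,\sigma_2)$ directed cycles, and suppose some rectangle $A_i$ has both of its blue directed edges in the same directed cycle $C$. We will re-wire the two blue edges inside $A_i$, leaving every other rectangle unchanged, and show that this produces $M(\sigma_1,\sigma_2)+1$ directed cycles. That contradicts maximality.

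First note the structure of the graph. Every vertex has exactly one incoming and one outgoing edge once the blue edges are included: an in-vertex receives a green or red edge and emits a blue one, and an out-vertex receives a blue edge and emits a green or red one. So $G_{\sigma_1,\sigma_2}$ with blue edges is a disjoint union of directed cycles, i.e.\ a permutation on the set of vertices. Label the vertices of $A_i$ as in-vertices $a_1,a_2$ and out-vertices $b_1,b_2$, and suppose the blue edges are $a_1\to b_1$ and $a_2\to b_2$. If both lie on $C$, then traversing $C$ gives
\[
a_1\to b_1 \rightsquigarrow a_2 \to b_2 \rightsquigarrow a_1,
\]
where the paths $P: b_1\rightsquigarrow a_2$ and $Q: b_2\rightsquigarrow a_1$ are disjoint and avoid the blue edges of $A_i$.

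Now replace the blue edges by $a_1\to b_2$ and $a_2\to b_1$. This is the other admissible pairing in $A_i$, since it still goes from in-vertices to out-vertices and shares no vertices. The new edge set contains the two closed directed walks $a_1\to b_2 \overset{Q}{\rightsquigarrow} a_1$ and $a_2\to b_1\overset{P}{\rightsquigarrow} a_2$. These are vertex-disjoint directed cycles. All cycles of the original configuration other than $C$ are untouched, because they do not pass through $A_i$: each vertex lies on exactly one cycle, and all four vertices of $A_i$ lie on $C$. So the number of directed cycles becomes $M(\sigma_1,\sigma_2)+1$, which exceeds the maximum. This is the contradiction, and it proves the lemma.

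The only point needing care is the degenerate case where $P$ or $Q$ has length zero or consists of a single edge, for instance a loop from $A_i$ to itself with $\sigma_1(i)=i$. The switching argument still applies there: for example, a single green edge from $b_1$ to $a_2$ closes up with the blue edge $a_2\to b_1$ into a cycle of length $2$. I would check explicitly that the switching identity, that swapping the targets of two arcs of one cycle splits it into two, holds for permutations on the vertex set in full generality. This is the standard fact that multiplying a permutation by a transposition of two points in the same cycle increases the cycle count by one, and it covers all these cases uniformly.
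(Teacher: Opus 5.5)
Your proof is correct and follows the paper's argument: assuming both blue edges of $A_i$ lie on one cycle, you swap their endpoints so that each of the two connecting paths $P$ and $Q$ closes up with one new blue edge, which splits the cycle into two and contradicts maximality. Your additional observations make explicit what the paper leaves implicit: the blue-completed graph is a permutation of the vertex set, so all other cycles are untouched, and the swap is multiplication by a transposition. One small correction: $P$ and $Q$ can never have length zero, since $b_1\neq a_2$ and $b_2\neq a_1$.
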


\begin{proof}
We proceed by contradiction.
We assume that in graph $G_{\sigma_1,\sigma_2} (A_1,\ldots,A_m)$, there exists a rectangle $A_i$ such that the two blue directed edges of $A_i$ belong to the same directed cycle. We provide an example in Figure \ref{figure-cycle-blue-1}. In the following, we fix the two blue directed edges.

For each of the two blue directed edges, there is a directed path from the out-vertex of this blue directed edge to the in-vertex of the other blue directed edge. The two directed paths involve the green directed edges, the red directed edges, and the blue directed edges in other rectangles $A_j$ with $j \not= i$. Hence, if we change the two blue directed edges in rectangle $A_i$ by swapping their endpoints, then each blue directed edge in $A_i$, together with one of the directed paths, forms a directed cycle. We provide an example in Figure \ref{figure-cycle-blue-2}. 

\begin{figure}[ht]
    \begin{subfigure}{0.49\textwidth}
        \centering
        \includegraphics[scale=0.5]{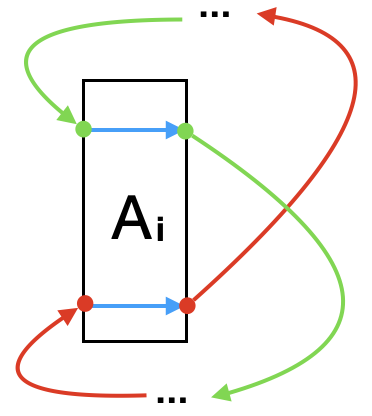}
        \caption{Only one directed cycle on $A_i$}
        \label{figure-cycle-blue-1}
    \end{subfigure}
    \hfill
    \begin{subfigure}{0.49\textwidth}
        \centering
        \includegraphics[scale=0.5]{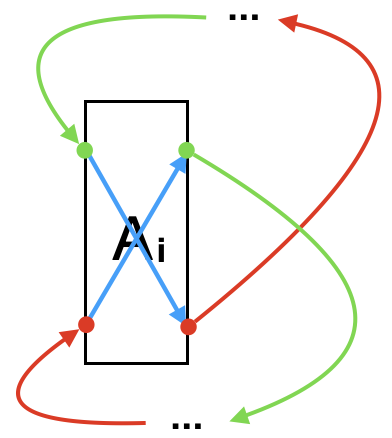}
        \caption{Two directed cycles on $A_i$}
        \label{figure-cycle-blue-2}
    \end{subfigure}
    \caption{Directed cycles on $A_i$ in $G_{\sigma_1,\sigma_2} (A_1,\ldots,A_m)$}
\end{figure}

After changing the two blue directed edges of $A_i$ and keeping all other blue directed edges, the directed cycle containing both two blue directed edges in $A_i$ splits into two directed cycles, each containing only one of the two blue directed edges. Therefore, the change of the blue directed edges of $A_i$ results in an increase in the total number of directed edges by one. This contradicts the definition of $M(\sigma_1,\sigma_2)$.
\end{proof}

The following lemma establishes a relation
between the two quantities $R_{G'_{\sigma_1,\sigma_2}}$ and $M(\sigma_1,\sigma_2)$, where $G'_{\sigma_1,\sigma_2}$ is a full simple partial graph of $G_{\sigma_1,\sigma_2}$.

\begin{proposition} \label{Prop-upper-lower}
For $m \in \bN$, for any permutation $\sigma_1,\sigma_2 \in \cP([m])$, we consider the corresponding graph $G_{\sigma_1,\sigma_2}$. We connect blue directed edges in every rectangle such that the number of directed cycles is $M(\sigma_1,\sigma_2)$. For each directed cycle, we remove either one green directed edge or one red directed edge, then the resulting partial graph $G'_{\sigma_1,\sigma_2}$ is a full simple partial graph of $G_{\sigma_1,\sigma_2}$.
\end{proposition}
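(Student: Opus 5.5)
The plan is to argue by contradiction, after first translating ``full simple'' into a purely graph-theoretic condition. \textbf{Step 1:} forget the blue edges and regard $G'_{\sigma_1,\sigma_2}$ as a directed multigraph on the $m$ rectangles. I claim $G'_{\sigma_1,\sigma_2}$ is simple if and only if this multigraph has no directed cycle.
\begin{itemize}
\item If some choice of blue edges produces a directed cycle, then projecting it to the rectangles gives a closed walk, which contains a directed cycle.
\item Conversely, given a directed cycle $A_{i_1}\to\cdots\to A_{i_s}\to A_{i_1}$ that visits each rectangle once, we wire the blue edges exactly as in the proof of Lemma \ref{Lem-simple}: in each $A_{i_t}$, join the in-vertex of the incoming edge to the out-vertex of the outgoing edge. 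The remaining vertices are completed arbitrarily. This produces a directed cycle.
\end{itemize}
So it suffices to show that, after deleting one non-blue edge $e_C$ from each cycle $C$ of an optimal configuration, the rectangle digraph is acyclic.

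\textbf{Step 2:} suppose not, and take a directed cycle $D$ in $G'_{\sigma_1,\sigma_2}$ of minimal length, so that $D$ visits each rectangle at most once. Fix the optimal blue configuration $\beta$ with its $M=M(\sigma_1,\sigma_2)$ cycles $C_1,\ldots,C_M$. Every edge of $D$ lies in some $C_\ell$ and is different from $e_{C_\ell}$.
\begin{itemize}
\item At a rectangle $A_i$ on $D$, either $\beta$ already joins the in-edge and out-edge used by $D$ (we say $D$ ``follows'' $\beta$ at $A_i$), or it does not (we say $D$ ``switches'' at $A_i$).
\item If $D$ never switches, then $D$ is itself one of the $C_\ell$. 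This is impossible, since $D$ avoids $e_{C_\ell}$.
\item Hence $D$ switches at some rectangles $v_1,\ldots,v_q$ with $q\ge 1$. It decomposes into arcs $P_1,\ldots,P_q$, where $P_t$ is a sub-path of a single cycle $C_{\ell_t}$ running from $v_t$ to $v_{t+1}$.
\item Since $e_{C_{\ell_t}}\notin P_t$, the arc $P_t$ is a proper sub-path of $C_{\ell_t}$ missing $e_{C_{\ell_t}}$.
\item By Lemma \ref{Lem-upper-lower}, at each switching rectangle $v_t$ the two blue edges of $\beta$ lie in different cycles, namely $C_{\ell_{t-1}}$ and $C_{\ell_t}$.
\end{itemize}

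\textbf{Step 3:} modify $\beta$ by swapping the blue edges at the switching rectangles one at a time, and track the number of cycles. By Lemma \ref{Lem-upper-lower} the first swap at $v_1$ merges $C_{\ell_q}$ and $C_{\ell_1}$, so the count drops by one. Each subsequent swap at $v_t$ either merges two distinct cycles, costing one, or splits a cycle, gaining one. After all $q$ swaps, $D$ is a cycle of the new configuration $\beta'$. The arcs $P_t$ have been spliced together, and the complementary arcs of the $C_{\ell_t}$, which contain the $e_{C_{\ell_t}}$, are spliced together as well.

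The goal is to show that the final count is at least $M+1$, contradicting maximality. Equivalently, in the swap sequence the splits outnumber the merges by one, counting $D$ itself plus the leftover complementary pieces. To organise this count I would induct on $q$. The case $q=1$ is immediate: a single swap at $v_1$ joining both ends of $P_1$ splits $C_{\ell_1}$ into $D$ and a remaining cycle, giving $M+1$ cycles. For the inductive step, I would use the minimality of $D$ to control repeated cycles $\ell_t=\ell_{t'}$. This bookkeeping, namely ensuring that the complementary arcs reassemble into enough cycles when several arcs of $D$ lie on the same $C_\ell$, is the step I expect to be the main obstacle. If it resists, I would first try choosing, among all counterexamples $D$, one with the fewest switches. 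I would then try to shorten $D$ along a cycle $C_\ell$, using the fact that each $P_t$ misses $e_{C_\ell}$, until $q=1$.
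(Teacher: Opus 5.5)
Your Steps 1 and 2 are correct. Simplicity of $G'_{\sigma_1,\sigma_2}$ is indeed equivalent to acyclicity of the rectangle digraph, and Lemma \ref{Lem-upper-lower} gives $\ell_{t-1}\neq\ell_t$ and rules out $q=1$. The gap is Step 3, and it is not a bookkeeping issue: the count you want fails already for $q=2$ with distinct cycles. Write $D=P_1P_2$, with $P_1\subset C_1$ running from $v_1$ to $v_2$ and $P_2\subset C_2$ running back. Let $Q_\ell$ be the complementary arc of $C_\ell$; it contains $e_{C_\ell}$. By Lemma \ref{Lem-upper-lower}, each $C_\ell$ passes through $v_1$ and $v_2$ exactly once in the two-leg case. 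So after the two swaps, $C_1$ and $C_2$ are replaced by exactly the two cycles $D$ and $Q_1Q_2$: one merge, one split, total $M$, and no contradiction. The information $e_{C_\ell}\in Q_\ell$ constrains nothing.

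A concrete instance is $m=3$, $\sigma_1=\sigma_2=(123)$. Joining equal colours in every rectangle gives $M=2$ cycles, the green and the red triangle. Delete green $1\to2$ and red $2\to3$. The surviving edges red $1\to2$, green $2\to3$, green $3\to1$ form a directed cycle. The blue wiring green-in$\to$red-out in $A_1$, red-in$\to$green-out in $A_2$, green-in$\to$green-out in $A_3$ realizes it. So $G'_{\sigma_1,\sigma_2}$ is not simple. The proposition is therefore false when read with an arbitrary choice of deleted edge in each cycle, which is how you read it and how Proposition \ref{Prop-multi-upper-lower} states it explicitly. Every directed cycle of $G'$ in this example has $q=2$. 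Hence neither induction on $q$, nor minimality of $D$, nor your fallback of shortening to $q=1$ can repair Step 3.

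The paper's own argument breaks at the corresponding point. It asserts that after swapping the blue edges of one rectangle, the two blue edges of every rectangle still lie on different paths. In the example, the swap at $A_1$ puts both blue edges of $A_2$ on the merged path, and the next swap at $A_2$ closes the cycle.

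What the proof of Theorem \ref{Thm-main} actually uses is only an existence version. One needs some choice of one deleted edge per cycle of some optimal configuration that yields an acyclic rectangle digraph; here, deleting both edges $1\to2$ works. Equivalently, the minimum number of edges whose removal makes the rectangle digraph acyclic must be at most, hence equal to, $M(\sigma_1,\sigma_2)$. That is a genuine min--max (feedback arc set versus arc-disjoint cycle packing) statement. It requires choosing the deleted edges globally and does not follow from Lemma \ref{Lem-upper-lower} plus local swaps, so you would need a different idea altogether.
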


\begin{proof}
By definition, we need to prove that for any possibility of blue directed edges in all rectangles $A_1,\ldots,A_m$, there is no directed cycle in $G'_{\sigma_1,\sigma_2}$.

We fix the connection of the blue directed edges in $G_{\sigma_1,\sigma_2}$ such that the number of directed cycles in $G_{\sigma_1,\sigma_2}$ is $M(\sigma_1,\sigma_2)$. We consider the corresponding partial graph $G'_{\sigma_1,\sigma_2}$ after removing one red or green directed edge for each directed cycle. For any rectangle $A_i$ in $G'_{\sigma_1,\sigma_2}$, by Lemma \ref{Lem-upper-lower}, the two blue directed edges of $A_i$ belong to two different directed paths. Observe that if we change the two blue directed edges in the rectangle $A_i$ by swapping their endpoints with the other in-vertices, then the two new blue directed edges still belong to two different directed paths. In particular, there is still no directed cycle in $G'_{\sigma_1,\sigma_2}$ after this swapping operation. We provide an example in Figure \ref{Path-blue}.

\begin{figure}[ht]
    \begin{subfigure}{0.49\textwidth}
        \centering
        \includegraphics[scale=0.5]{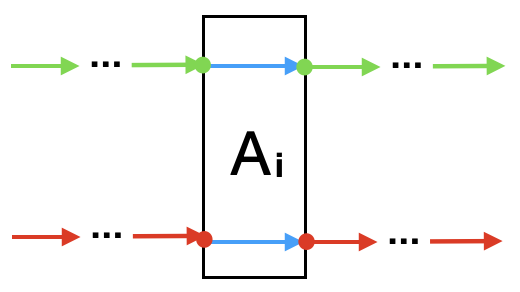}
        \caption{}
    \end{subfigure}
    \hfill
    \begin{subfigure}{0.49\textwidth}
        \centering
        \includegraphics[scale=0.5]{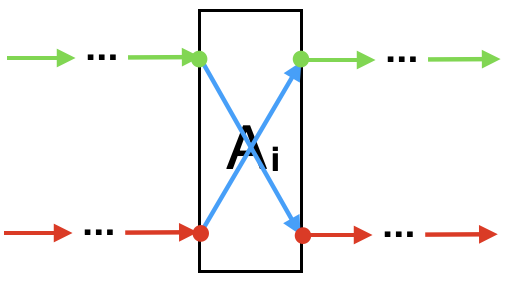}
        \caption{}
    \end{subfigure}
    \caption{Paths on $A_i$ in $G'(\sigma,\tau)$}
    \label{Path-blue}
\end{figure}

Now we start with $G'_{\sigma_1,\sigma_2}$ whose blue directed edges are connected in a way that the number of directed cycles in $G_{\sigma_1,\sigma_2}$ is $M(\sigma_1,\sigma_2)$. Then for any possibility of blue directed edges in $G'_{\sigma_1,\sigma_2}$, we can interpolate the initial connection of blue directed edges with this connection by changing the two blue directed edges of rectangles one by one with finitely many steps.

In the first step, after changing the two blue directed edges of a rectangle, the blue directed edges of each rectangle still belong to different paths.
In the second step, we change the two blue directed edges of another rectangle.
The argument above still works and guarantees that the blue directed edges of each rectangle belong to different paths. Thus, we can continue this procedure by changing the two blue directed edges of rectangles one by one, so that at each step, the two blue directed edges of the same rectangle belong to two different paths.
In particular, at each step, there is no directed cycle in $G'_{\sigma_1,\sigma_2}$. Hence, after finitely many steps, the final state of connection of blue directed edges that we have reached does not contain any directed cycle in $G'_{\sigma_1,\sigma_2}$.
\end{proof}

Now we are ready to prove Theorem \ref{Thm-main}.

\begin{proof}(of Theorem \ref{Thm-main})
By Theorem \ref{Thm-upper} and Theorem \ref{Thm-lower}, we have
\begin{align} \label{ineq-upper-lower-1}
    N^{M(\sigma_1,\sigma_2)}
    &= \max_{A_1,\ldots,A_m \in \{I_N \otimes I_N, U\}} \left| \left( \Tr_{\sigma_1} \otimes \Tr_{\sigma_2} \right) \left( A_1, \ldots, A_m \right) \right| \nonumber \\
    &\le \max_{A_1,\ldots,A_m} \left| \left( \Tr_{\sigma_1} \otimes \Tr_{\sigma_2} \right) \left( A_1, \ldots, A_m \right) \right|
    \le \min_{G'_{\sigma_1,\sigma_2}} N^{R_{G'_{\sigma_1,\sigma_2}}},
\end{align}
where the $\max_{A_1,\ldots,A_m}$ is taken over all unitary matrices $A_1,\ldots,A_m \in M_N (\bC) \otimes M_N(\bC)$, and the $\min_{G'_{\sigma_1,\sigma_2}}$ is taken over all full simple partial graphs $G'_{\sigma_1,\sigma_2}$ of $G_{\sigma_1,\sigma_2}$.

For the blue directed edges in $G_{\sigma_1,\sigma_2}$, such that the number of directed cycles is $M(\sigma_1,\sigma_2)$, we remove either one green directed edge or one red directed edge to obtain a partial graph $\tilde G'_{\sigma_1,\sigma_2}$.
By Proposition \ref{Prop-upper-lower}, $\tilde G'_{\sigma_1,\sigma_2}$ is a full simple partial graph of $G_{\sigma_1,\sigma_2}$. Hence, the number of directed edges in its complement partial graph is $R_{\tilde G'(\sigma_1,\sigma_2)} = M(\sigma_1,\sigma_2)$. Therefore,
\begin{align} \label{ineq-upper-lower-2}
    N^{M(\sigma_1,\sigma_2)} = N^{R_{\tilde G'_{\sigma_1,\sigma_2}}}
    \ge \min_{G'_{\sigma_1,\sigma_2}} N^{R_{G'_{\sigma_1,\sigma_2}}}.
\end{align}
The proof is concluded by combining \eqref{ineq-upper-lower-1} and \eqref{ineq-upper-lower-2}.
\end{proof}

\section{Proof of Theorem \ref{Thm-main-multi}} \label{sec:multi-legs}

The proof of Theorem \ref{Thm-main-multi} follows the same idea as that of Theorem \ref{Thm-main} and and we sketch it below.

We recall the graphical interpretation of the partial trace in Section \ref{sec:graph-multi leg}.
The partial graph $G'_{\sigma_1,\ldots,\sigma_k} (A_1,\ldots,A_m)$ of a graph $G_{\sigma_1,\ldots,\sigma_k} (A_1,\ldots,A_m)$ is called a \emph{simple} partial graph if it does not have any directed cycles for all possibilities of the blue directed edges. A simple partial graph $G'_{\sigma_1,\ldots,\sigma_k} (A_1,\ldots,A_m)$ of $G_{\sigma_1,\ldots,\sigma_k} (A_1,\ldots,A_m)$ is \emph{full} if $G'_{\sigma_1,\ldots,\sigma_k} (A_1,\ldots,A_m)$ and $G_{\sigma_1,\ldots,\sigma_k} (A_1,\ldots,A_m)$ have the same number of rectangles.

For a full simple partial graph $G'_{\sigma_1,\ldots,\sigma_k}$ of $G_{\sigma_1,\ldots,\sigma_k}$, we denote by $R_{G'_{\sigma_1,\ldots,\sigma_k}}$ the number of edges of the complement partial graph $G''_{\sigma_1,\ldots,\sigma_k}$.

We first handle the case where $A_1,\ldots,A_m$ are unitary matrices.
For the upper bound, we follow the steps in Section \ref{sec:upper}.
Let $G'_{\sigma_1,\ldots,\sigma_k}$ be a full simple partial graph of $G_{\sigma_1,\ldots,\sigma_k}$, and $G''_{\sigma_1,\ldots,\sigma_k}$ be the complement partial graph of $G'_{\sigma_1,\ldots,\sigma_k}$.
Then, by the Cauchy-Schwarz inequality, we get
\begin{align} \label{ineq-G-multi-1}
    \left| \left( \Tr_{\sigma_1} \otimes \ldots \otimes \Tr_{\sigma_k} \right) \left( A_1, \ldots, A_m \right) \right|
    &= \left| G_{\sigma_1,\ldots,\sigma_k} \left( A_1, \ldots, A_m \right) \right| \nonumber \\
    &= \left| \left\langle G'_{\sigma_1,\ldots,\sigma_k} \left( A_1, \ldots, A_m \right), G''_{\sigma_1,\ldots,\sigma_k} \left( A_1, \ldots, A_m \right) \right\rangle \right| \nonumber \\
    &\le \left\|G'_{\sigma_1,\ldots,\sigma_k} \left( A_1, \ldots, A_m \right) \right\|_2 \left\| G''_{\sigma_1,\ldots,\sigma_k} \left( A_1, \ldots, A_m \right) \right\|_2.
\end{align}
Note that $G''_{\sigma_1,\ldots,\sigma_k}$ is a graph with only edges, and the number of edges is $R_{G'_{\sigma_1,\ldots,\sigma_k}}$. We have
\begin{align} \label{eq-G''-multi}
    \left\| G''_{\sigma_1,\ldots,\sigma_k} \left( A_1, \ldots, A_m \right) \right\|_2^2 = N^{R_{G'_{\sigma_1,\ldots,\sigma_k}}}.
\end{align}
Next, we compute $\|G'_{\sigma_1,\ldots,\sigma_k} (A_1,\ldots,A_m)\|_2$. We introduce the following multiple leg version of Lemma \ref{Lem-simple-subgraph} and Lemma \ref{Lem-simple}.

\begin{lemma} \label{Lem-multi-simple-subgraph}
For any $m \in \bN$, for any permutations $\sigma_1,\ldots,\sigma_k \in \cP([m])$, we consider the graph $G_{\sigma_1,\ldots,\sigma_k}$. Let $H$ be a simple partial graph of graph $G_{\sigma_1,\ldots,\sigma_k}$. For any partial graph $H'$ of $H$, it is a simple partial graph of the graph $G_{\sigma_1,\ldots,\sigma_k}$.
\end{lemma}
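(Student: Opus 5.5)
The statement is a monotonicity property, and I would prove it directly from the definition of a simple partial graph, exactly as for Lemma \ref{Lem-simple-subgraph}. The key observation is that a directed cycle in a graph with blue directed edges is a purely local object: it is a closed sequence of coloured edges and blue edges. Passing to a partial graph can only destroy such sequences, never create them. The plan is therefore a contrapositive argument: assume $H'$ is not simple and produce a directed cycle in $H$ for a suitable choice of blue edges.

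\textbf{Step 1: lifting the blue edges.} Since $H'$ is a partial graph of $H$, it is obtained from $H$ by deleting some directed edges of colours $col_1,\ldots,col_k$ and some rectangles. Every rectangle of $H'$ is a rectangle of $H$ and keeps the same $k$ in-vertices and $k$ out-vertices. Every coloured edge of $H'$ is an edge of $H$ joining the same vertices. Suppose some admissible choice of blue directed edges in the rectangles of $H'$ creates a directed cycle $C$ in $H'$. I extend this choice to all of $H$:
\begin{itemize}
\item in the rectangles of $H$ that survive in $H'$, I use the same blue edges;
\item in the rectangles of $H$ that were removed, I use any admissible pairing, for instance matching each in-vertex of colour $col_j$ to the out-vertex of colour $col_j$.
\end{itemize}
The result is an admissible blue-edge configuration on $H$, since each blue edge still goes from an in-vertex to an out-vertex of its own rectangle and no two blue edges share a vertex.

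\textbf{Step 2: the cycle survives.} Every edge of $C$ is either a coloured edge of $H'$, and hence of $H$, or a blue edge inside a rectangle of $H'$, which is present in the extended configuration. So $C$ is a directed cycle in $H$ with this configuration. This contradicts the simplicity of $H$. Hence $H'$ admits no directed cycle for any choice of blue edges, and is simple.

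\textbf{Step 3: being a partial graph of the ambient graph.} It remains to check that $H'$ is a partial graph of $G_{\sigma_1,\ldots,\sigma_k}$ itself. The deletions producing $H$ from $G_{\sigma_1,\ldots,\sigma_k}$, followed by those producing $H'$ from $H$, together form a single set of deletions from $G_{\sigma_1,\ldots,\sigma_k}$. So the relation ``is a partial graph of'' is transitive.

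The only point requiring any care is Step 1: the blue edges chosen on $H'$ must extend to a legitimate configuration on $H$. This holds because blue edges are confined to single rectangles and the constraints on different rectangles are independent. I do not expect any genuine obstacle. The argument is identical to the two-leg case, with the number of colours playing no role.
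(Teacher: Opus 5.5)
Your proof is correct and follows the same approach as the paper, which omits the argument as immediate from the definition of a simple partial graph. You extend a cycle-producing blue-edge configuration on $H'$ to one on $H$, so the same cycle persists and contradicts the simplicity of $H$. Transitivity of taking partial graphs then shows that $H'$ is a partial graph of $G_{\sigma_1,\ldots,\sigma_k}$.
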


The proof of this lemma is directly from the definition of the simple partial graph, and is omitted.

\begin{lemma} \label{Lem-multi-simple}
For any $m \in \bN$, for any permutations $\sigma_1,\ldots,\sigma_k \in \cP([m])$, we consider the graph $G_{\sigma_1,\ldots,\sigma_k}$. Let $H$ be a simple partial graph of graph $G_{\sigma_1,\ldots,\sigma_k}$. Then in $H$, there exists a rectangle $A_i$, such that either $A_i$ does not have out-edges or $A_i$ does not have in-edges.
\end{lemma}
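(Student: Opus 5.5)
I will argue by contradiction, adapting the proof of Lemma \ref{Lem-simple} to $k$ colors. Suppose that every rectangle of $H$ has at least one in-edge and at least one out-edge. I will then choose blue directed edges in the rectangles of $H$ so that a directed cycle appears. This contradicts the simplicity of $H$, which requires acyclicity for \emph{every} choice of blue edges.

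First I dispose of loops. If $H$ contains a directed edge of some color $col_j$ from a rectangle $A_i$ to itself, I take a blue directed edge in $A_i$ from the in-vertex of that loop to its out-vertex. Both vertices have the same color and belong to the same rectangle, so this is an admissible blue edge. I complete the remaining blue edges of $A_i$ by any bijection between the remaining in-vertices and out-vertices; this is possible because each rectangle has exactly $k$ of each. The blue edge and the loop together form a directed cycle.

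Otherwise I build a walk. I start at any rectangle $A_{i_1}$ and follow one of its out-edges to $A_{i_2}$. Since $A_{i_2}$ also has an out-edge, I follow it to $A_{i_3}$, and continue in this way, obtaining a sequence $i_1,i_2,\ldots$ in which each step uses an edge of $H$. By finiteness there is a first repetition $i_t=i_s$ with $t<s$ and $i_t,\ldots,i_{s-1}$ distinct. In each rectangle $A_{i_r}$ for $t\le r\le s-1$ I place one blue edge, from the in-vertex of the edge $A_{i_{r-1}}\to A_{i_r}$ to the out-vertex of the edge $A_{i_r}\to A_{i_{r+1}}$, with indices read cyclically within the cycle $t,\ldots,s-1$ (so at $A_{i_t}$ the incoming edge is the one from $A_{i_{s-1}}$). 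The remaining in-vertices and out-vertices of every rectangle I pair arbitrarily. The prescribed blue edges together with the edges $A_{i_r}\to A_{i_{r+1}}$ then form a directed cycle in $H$.

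The only point needing care is that the prescribed blue edges are admissible, i.e.\ no two of them share a vertex. This holds because each rectangle on the cycle is visited exactly once, so it receives exactly one prescribed blue edge. That edge uses one in-vertex and one out-vertex, and these lie on distinct edges of $H$ (they cannot coincide, as there are no loops). Hence a valid completion always exists in the $k$-color setting. Since in-vertex and out-vertex colors may differ in an arbitrary blue pairing, no color constraint arises. I expect no real obstacle: the argument is the same as in Lemma \ref{Lem-simple}, with $2$ replaced by $k$.
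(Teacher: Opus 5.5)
Your proposal is correct and follows the same route as the paper: handle loops directly, then follow out-edges until the first repeated rectangle and route blue edges along the resulting closed walk to obtain a directed cycle, which contradicts simplicity. Your choice of the blue edge at $A_{i_t}$, taken from the edge coming in from $A_{i_{s-1}}$ rather than from $A_{i_{t-1}}$ as the paper's sketch implicitly does, together with your explicit check that the prescribed blue edges share no vertex, is in fact slightly more careful than the written proof.
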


\begin{proof}
The proof follows from the same argument as that of Lemma \ref{Lem-simple}, and we outline it below.

We prove by contradiction and assume that all rectangles in $H$ have both in-edges and out-edges.

If there is a loop in $H$, then we connect the in-vertex and the out-vertex of this loop with a blue directed edge to get a directed cycle.

In the following, we assume that there is no loop in $H$.
We start with a rectangle $A_{i_1}$ for $i_1 \in [m]$. 
As $A_{i_1}$ has an out-edge, we can find its successor $A_{i_2}$, where $i_2 \in [m]$ and $i_2 \not= i_1$.
Similarly, $A_{i_2}$ has a successor $A_{i_3}$ for some $i_3 \not= i_2$.
In the rectangle $A_{i_2}$, we connect the in-vertex of the directed edge from $A_{i_1}$ to $A_{i_2}$ with the out-vertex of the directed edge from $A_{i_2}$ to $A_{i_3}$ by a blue directed edge.
We can continue this procedure to obtain a sequence of indices $i_1 \not= i_2 \not= \ldots$, such that for each $r \ge 2$, there is a directed edge from rectangle $A_{i_{r-1}}$ to rectangle $A_{i_r}$, a directed edge from rectangle $A_{i_r}$ to $A_{i_{r+1}}$, and a blue directed edge in rectangle $A_{i_r}$ connecting the two directed edges.
Thus, we can find the first index $i_s$ which coincides with some $i_t$ for $t<s$. Then we have a directed cycle $A_{i_t} \to A_{i_{t+1}} \to \ldots \to A_{i_s} = A_{i_t}$.

The existence of directed cycles contradicts the setting that $H$ is simple.
\end{proof}

From Lemma \ref{Lem-multi-simple-subgraph} and Lemma \ref{Lem-multi-simple}, we can deduce the multiple legs version of Lemma \ref{Lem-inner product}, allowing us to compute $\|G'_{\sigma_1,\ldots,\sigma_k}\|_2$.

\begin{lemma} \label{Lem-multi-inner product}
For any $m \in \bN$, for any permutations $\sigma_1,\ldots,\sigma_k \in \cP([m])$, we consider the graph $G_{\sigma_1,\ldots,\sigma_k}$. Let $H$ be a simple partial graph of graph $G_{\sigma_1,\ldots,\sigma_k}$. Let $n$ be the number of rectangles and $e$ be the total number of directed edges in the graph $H$. Then we have
\begin{align*}
    \|H\|_2^2 = N^{kn-e}.
\end{align*}
\end{lemma}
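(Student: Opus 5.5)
We argue by induction on the number $n$ of rectangles, exactly as in the proof of Lemma \ref{Lem-inner product}, with the matrices $A_i$ unitary in $M_N(\bC)^{\otimes k}$. Here $\|H\|_2^2$ is the graph obtained by gluing $H$ to its mirror copy $H^*$ (rectangles $A_i^*$, reversed orientations) along all open vertices. Concretely, each open out-vertex of $A_i$ in $H$ is joined to the corresponding open vertex of $A_i^*$, and likewise for open in-vertices.

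For the base case $n=1$, a simple partial graph with a single rectangle carries no edges. Indeed, any edge would be a loop, and a blue edge joining its endpoints would produce a directed cycle. Hence all $2k$ vertices are open, and $\|H\|_2^2=\Tr(A_1A_1^*)=\Tr(I_N^{\otimes k})=N^k$, which agrees with $N^{kn-e}$ for $n=1$, $e=0$.

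For the inductive step, Lemma \ref{Lem-multi-simple} gives a rectangle $A_1$ of $H$ with no in-edges, or with no out-edges. The two situations are symmetric under passing to $H^*$, so assume $A_1$ has no in-edges. Let $S\subseteq[k]$ be the set of colors $j$ for which $A_1$ has an out-edge in $H$, and let $H'$ be obtained from $H$ by deleting $A_1$ together with these $|S|$ out-edges. By Lemma \ref{Lem-multi-simple-subgraph}, $H'$ is again simple, with $n-1$ rectangles and $e-|S|$ edges.

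In the glued picture for $\|H\|_2^2$, all $k$ in-vertices of $A_1$ are open, so they are joined directly to the corresponding out-vertices of $A_1^*$. This means the product $A_1^*A_1$ appears, contracted along all $k$ input legs, and unitarity gives $A_1^*A_1=I_N^{\otimes k}$.

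After this replacement, each color $j\notin S$ produces a closed loop contributing a factor $\Tr(I_N)=N$. Each color $j\in S$ becomes an identity strand. That strand joins the head of the out-edge in $H$ (an in-vertex of some $A_{\sigma_j(1)}$) to the mirrored vertex in $H^*$, which is exactly the gluing of an open in-vertex in $\|H'\|_2^2$. Therefore
\[
\|H\|_2^2=N^{k-|S|}\,\|H'\|_2^2=N^{k-|S|}N^{k(n-1)-(e-|S|)}=N^{kn-e}.
\]

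The main point needing care is the bookkeeping in this identification for $j\in S$: we must check that deleting the out-edge and rerouting through the identity strand reproduces precisely the open-vertex pairing that defines $\|H'\|_2^2$. In the two-leg proof this is Cases 2 and 4. With $k$ colors it is the same verification carried out color by color, since the legs of $I_N^{\otimes k}$ do not interact. No case analysis over the $2^k$ subsets $S$ is required beyond this uniform argument.
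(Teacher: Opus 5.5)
Your proof is correct and follows the paper's own argument: induction on the number of rectangles, with a source rectangle supplied by Lemma \ref{Lem-multi-simple}. Unitarity collapses $A_1^*A_1$ into identity strands, which gives the factor $N^{k-o}$ (your $N^{k-|S|}$), and Lemma \ref{Lem-multi-simple-subgraph} keeps $H'$ simple; your color-by-color bookkeeping just makes explicit what the paper abbreviates as ``as in Lemma \ref{Lem-inner product}''.
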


\begin{proof}
The proof follows from the same argument as that of Lemma \ref{Lem-inner product}, and we sketch it below.

We do induction on $n$. The case $n=1$ is trivial since $H$ has to be a graph with only one rectangle and no edges. Thus, $\|H\|_2^2 = N^k$.

We assume that the conclusion holds for any simple partial graph with $n-1$ rectangles.
We consider any simple partial graph $H$ with $n$ rectangles $A_1, \ldots, A_n$.
By Lemma \ref{Lem-multi-simple}, without loss of generality, we can assume that $A_1$ does not have any in-edges. Denote by $o$ the number of out-edges of $A_1$, then as in Lemma \ref{Lem-inner product}, one can show that
\begin{align*}
    \|H\|_2^2 = \Tr(I_{N^{k-o}}) \|H'\|_2^2 = N^{k-o} \|H'\|_2^2,
\end{align*}
where $H'$ is a partial graph of $H$ obtained by removing the rectangle $A_1$ together with the directed edges associated with $A_1$.
Moreover, $H'$ has $n-1$ rectangles and $e-o$ edges.
By Lemma \ref{Lem-multi-simple-subgraph}, $H'$ is also a simple partial graph of $G_{\sigma_1,\ldots,\sigma_k}$, leading to $\|H'\|_2^2 = N^{k(n-1)-(e-o)}$, by induction. The proof is concluded.
\end{proof}

Recall that $G'_{\sigma_1,\ldots,\sigma_k}$ is a full simple partial graph of $G_{\sigma_1,\ldots,\sigma_k}$, and its number of edges is $km - R_{G'_{\sigma_1,\ldots,\sigma_k}}$. We apply Lemma \ref{Lem-multi-inner product} to deduce that
\begin{align} \label{eq-G'-multi}
    \left\| G'_{\sigma_1,\ldots,\sigma_k} \right\|_2^2
    = N^{km - (km - R_{G'_{\sigma_1,\ldots,\sigma_k}})}
    = N^{R_{G'_{\sigma_1,\ldots,\sigma_k}}}.
\end{align}

Substituting \eqref{eq-G''-multi} and \eqref{eq-G'-multi} into \eqref{ineq-G-multi-1}, we derive
\begin{align} \label{ineq-G-multi-2}
    \left| \left( \Tr_{\sigma_1} \otimes \ldots \otimes \Tr_{\sigma_k} \right) \left( A_1, \ldots, A_m \right) \right|
    \le N^{R_{G'_{\sigma_1,\ldots,\sigma_k}}}.
\end{align}
As \eqref{ineq-G-multi-2} holds for all unitary matrices $A_1,\ldots,A_m$ and any full simple partial graph $G'(\sigma_1,\ldots,\sigma_k)$, we obtain the following upper bound
\begin{align} \label{ineq-multi-upper}
    \max_{A_1,\ldots,A_m} \left| \left( \Tr_{\sigma_1} \otimes \ldots \otimes \Tr_{\sigma_k} \right) \left( A_1, \ldots, A_m \right) \right|
    \le \min_{G'_{\sigma_1,\ldots,\sigma_k}} N^{R_{G'_{\sigma_1,\ldots,\sigma_k}}},
\end{align}
where $\max_{A_1,\ldots,A_m}$ is taken over all unitary matrices $A_1,\ldots,A_m \in M_N (\bC)^{\otimes k}$, and the $\min_{G'_{\sigma_1,\ldots,\sigma_k}}$ is taken over all full simple partial graphs $G'_{\sigma_1,\ldots,\sigma_k}$ of $G_{\sigma_1,\ldots,\sigma_k}$.

Next, we turn to the lower bound and follow the steps in Section \ref{sec:lower}.

For $1 \le j \le k$, we introduce the permutation $\pi_j \in \cP([k])$ associated with the rectangle $A_j$ by recording the connections of the blue directed edges.
More precisely, in the rectangle $A_j$, if there is a blue edge from the in-vertex of color $col_i$ to the out-vertex of color $col_l$, then we set $\pi_j(i) = l$. Thus, the connection of the blue directed edges in the rectangle $A_j$ is equivalent to the permutation $\pi_j$, and all the possibilities of the blue directed edges in the rectangle $A_j$ are one to one corresponding to $\cP([k])$.

For any permutation $\pi \in \cP([k])$, we set
\begin{align*}
    U_\pi = \sum_{i_1,\ldots,i_k=1}^N E_{i_1i_{\pi(1)}} \otimes E_{i_2i_{\pi(2)}} \otimes \ldots \otimes E_{i_ki_{\pi(k)}}.
\end{align*}
Then, for any $\pi \in \cP([k])$, $U_\pi$ is a unitary matrix on $M_N(\bC)^{\otimes k}$.

On the one hand, for any possibility of a connection of blue edges in the graph $G_{\sigma_1,\ldots,\sigma_k}$ and the corresponding permutations $\pi_1, \ldots, \pi_m$, we claim the following:
\begin{align} \label{eq-multi-U}
    \left( \Tr_{\sigma_1} \otimes \ldots \otimes \Tr_{\sigma_k} \right) \left( U_{\pi_1}, \ldots, U_{\pi_m} \right) = N^M,
\end{align}
where $M$ is the number of directed cycles in $G_{\sigma_1,\ldots,\sigma_k}$. The proof of claim \eqref{eq-multi-U} is similar to the proof of claim \eqref{eq-id-U}. Indeed, 
\begin{align*}
    &\left( \Tr_{\sigma_1} \otimes \ldots \otimes \Tr_{\sigma_k} \right) \left( U_{\pi_1}, \ldots, U_{\pi_m} \right) \\
    &= \sum_{i_1^{(1)},\ldots,i_k^{(m)}=1}^N \left( \Tr_{\sigma_1} \otimes \ldots \otimes \Tr_{\sigma_k} \right) \Big( E_{i_1^{(1)}i_{\pi_1(1)}^{(1)}} \otimes \ldots \otimes E_{i_k^{(1)}i_{\pi_1(k)}^{(1)}}, \\
    &\hspace{6cm} \ldots, E_{i_1^{(m)}i_{\pi_m(1)}^{(m)}} \otimes \ldots \otimes E_{i_k^{(m)}i_{\pi_m(k)}^{(m)}} \Big) \\
    &= \sum_{i_1^{(1)},\ldots,i_k^{(m)}=1}^N \prod_{j=1}^k \Tr_{\sigma_j} \left( E_{i_j^{(1)}i_{\pi_1(j)}^{(1)}}, \ldots, E_{i_j^{(m)}i_{\pi_m(j)}^{(m)}} \right) \\
    &= \sum_{i_1^{(1)},\ldots,i_k^{(m)}=1}^N 1_{i_{\pi_l(j)}^{(l)} = i_j^{(\sigma_j(l))}, \forall 1\le j \le k, 1 \le l \le m}
    = N^M,
\end{align*}
noting that the $km$ restrictions
\begin{align*}
    i_{\pi_l(j)}^{(l)} = i_j^{(\sigma_j(l))}, \forall 1\le j \le k, 1 \le l \le m
\end{align*}
contain $km-M$ independent equations. From claim \eqref{eq-multi-U}, we can deduce
\begin{align} \label{ineq-multi-1}
    \max_{A_1,\ldots,A_m \in \{U_\pi:\pi \in \cP([k])\}} \left| \left( \Tr_{\sigma_1} \otimes \ldots \otimes \Tr_{\sigma_k} \right) \left( A_1, \ldots, A_m \right) \right|
    \ge N^{M(\sigma_1,\ldots,\sigma_k)}
\end{align}
by choosing the blue edges such that $M = M(\sigma_1,\ldots,\sigma_k)$.

On the other hand, for any matrices $A_1,\ldots,A_m \in \{U_\pi:\pi \in \cP([k])\}$, we consider the graph $G_{\sigma_1,\ldots,\sigma_k}$ with blue directed edges connected according to the permutations $\pi_1,\ldots,\pi_m$, which are determined by the matrices $A_1,\ldots,A_m$. By the definition of $M(\sigma_1,\ldots,\sigma_k)$ together with the claim \eqref{eq-multi-U}, we have
\begin{align*}
    \left( \Tr_{\sigma_1} \otimes \ldots \otimes \Tr_{\sigma_k} \right) \left( U_{\pi_1}, \ldots, U_{\pi_m} \right)
    \le N^{M(\sigma_1,\ldots,\sigma_k)},
\end{align*}
which implies
\begin{align} \label{ineq-multi-2}
    \max_{A_1,\ldots,A_m \in \{U_\pi:\pi \in \cP([k])\}} \left| \left( \Tr_{\sigma_1} \otimes \ldots \otimes \Tr_{\sigma_k} \right) \left( A_1, \ldots, A_m \right) \right|
    \le N^{M(\sigma_1,\ldots,\sigma_k)}.
\end{align}
Combining \eqref{ineq-multi-1} and \eqref{ineq-multi-2}, we get
\begin{align} \label{eq-multi-lower}
    \max_{A_1,\ldots,A_m \in \{U_\pi:\pi \in \cP([k])\}} \left| \left( \Tr_{\sigma_1} \otimes \ldots \otimes \Tr_{\sigma_k} \right) \left( A_1, \ldots, A_m \right) \right|
    = N^{M(\sigma_1,\ldots,\sigma_k)}.
\end{align}

What remains is to prove that the upper bound \eqref{ineq-multi-upper} matches the lower bound \eqref{eq-multi-lower}. We develop the proof following the strategy of Section \ref{sec:2legs}. We start with the multiple leg version of Lemma \ref{Lem-upper-lower} and Proposition \ref{Prop-upper-lower}.

\begin{lemma} \label{Lem-multi-upper-lower}
For $m \in \bN$, for any permutations $\sigma_1, \ldots, \sigma_k \in \cP([m])$, we consider the corresponding graph $G_{\sigma_1, \ldots, \sigma_k}$. We connect blue directed edges in every rectangle such that the number of directed cycles is $M(\sigma_1, \ldots, \sigma_k)$. Then for each rectangle $A_i$, any two blue directed edges in $A_i$ belong to different directed cycles.
\end{lemma}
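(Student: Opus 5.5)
We argue by contradiction, following the proof of Lemma \ref{Lem-upper-lower}. Fix a connection of the blue directed edges realizing $M(\sigma_1,\ldots,\sigma_k)$ directed cycles, and suppose some rectangle $A_i$ contains two blue directed edges $b_1$ and $b_2$ lying on the same directed cycle $C$. Write $b_1$ as going from the in-vertex $u_1$ to the out-vertex $v_1$, and $b_2$ as going from $u_2$ to $v_2$.

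The first step is to describe $C$ carefully. Every vertex of the graph has exactly one incoming and one outgoing edge once the blue edges are added: an in-vertex has one colored in-edge and one blue out-edge, and an out-vertex has one blue in-edge and one colored out-edge. So the directed cycles partition the edge set. Traversing $C$ starting from $v_1$, we follow a directed path $P_1$ from $v_1$ to $u_2$, then the edge $b_2$, then a directed path $P_2$ from $v_2$ back to $u_1$, and finally $b_1$. The paths $P_1$ and $P_2$ are edge-disjoint and avoid $b_1$ and $b_2$. They may pass through other blue edges of $A_i$, and even through other rectangles several times, but that does not matter: they are simply paths in the graph with $b_1,b_2$ deleted.

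The second step is to perform the swap. Replace $b_1,b_2$ by $b_1'$ from $u_1$ to $v_2$ and $b_2'$ from $u_2$ to $v_1$, keeping every other blue edge, including the remaining $k-2$ blue edges of $A_i$. This is still a valid blue connection, since the edges of $A_i$ still form a perfect matching from in-vertices to out-vertices; in the language of permutations, $\pi_i$ is composed with a transposition. After the swap, $b_2'$ followed by $P_1$ closes into a directed cycle, and $b_1'$ followed by $P_2$ closes into another. Every directed cycle of the original graph other than $C$ is untouched, because it uses none of $b_1,b_2$. Hence the number of directed cycles rises to $M(\sigma_1,\ldots,\sigma_k)+1$, contradicting maximality.

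The only point requiring care is the claim that the two new cycles are distinct and that nothing else changes. This follows from the functional structure: each vertex has a unique successor, so the cycle through $b_1'$ is determined by following successors from $v_2$. Doing so traces $P_2$ and ends at $u_1$ without using $b_2'$. The cycle through $b_2'$ is likewise traced by $P_1$. The two cycles are therefore different, and all other cycles remain as before. This is the same counting argument as in the two-leg case; the extra blue edges in $A_i$ play no special role beyond being part of $P_1$ or $P_2$.
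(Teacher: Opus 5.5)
Your proposal is correct and follows the paper's own argument: swap the endpoints of the two blue edges lying on the common cycle, so that cycle splits into two while all other cycles are unchanged, which contradicts the maximality of $M(\sigma_1,\ldots,\sigma_k)$. The paper only sketches this by reference to the two-leg case. Your added observation, that every vertex has in-degree and out-degree one so the cycles partition the edges, makes explicit the step the paper leaves implicit: the swap is a valid blue connection and affects nothing outside the cycle $C$.
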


\begin{proof}
The proof is similar to that of Lemma \ref{Lem-upper-lower}, and we outline it below.
If there is a rectangle $A_i$ and two of the blue directed edges in $A_i$ belong to the same directed cycle, then we can change the two blue directed edges as in the two legs case.
After this changing operation, the directed cycle containing these two blue directed edges is split into two directed cycles. Each directed cycle contains only one of the two blue directed edges.
Hence, the total number of directed cycles increases by one. This contradicts the definition of $M(\sigma_1, \ldots, \sigma_k)$.
\end{proof}

\begin{proposition} \label{Prop-multi-upper-lower}
For $m \in \bN$, for any permutations $\sigma_1, \ldots, \sigma_k \in \cP([m])$, we consider the corresponding graph $G_{\sigma_1, \ldots, \sigma_k}$.
We connect blue directed edges in every rectangle such that the number of directed cycles is $M(\sigma_1, \ldots, \sigma_k)$. For each directed cycle, we remove one directed edge of color $col_j$ for arbitrary $1 \le j \le k$ arbitrarily to obtain the partial graph $G'_{\sigma_1, \ldots, \sigma_k}$. Then $G'_{\sigma_1, \ldots, \sigma_k}$ is a full simple partial graph of $G_{\sigma_1, \ldots, \sigma_k}$.
\end{proposition}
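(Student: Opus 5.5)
The plan is to mimic Proposition \ref{Prop-upper-lower}: start from an optimal blue configuration, cut each cycle once, and then move to an arbitrary blue configuration by a finite sequence of elementary moves that never create a directed cycle in $G'_{\sigma_1,\ldots,\sigma_k}$.

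\textbf{Set-up.} Fix blue edges $\pi^{0}=(\pi^0_1,\ldots,\pi^0_m)\in\cP([k])^m$ realising $M=M(\sigma_1,\ldots,\sigma_k)$ cycles in $G_{\sigma_1,\ldots,\sigma_k}$. Remove one coloured edge from each cycle to get $G'_{\sigma_1,\ldots,\sigma_k}$. Every vertex of $G_{\sigma_1,\ldots,\sigma_k}$ has exactly one incoming and one outgoing edge (coloured or blue), so every component of $G_{\sigma_1,\ldots,\sigma_k}$ is a cycle. In $G'_{\sigma_1,\ldots,\sigma_k}$ with the blue edges $\pi^0$, each cycle is therefore cut into exactly one directed path, giving $M$ paths and no cycles. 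More generally, for any configuration $\pi$, the components of $G'_{\sigma_1,\ldots,\sigma_k}(\pi)$ are exactly $M$ paths (one starting after each removed edge) plus some number of directed cycles. The goal is to show this number of cycles is $0$ for every $\pi$.

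\textbf{Elementary moves.} Every $\pi_i\in\cP([k])$ is reached from $\pi^0_i$ by transpositions. Composing $\pi_i$ with a transposition means swapping the out-endpoints of two blue edges $e,f$ inside $A_i$.
\begin{itemize}
\item If $e,f$ lie on two different paths $P=P_1eP_2$ and $Q=Q_1fQ_2$, the swap produces $P_1e'Q_2$ and $Q_1f'P_2$, again paths, so no cycle appears.
\item If $e,f$ lie on the same path, or one of them lies on a cycle, the swap can create a cycle, or merge or split a cycle.
\end{itemize}
So the key is to maintain the invariant:

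\begin{center}
(I) $G'_{\sigma_1,\ldots,\sigma_k}(\pi)$ has no directed cycles, and any two blue edges of the same rectangle lie on distinct paths.
\end{center}

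At $\pi^0$, (I) holds by Lemma \ref{Lem-multi-upper-lower}: two blue edges of a rectangle lie on distinct cycles of $G_{\sigma_1,\ldots,\sigma_k}$, hence on distinct paths after cutting.

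\textbf{Main obstacle.} After a swap at $A_i$, the paths are recombined as $P_1Q_2$ and $Q_1P_2$. A different rectangle $A_j$ with one blue edge on $P_1$ and another on $Q_2$ would then have both blue edges on the same path, breaking (I).

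To handle this, I would first try to show that such a configuration contradicts maximality. Close the paths up again with the removed edges. If $A_j$ had blue edges on $P_1$ and $Q_2$, then the configuration obtained from $\pi^0$ by applying both the current accumulated moves and a compensating swap at $A_j$ would have more than $M$ cycles in $G_{\sigma_1,\ldots,\sigma_k}$. This uses the fact that each transposition changes the cycle count of $G_{\sigma_1,\ldots,\sigma_k}$ by exactly $\pm1$, via the standard permutation fact: the cycles of $G_{\sigma_1,\ldots,\sigma_k}(\pi)$ are the cycles of the product of the fixed coloured-edge bijection (out-vertices to in-vertices) with the blue bijection.

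If that route fails, the fallback is to order the moves cleverly. Specifically, I would process transpositions so that each swap only involves two paths whose other rectangles already satisfy the invariant. Alternatively, I would replace (I) by the weaker statement ``the number of cycles of $G'_{\sigma_1,\ldots,\sigma_k}(\pi)$ is $0$''. That statement could be proved directly by contradiction: a cycle $C$ in $G'_{\sigma_1,\ldots,\sigma_k}(\pi)$ avoids all removed edges. Then re-using $\pi$ only on the rectangles visited by $C$ and $\pi^0$ elsewhere, and rerouting through these rectangles, should produce a configuration of $G_{\sigma_1,\ldots,\sigma_k}$ with $M+1$ cycles, contradicting the definition of $M(\sigma_1,\ldots,\sigma_k)$.

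I expect this maximality/rerouting step to be the only genuinely delicate point. The remaining bookkeeping is identical to the two-leg case.
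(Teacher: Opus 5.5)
The step you call ``the only genuinely delicate point'' is fatal, not delicate. The proposition is false when the removed edges are chosen arbitrarily, so neither of your repairs can work.

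\textbf{Counterexample.} Take $k=2$, $m=2$, $\sigma_1=\sigma_2=(1\,2)$. Write $g_{12},g_{21}$ for the $col_1$-edges $A_1\to A_2$, $A_2\to A_1$, and $r_{12},r_{21}$ for the $col_2$-edges.
\begin{itemize}
\item With straight blue edges in both rectangles, the cycles are $\{g_{12},g_{21}\}$ and $\{r_{12},r_{21}\}$.
\item With crossed blue edges in both rectangles, the cycles are $\{g_{12},r_{21}\}$ and $\{r_{12},g_{21}\}$.
\item Mixed choices give a single cycle.
\end{itemize}
So $M=2$, attained by the straight configuration. The statement allows you to remove $g_{21}$ from the first cycle and $r_{12}$ from the second. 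Then $G'$ keeps $g_{12}\colon A_1\to A_2$ and $r_{21}\colon A_2\to A_1$. With crossed blue edges in both rectangles, these two edges form a directed cycle, so $G'$ is not simple. The same example refutes Proposition \ref{Prop-upper-lower}. The theorem itself is not contradicted here: removing $g_{21}$ and $r_{21}$ instead does give a full simple partial graph.

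\textbf{Where your argument breaks.} Run your interpolation on this example.
\begin{itemize}
\item After swapping at $A_1$, one path of $G'$ runs from the open $col_2$ in-vertex of $A_2$, through $r_{21}$, the crossed blue edge of $A_1$, and $g_{12}$, to the open $col_1$ out-vertex of $A_2$. Both blue edges of $A_2$ now lie on this one path, so (I) fails exactly as you feared.
\item Your compensating swap at $A_2$ gives the doubly crossed configuration. It has $2=M$ cycles in $G_{\sigma_1,\sigma_2}$, not $M+1$, so maximality yields no contradiction.
\item No ordering of moves can help, because the target configuration itself contains a cycle of $G'$.
\item The rerouting fallback fails for the same reason. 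The cycle $C$ visits both rectangles, so re-using $\pi$ there just reproduces a configuration with $M$ cycles.
\end{itemize}

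\textbf{Comparison with the paper.} The paper's sketch has the same gap. It asserts that after changing one rectangle, the blue edges of \emph{every} rectangle still lie on different paths. The swap only controls the rectangle being changed.

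\textbf{What would be needed.} A partial graph is simple if and only if the multidigraph on rectangles formed by its retained coloured edges (loops included) is acyclic. This is because the construction in Lemma \ref{Lem-multi-simple} realises any cycle of rectangles by blue edges. So the upper bound needs a deliberately chosen removal set: a feedback arc set of that multidigraph of size $M(\sigma_1,\ldots,\sigma_k)$, for example backward edges for a suitable ordering. That is a global min--max property, which maximality plus local swaps does not supply.
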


\begin{proof}
The proof is similar to that of Proposition \ref{Prop-upper-lower}, and is sketched below.

We start with the connection of blue directed edges such that the number of directed cycles in the $G_{\sigma_1, \ldots, \sigma_k}$ is $M(\sigma_1, \ldots, \sigma_k)$.
We fix the connection of blue directed edges and consider the partial graph $G'_{\sigma_1, \ldots, \sigma_k}$.
We interpolate this connection of blue directed edges with an arbitrary connection of blue directed edges in the partial graph $G'_{\sigma_1, \ldots, \sigma_k}$ by changing the connection of blue directed edges in the rectangles one by one.

With the help of Lemma \ref{Lem-multi-upper-lower}, at the initial state of the blue directed edges, for any rectangle in $G'_{\sigma_1, \ldots, \sigma_k}$, its blue directed edges belong to different paths.
Any changing the connection of blue directed edges in this rectangle, its blue directed edges still belong to different paths. We repeat this argument to deduce that, during each step of the interpolation procedure, the blue directed edges of the same rectangle belong to different paths. In particular, there is no directed cycle in the graph during each step of the interpolation procedure, noting that each directed cycle in the graph must contain blue directed edges.
Therefore, the partial graph $G'_{\sigma_1, \ldots, \sigma_k}$ is a full simple partial graph.
\end{proof}

Now we are ready to finish the proof. On the one hand, for the connection of blue directed edges in $G_{\sigma_1, \ldots, \sigma_k}$ such that the number of directed cycles is $M(\sigma_1, \ldots, \sigma_k)$, we remove one directed edge with arbitrary color $col_j$ from each directed cycle to obtain a partial graph $\tilde G'_{\sigma_1, \ldots, \sigma_k}$. By Proposition \ref{Prop-multi-upper-lower}, $\tilde G'_{\sigma_1, \ldots, \sigma_k}$ is a full simple partial graph of $G_{\sigma_1, \ldots, \sigma_k}$. Hence, the number of directed edges in its complement partial graph is $R_{\tilde G'_{\sigma_1, \ldots, \sigma_k}} = M(\sigma_1, \ldots, \sigma_k)$. Hence, we have
\begin{align*}
    N^{M(\sigma_1, \ldots, \sigma_k)} = N^{R_{\tilde G'_{\sigma_1, \ldots, \sigma_k}}}
    \ge \min_{G'_{\sigma_1, \ldots, \sigma_k}} N^{R_{G'_{\sigma_1, \ldots, \sigma_k}}}.
\end{align*}
On the other hand, by \eqref{ineq-multi-upper} and \eqref{eq-multi-lower}, we obtain 
\begin{align*}
    &N^{M(\sigma_1,\ldots,\sigma_k)}
    = \max_{A_1,\ldots,A_m \in \{U_\pi:\pi \in \cP([k])\}} \left| \left( \Tr_{\sigma_1} \otimes \ldots \otimes \Tr_{\sigma_k} \right) \left( A_1, \ldots, A_m \right) \right| \\
    &\le \max_{A_1,\ldots,A_m} \left| \left( \Tr_{\sigma_1} \otimes \ldots \otimes \Tr_{\sigma_k} \right) \left( A_1, \ldots, A_m \right) \right|
    \le \min_{G'_{\sigma_1, \ldots, \sigma_k}} N^{R_{G'_{\sigma_1, \ldots, \sigma_k}}},
\end{align*}
where $\max_{A_1,\ldots,A_m}$ is the maximum taken among all unitary matrices $A_1, \ldots, A_m \in M_N (\bC)^{\otimes k}$.
Thus, combining the two inequalities, we get the following.
\begin{align*}
    \max_{A_1,\ldots,A_m} \left| \left( \Tr_{\sigma_1} \otimes \ldots \otimes \Tr_{\sigma_k} \right) \left( A_1, \ldots, A_m \right) \right|
    = N^{M(\sigma_1,\ldots,\sigma_k)}.
\end{align*}
Therefore, the proof is concluded by the fact that the set of matrices whose norm does not exceed 1 is a convex combination of unitary matrices.

\section{Proof of Theorem \ref{Thm-main-matrix}} \label{sec:matrix}

This section is devoted to the proof of Theorem \ref{Thm-main-matrix}. The key to the proof is to count the maximal number of directed cycles in the graph $G^{(p)} (\sigma_1,\ldots,\sigma_k)$ among all possibilities of the blue directed edges.

The following lemma provides a property of directed cycles in the graph $G^{(p)}_{\sigma_1, \ldots, \sigma_k}$, which helps us to count the number of directed cycles later.

\begin{lemma} \label{Lem-moment-cycle}
Let $m,k,p \in \bN$, and let $\sigma_1, \ldots, \sigma_k \in \cP'([m])$ be any partial permutations. For any connection of blue directed edges in the graph $G^{(p)}_{\sigma_1, \ldots, \sigma_k}$, any directed cycle must contain at least $2p$ yellow directed edges or must not contain any yellow directed edges.
\end{lemma}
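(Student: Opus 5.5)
The plan is to track, along a directed cycle, which of the $2p$ "blocks" of $G^{(p)}_{\sigma_1,\ldots,\sigma_k}$ the current vertex lies in. Recall that $G^{(p)}$ is built from the copies ${}_1G, {}_1G^*, {}_2G, {}_2G^*, \ldots, {}_pG, {}_pG^*$. Every non-yellow edge (colored edge from some $\sigma_j$ or $\sigma_j^{-1}$) joins two rectangles of the same copy. Every blue edge stays inside a single rectangle, hence inside a single copy. Only the yellow edges move between copies.

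\textbf{Step 1: yellow edges advance the copy index cyclically.} By the construction in Section \ref{sec:graph-moment}, every yellow edge goes from ${}_rG$ to ${}_rG^*$, or from ${}_rG^*$ to ${}_{r+1}G$, with indices mod $p$. Label the copies cyclically by $\mathbb{Z}/2p\mathbb{Z}$ in the order ${}_1G \mapsto 0$, ${}_1G^* \mapsto 1$, ${}_2G \mapsto 2$, and so on. Then traversing any yellow edge in its orientation increases the label by exactly $1$. Traversing any blue or colored edge leaves the label unchanged.

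\textbf{Step 2: count yellow edges in a closed cycle.} Take a directed cycle $C$ and follow it once around, returning to the starting vertex. The total change of the label is (number of yellow edges in $C$) mod $2p$, and it must be $0$. So the number $y$ of yellow edges in $C$ satisfies $y\equiv 0 \pmod{2p}$. Hence either $y=0$ or $y\ge 2p$, which is the claim.

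\textbf{Main point to check.} The only thing to verify carefully is the orientation convention. Every yellow edge is oriented "forward" in the cyclic order: from an open out-vertex of ${}_rG$ to an open in-vertex of ${}_rG^*$, and from an open out-vertex of ${}_rG^*$ to an open in-vertex of ${}_{r+1}G$. No yellow edge goes backwards. Directed cycles traverse each edge in its given orientation, so there are no cancellations.

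This follows directly from the definitions, including the convention ${}_{p+1}G={}_1G$. Blue edges never leave a rectangle, so no other edge type crosses copies. I expect no real obstacle beyond stating this labeling cleanly.
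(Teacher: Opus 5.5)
Your proof is correct and follows the same idea as the paper. Only yellow edges move between the copies, and always one step forward in the cyclic order ${}_1G \to {}_1G^* \to {}_2G \to \cdots \to {}_pG^* \to {}_1G$, so a directed cycle either stays in one copy or goes all the way around. Your $\mathbb{Z}/2p\mathbb{Z}$ labeling makes the paper's ``visits all partial graphs'' step quantitative, and gives the slightly sharper fact that the number of yellow edges in any directed cycle is a multiple of $2p$.
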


\begin{proof}
We observe that only yellow directed edges connect different partial graphs among $_rG_{\sigma_1, \ldots, \sigma_k}(A_1, \ldots, A_m)$ and $_rG^*_{\sigma_1, \ldots, \sigma_k}(A_1^*,\ldots,A_m^*)$, $\forall 1 \le r \le p$.
Moreover, a yellow directed edge is from an out-vertex in the partial graph $_rG_{\sigma_1, \ldots, \sigma_k}(A_1, \ldots, A_m)$ to an in-vertex in the $_rG^*_{\sigma_1, \ldots, \sigma_k}(A_1^*,\ldots,A_m^*)$, or from an out-vertex in the partial graph $_rG^*_{\sigma_1, \ldots, \sigma_k}(A_1^*,\ldots,A_m^*)$ to an in-vertex in the $_{r+1}G_{\sigma_1, \ldots, \sigma_k}(A_1, \ldots, A_m)$, for some $1 \le r \le p$.
Thus, for any directed cycle in the graph $G^{(p)}_{\sigma_1, \ldots, \sigma_k}(A_1, \ldots, A_m)$, if we travel along the directed cycle with its orientation, one of the following two cases holds:
\begin{enumerate}
    \item we stay in the same partial graph $_rG_{\sigma_1, \ldots, \sigma_k} (A_1, \ldots, A_m)$ or $_rG^*_{\sigma_1, \ldots, \sigma_k} (A_1^*, \ldots, A_m^*)$ for some $1 \le r \le p$,
    \item we visit all partial graphs $_rG_{\sigma_1, \ldots, \sigma_k}(A_1, \ldots, A_m)$ and $_rG^*_{\sigma_1, \ldots, \sigma_k} (A_1^*, \ldots, A_m^*)$ for all $1 \le r \le p$.
\end{enumerate}
The proof is concluded.
\end{proof}

Now we are ready to deduce the maximal number of directed cycles in the graph $G^{(p)}_{\sigma_1, \ldots, \sigma_k} (A_1, \ldots, A_m)$ among all connections of blue directed edges.

\begin{proposition} \label{Prop-moment-cycle}
Let $m,k,p \in \bN$, and let $\sigma_1, \ldots, \sigma_k \in \cP'([m])$ be any partial permutations. Among all the connections of the blue directed edges in the graph $G^{(p)}_{\sigma_1, \ldots, \sigma_k} (A_1, \ldots, A_m)$, the maximum number of directed cycles is $2p M(\sigma_1,\ldots,\sigma_k) + \sum_{j=1}^k (m-D(\sigma_j))$.
\end{proposition}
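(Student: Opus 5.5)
The plan is to prove matching upper and lower bounds on the number of directed cycles in $G^{(p)}_{\sigma_1,\ldots,\sigma_k}$. Throughout, write $S=\sum_{j=1}^k (m-D(\sigma_j))$ for the number of open in-vertices (equivalently, open out-vertices) of a single copy of $G_{\sigma_1,\ldots,\sigma_k}$. Here $M(\sigma_1,\ldots,\sigma_k)$ counts only the closed directed cycles of $G_{\sigma_1,\ldots,\sigma_k}$. For any fixed blue configuration, the remaining edges form maximal directed paths, and each such path starts at an open in-vertex and ends at an open out-vertex. There are exactly $S$ of these paths.

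\textbf{Preliminary step: cycles in $G^*$.} I would first check that the bijection of blue configurations between $G_{\sigma_1,\ldots,\sigma_k}$ and $G^*_{\sigma_1,\ldots,\sigma_k}$ reverses every edge. A blue edge from in $col_{j_1}$ to out $col_{j_2}$ becomes one from in $col_{j_2}$ to out $col_{j_1}$, and each colored edge $A_i\to A_{\sigma_j(i)}$ becomes $A^*_{\sigma_j(i)}\to A^*_i$. Hence closed cycles of $G^*$ are exactly the reversed cycles of $G$. Likewise, a path in $G$ from the open in-vertex $(A_i,col_j)$ to the open out-vertex $(A_{i'},col_l)$ corresponds to a path in $G^*$ from the open in-vertex $(A^*_{i'},col_l)$ to the open out-vertex $(A^*_i,col_j)$. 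In particular, the maximal number of closed cycles in $G^*$ is also $M(\sigma_1,\ldots,\sigma_k)$.

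\textbf{Upper bound.} Fix any blue configuration on $G^{(p)}$. By Lemma \ref{Lem-moment-cycle}, every directed cycle falls into one of two types.
\begin{enumerate}
\item It avoids yellow edges entirely. It is then a closed cycle inside a single copy ${}_rG$ or ${}_rG^*$ with that copy's blue configuration, so each of the $2p$ copies contributes at most $M(\sigma_1,\ldots,\sigma_k)$ such cycles.
\item It uses at least $2p$ yellow edges. Since the total number of yellow edges is $2pS$ and cycles are edge-disjoint, there are at most $S$ cycles of this type.
\end{enumerate}
Summing the two types gives at most $2pM(\sigma_1,\ldots,\sigma_k)+S$ cycles.

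\textbf{Lower bound.} Choose a blue configuration on $G_{\sigma_1,\ldots,\sigma_k}$ achieving $M(\sigma_1,\ldots,\sigma_k)$ closed cycles. Put it on every ${}_rG$, and put the corresponding configuration on every ${}_rG^*$. This gives $2pM(\sigma_1,\ldots,\sigma_k)$ internal cycles. The remaining edges are organized as follows.
\begin{itemize}
\item Consider a path $P$ in ${}_rG$ from the open in-vertex $(A_i,col_j)$ to the open out-vertex $(A_{i'},col_l)$.
\item A yellow edge sends its end to the open in-vertex $(A^*_{i'},col_l)$ in ${}_rG^*$.
\item The reversed path $P^*$ leads to the open out-vertex $(A^*_i,col_j)$.
\item A yellow edge then goes to $(A_i,col_j)$ in ${}_{r+1}G$, which is the start of the copy of $P$ in ${}_{r+1}G$.
\end{itemize}
Following this through all $p$ rounds closes up, with $p+1$ identified with $1$. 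Each of the $S$ paths $P$ therefore yields exactly one cycle, and these $S$ cycles are distinct and use all yellow edges. The total is $2pM(\sigma_1,\ldots,\sigma_k)+S$, which matches the upper bound.

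The main point needing care is the preliminary step: verifying that the blue-edge correspondence maps paths and cycles of $G$ exactly to reversed paths and cycles of $G^*$, with the open endpoints matched as stated. This is what makes the yellow edges chain $P$ and $P^*$ together rather than scrambling different paths.
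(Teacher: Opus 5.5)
Your proposal is correct and follows essentially the same route as the paper. For the upper bound, yellow-free cycles number at most $M(\sigma_1,\ldots,\sigma_k)$ per copy ${}_rG$ or ${}_rG^*$, and cycles containing yellow edges number at most $\sum_{j=1}^k (m-D(\sigma_j))$ by Lemma \ref{Lem-moment-cycle} and a count of yellow edges. For the lower bound, you duplicate an optimal blue configuration and chain each open path of ${}_rG$ with its reversed counterpart in ${}_rG^*$ through yellow edges. Your explicit check that the blue-edge correspondence makes $G^*_{\sigma_1,\ldots,\sigma_k}$ the edge-reversal of $G_{\sigma_1,\ldots,\sigma_k}$, with open endpoints swapped, makes precise what the paper only asserts ``by construction''; it also justifies that each ${}_rG^*$ has at most $M(\sigma_1,\ldots,\sigma_k)$ cycles.
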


\begin{proof}
Let us fix the connection of the blue directed edges in the graph $G^{(p)}_{\sigma_1, \ldots, \sigma_k} (A_1, \ldots, A_m)$ for all rectangles so that the number of directed cycles is maximal.

We first count the number of directed cycles that do not contain any yellow directed edges. This kind of directed cycle belongs to the partial graph $_rG_{\sigma_1, \ldots, \sigma_k} (A_1, \ldots, A_m)$ for some $1 \le r \le p$, or it belongs to $_rG^*_{\sigma_1, \ldots, \sigma_k} (A_1^*, \ldots, A_m^*)$ for some $1 \le r \le p$.
Note that for any $1 \le r \le p$, the partial graph $_rG_{\sigma_1, \ldots, \sigma_k} (A_1, \ldots, A_m)$ has at most $M(\sigma_1, \ldots, \sigma_k)$ directed cycles among all possible connections of blue directed edges, so is the partial graph $_rG^*_{\sigma_1, \ldots, \sigma_k} (A_1^*, \ldots, A_m^*)$. Hence, the number of directed cycles without any yellow directed edges is at most $2p M(\sigma_1, \ldots, \sigma_k)$.

Next, we count the number of directed cycles that contain yellow directed edges. By Lemma \ref{Lem-moment-cycle}, such directed cycles have at least $2p$ yellow directed edges. Recall that the number of yellow directed edges in the graph $G^{(p)}_{\sigma_1, \ldots, \sigma_k} (A_1, \ldots, A_m)$ is $2p \sum_{j=1}^k (m - D(\sigma_j))$. Thus, the number of directed cycles with yellow directed edges is at most $\sum_{j=1}^k (m - D(\sigma_j))$.

Therefore, the total number of directed cycles in the graph $G^{(p)}_{\sigma_1, \ldots, \sigma_k} (A_1, \ldots, A_m)$ with the aforementioned connection of blue directed edges is at most $2p M(\sigma_1, \ldots, \sigma_k) + \sum_{j=1}^k (m - D(\sigma_j))$.

It remains to prove that the upper bound can be obtained with a special connection of blue directed edges.

We first start with the graph $G_{\sigma_1, \ldots, \sigma_k} (A_1, \ldots, A_m)$ and connect the blue directed edges of all rectangles so that the number of directed cycles in $G_{\sigma_1, \ldots, \sigma_k} (A_1, \ldots, A_m)$ is $M(\sigma_1, \ldots, \sigma_k)$.
For the graph $G^*_{\sigma_1, \ldots, \sigma_k} (A_1^*, \ldots, A_m^*)$, we can connect the blue directed edges of all rectangles that corresponds to the connection of the blue directed edges in the graph $G_{\sigma_1, \ldots, \sigma_k} (A_1, \ldots, A_m)$.
For $1 \le r \le p$, since $_rG_{\sigma_1, \ldots, \sigma_k} (A_1, \ldots, A_m)$ is a duplication of $G_{\sigma_1, \ldots, \sigma_k} (A_1, \ldots, A_m)$, we connect the blue directed edges in all rectangles in the partial graph $_rG_{\sigma_1, \ldots, \sigma_k} (A_1, \ldots, A_m)$ in the same way as the connection of the blue directed edges in $G_{\sigma_1, \ldots, \sigma_k} (A_1, \ldots, A_m)$.
Similarly, for $1 \le r \le p$, we connect the blue directed edges of all rectangles in the graph $_rG^*_{\sigma_1, \ldots, \sigma_k} (A_1^*, \ldots, A_m^*)$ in the same way as the connection of blue directed edges in $G^*_{\sigma_1, \ldots, \sigma_k} (A_1^*, \ldots, A_m^*)$.

We fix this connection of blue directed edges and we claim that with this connection of blue directed edges in the rectangles in the graph $G^{(p)}_{\sigma_1, \ldots, \sigma_k} (A_1, \ldots, A_m)$, the number of directed cycles in the graph $G^{(p)}_{\sigma_1, \ldots, \sigma_k} (A_1, \ldots, A_m)$ is exactly $2p M(\sigma_1, \ldots, \sigma_k) + \sum_{j=1}^k (m - D(\sigma_j))$.
Let us elaborate on this claim.

With our choice of connection of blue directed edges, the graph $G_{\sigma_1, \ldots, \sigma_k} (A_1, \ldots, A_m)$ consists of directed cycles and paths from open in-vertices to open out-vertices.
By construction, we can see that any directed cycle in the graph $_rG_{\sigma_1, \ldots, \sigma_k} (A_1, \ldots, A_m)$ or $_rG^*_{\sigma_1, \ldots, \sigma_k} (A_1^*, \ldots, A_m^*)$ is also a directed cycle in the graph $G^{(p)}_{\sigma_1, \ldots, \sigma_k} (A_1, \ldots, A_m)$.
Thus, the number of directed cycles in the graph $G^{(p)}_{\sigma_1, \ldots, \sigma_k} (A_1, \ldots, A_m)$ without yellow directed edges is $2p M(\sigma_1, \ldots, \sigma_k)$.

For any directed path in $G_{\sigma_1, \ldots, \sigma_k} (A_1, \ldots, A_m)$, it starts from an open in-vertex and ends at an open out-vertex.
Without loss of generality, we assume that the open in-vertex is on the rectangle $A_{i_1}$ with color $col_{j_1}$, and the open out-vertex is on the rectangle $A_{i_2}$ with color $col_{j_2}$, where $i_1, i_2 \in [m]$ and $j_1,j_2 \in [k]$.

For $1 \le r \le p$, we find this directed path in the partial graphs $_rG_{\sigma_1, \ldots, \sigma_k} (A_1, \ldots, A_m)$ and $_{r+1}G_{\sigma_1, \ldots, \sigma_k} (A_1, \ldots, A_m)$. By construction of the graph $_rG^*_{\sigma_1, \ldots, \sigma_k} (A_1^*, \ldots, A_m^*)$, there is another directed path in the partial graph $_rG^*_{\sigma_1, \ldots, \sigma_k} (A_1^*, \ldots, A_m^*)$ from the open in-vertex with color $col_{j_2}$ of the rectangle $A_{i_2}^*$ to the open out-vertex with color $col_{j_1}$ on the rectangle $A_{i_1}^*$.
By the construction of the graph $G^{(p)}_{\sigma_1, \ldots, \sigma_k} (A_1, \ldots, A_m)$, there is a yellow directed edge from the open out-vertex with color $col_{j_2}$ on the rectangle $A_{i_2}$ in the graph $_rG_{\sigma_1, \ldots, \sigma_k} (A_1, \ldots, A_m)$ to the open in-vertex with color $col_{j_2}$ on the rectangle $A_{i_2}^*$ in the graph $_rG^*_{\sigma_1, \ldots, \sigma_k} (A_1^*, \ldots, A_m^*)$. This yellow directed edge connects the directed path in $_rG_{\sigma_1, \ldots, \sigma_k} (A_1, \ldots, A_m)$ with the directed path in $_rG^*_{\sigma_1, \ldots, \sigma_k} (A_1^*, \ldots, A_m^*)$.
Similarly, there are yellow directed edges from the open out-vertex with color $col_{j_1}$ on the rectangle $A_{i_1}^*$ in the graph $_rG^*_{\sigma_1, \ldots, \sigma_k} (A_1^*, \ldots, A_m^*)$ to the open in-vertex with color $col_{j_1}$ on the rectangle $A_{i_1}$ in the graph $_{r+1}G_{\sigma_1, \ldots, \sigma_k} (A_1, \ldots, A_m)$. This yellow directed edge connects the directed path in $_rG^*_{\sigma_1, \ldots, \sigma_k} (A_1^*, \ldots, A_m^*)$ with the directed path in $_{r+1}G_{\sigma_1, \ldots, \sigma_k} (A_1, \ldots, A_m)$.
Hence, we can see that for the directed path in the graph $G_{\sigma_1, \ldots, \sigma_k} (A_1, \ldots, A_m)$, its duplications in all graphs $_rG_{\sigma_1, \ldots, \sigma_k} (A_1, \ldots, A_m)$ and the corresponding directed path in all graphs $_rG^*_{\sigma_1, \ldots, \sigma_k} (A_1^*, \ldots, A_m^*)$, together with $2p$ yellow directed edges, form a directed cycle.
We provide an example of the directed paths forming directed cycles with yellow directed edges in Figure \ref{figure-power-path} below.

\begin{figure}[ht]
    \centering
    \includegraphics[scale=0.5]{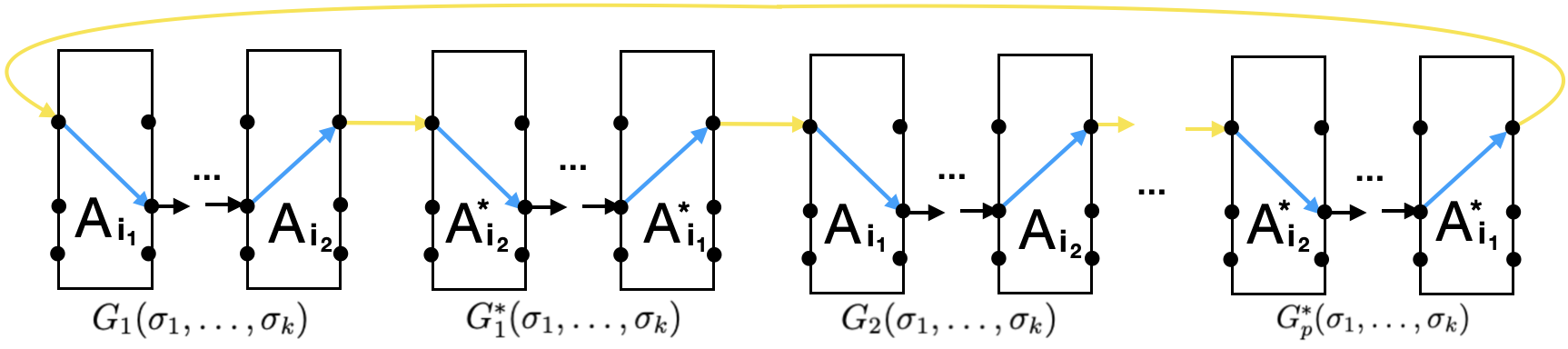}
    \caption{Directed path in $G^{(p)}_{\sigma_1, \ldots, \sigma_k} (A_1, \ldots, A_m)$}
    \label{figure-power-path}
\end{figure}

As the directed path is arbitrary in the graph $G_{\sigma_1, \ldots, \sigma_k} (A_1, \ldots, A_m)$, and such directed cycles contain exactly $2p$ yellow directed edges, we can conclude that each directed cycle in the graph $G^{(p)}_{\sigma_1, \ldots, \sigma_k} (A_1, \ldots, A_m)$ that contains yellow directed edge must contain exactly $2p$ yellow directed edges. In particular, the number of directed cycles in $G^{(p)}_{\sigma_1, \ldots, \sigma_k} (A_1, \ldots, A_m)$ that contain yellow directed edge is $\sum_{j=1}^k (m - D(\sigma_j))$.

Therefore, with our choice of blue directed edges in the graph $G^{(p)}_{\sigma_1, \ldots, \sigma_k} (A_1, \ldots, A_m)$, the total number of directed cycles attains the maximum $2p M(\sigma_1, \ldots, \sigma_k) + \sum_{j=1}^k (m - D(\sigma_j))$.
\end{proof}

The proof of \eqref{eq-tr-YY*^p} follows immediately from Proposition \ref{Prop-moment-cycle} and Theorem \ref{Thm-main-multi}.
Moreover, our choice of blue directed edges in $G_{\sigma_1, \ldots, \sigma_k} (A_1, \ldots, A_m)$ does not depend on $p$.
Hence, by claim \eqref{eq-multi-U}, the matrices $A_1,\ldots,A_m$ that maximize $|\Tr ((YY^*)^p)|$ can be chosen from the set $\{U_\pi: \pi \in \cP([k])\}$ independently of $p$.

\section{Proof of corollaries} \label{sec:proof-coro}

In this section, we develop the proof of the corollaries in Section \ref{sec:results}.

\begin{proof} (of Corollary \ref{Coro-main-2 leg})
The corollary follows directly from Theorem \ref{Thm-main} and the fact that the set of matrices whose norm does not exceed 1 is a convex hull of the set of unitary matrices.
\end{proof}

\begin{proof} (of Corollary \ref{Coro-2 leg-backward})
Note that the number of backward edges in $G_{\sigma_1,\sigma_2}$ is $R(\sigma)+R(\tau)$.
The proof is concluded by Corollary \ref{Coro-main-2 leg} together with the fact that every directed cycle in $G_{\sigma_1,\sigma_2}$ must contain at least one backward directed edge.
\end{proof}

\begin{proof} (of Corollary \ref{Coro-main-multi-2 leg})
The proof is a direct application of Theorem \ref{Thm-main-multi}, noting that $M_{N^a}(\bC) \simeq M_N(\bC)^{\otimes a}$ and $M_{N^b}(\bC) \simeq M_N(\bC)^{\otimes b}$.
\end{proof}

\begin{proof} (of Corollary \ref{Coro-main-multi leg})
The proof is also a direct application of Theorem \ref{Thm-main-multi}, and we omit the details.
\end{proof}

\begin{proof} (of Corollary \ref{Coro-multi-backward})
By Corollary \ref{Coro-2 leg-backward}, we have
\begin{align*}
    \max_{\|A_1\|,\ldots,\|A_m\| \le 1} \left| \left( \Tr_{\sigma} \otimes \Tr_{\gamma} \otimes \ldots \otimes \Tr_{\gamma} \right) \left( A_1, \ldots, A_m \right) \right|
    \le N^{R(\sigma)+k-1}.
\end{align*}
It remains to prove that the upper bound can be attained.

We set $\sigma_1 = \sigma$ and $\sigma_2 = \ldots = \sigma_k = \gamma$. Then we can associate the partial trace $(\Tr_{\sigma} \otimes \Tr_{\gamma} \otimes \ldots \otimes \Tr_{\gamma}) (A_1, \ldots, A_m)$ with the graph $G_{\sigma_1,\ldots,\sigma_k}$ given in Section \ref{sec:graph-multi leg}.
According to claim \eqref{eq-multi-U}, it is enough to find an appropriate connection of blue directed edges in the graph $G_{\sigma_1,\ldots,\sigma_k}$ such that each directed cycle contains exactly one backward directed edge.

Now we connect the blue directed edges in the following steps.

\noindent{\textbf{Step 1.}}
We first order all backward directed edges of color $col_1$ arbitrarily.
For all $1 \le l \le R(\sigma)$, we consider the $l$-th backward directed edge of color $col_1$, and assume that its out-vertex is on the rectangle $A_j$ while its in-vertex is on the rectangle $A_i$, for some $1 \le i \le j \le m$.
In rectangle $A_i$, we connect the in-vertex of the color $col_1$ with the out-vertex of the color $col_{l+1}$.
In rectangle $A_j$, we connect the in-vertex of the color $col_{l+1}$ with the out-vertex of the color $col_1$.
We provide an example of such a connection in Figure \ref{figure-multi-blue-R}.

\begin{figure}[ht]
    \centering
    \includegraphics[scale=0.5]{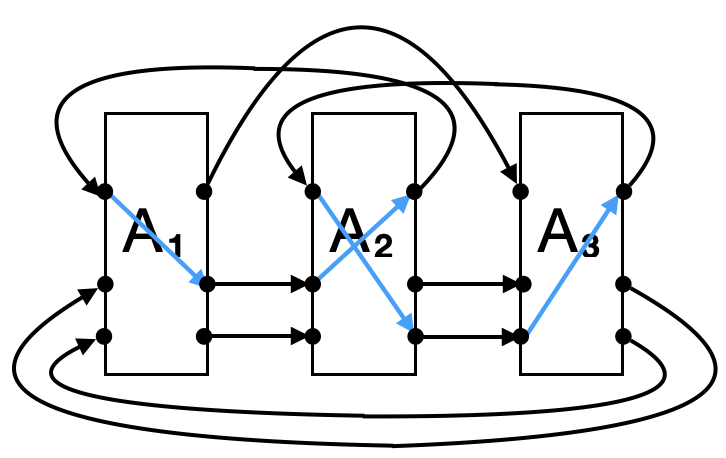}
    \caption{Blue directed edges for backward directed edges of $\sigma$ with $m=k=3$, $\sigma = (321)$ and $\gamma=(123)$}
    \label{figure-multi-blue-R}
\end{figure}

\noindent{\textbf{Step 2.}}
For the remaining blue directed edges, we connect them in the way that the number of directed cycles in the graph $G_{\sigma_1,\ldots,\sigma_k}$ is maximized.

Now we prove that with this connection of blue directed edges, every directed cycle contains exactly one backward directed edge.
By construction, the backward directed edges of color $col_1$ are the only backward directed edges in its directed cycles.
The remaining backward directed edges are from rectangle $A_m$ to $A_1$ of color $col_j$ for $2 \le j \le k$.
We claim that each of them belongs to a different directed cycle.
Indeed, if two of them belong to the same directed cycle, then there are two blue directed edges in rectangle $A_m$ that belong to this directed cycle. By the argument of Lemma \ref{Lem-upper-lower} or Lemma \ref{Lem-multi-upper-lower}, changing the two blue directed edges leads to an increase on the total number of directed cycles by one. This contradicts the fact that the number of directed cycles is maximal.

The proof is concluded.
\end{proof}

\begin{remark} \label{Rmk-k-proof}
We would like to point out that the condition $k \ge m+1$ can be weakened.

First of all, for all loops of $\sigma$, we connect the blue directed edges of the in-vertex and out-vertex of color $col_1$. It results in a directed cycle containing only one backward edge.
Second, for the backward directed edges of $\sigma$ from $A_j$ to $A_i$ for some $1 \le i<j \le m$, we associate the interval $(i,j]$ to the backward edge.
If the intervals of different backward directed edges are disjoint, then we can use the same leg to connect the blue directed edges in Step 1.
In addition, in Step 1, we do not need to consider the backward directed edge of color $col_1$ whose out-vertex is on $A_m$.
Indeed, in this case, after Step 1, all the remaining backward directed edges are out-edges of $A_m$, and hence they will be in different directed cycles if we pair the remaining in-vertices and out-vertices of the same rectangles with blue directed edges that maximize the number of directed cycles.

Therefore, we conclude the proof of Remark \ref{Rmk-k}.
\end{remark}

\begin{proof} (of Corollary \ref{Coro-operator norm})
On the one hand, by Theorem \ref{Thm-main-matrix}, for any matrices $A_1,\ldots,A_m \in M_N(\bC)^{\otimes k}$ with norms not exceeding 1, we have
\begin{align*}
    \|Y\|^{2p}
    \le \left| \Tr \left( (YY^*)^p \right) \right|
    \le N^{2p M(\sigma_1,\ldots,\sigma_k) + \sum_{j=1}^k (m-D(\sigma_j))},
\end{align*}
which leads to
\begin{align*}
    \|Y\|
    \le N^{M(\sigma_1,\ldots,\sigma_k) + \frac{1}{2p} \sum_{j=1}^k (m-D(\sigma_j))}
\end{align*}
Letting $p \to \infty$, we obtain
\begin{align} \label{ineq-norm-upper}
    \|Y\| \le N^{M(\sigma_1,\ldots,\sigma_k)}.
\end{align}

On the other hand, by Theorem \ref{Thm-main-matrix}, we can choose the matrices $A_1,\ldots,A_m \in M_N(\bC)^{\otimes k}$ whose norms do not exceed $1$ and which do not depend on $p$, such that $|\Tr ((YY^*)^p)|$ reaches its maximum. Note that $Y$ is a matrix in
\begin{align*}
    M_{N^{m-|D(\sigma_1)|}}(\bC) \otimes \ldots \otimes M_{N^{m-|D(\sigma_k)|}}(\bC) \simeq M_{N^{\sum_{j=1}^k (m-D(\sigma_j))}}(\bC),
\end{align*}
by Theorem \ref{Thm-main-matrix}, we have
\begin{align*}
    N^{\sum_{j=1}^k (m-D(\sigma_j))} \|Y\|^{2p}
    \ge \left| \Tr \left( (YY^*)^p \right) \right|
    = N^{2p M(\sigma_1,\ldots,\sigma_k) + \sum_{j=1}^k (m-D(\sigma_j))}.
\end{align*}
Thus, we have
\begin{align} \label{ineq-norm-lower}
    \|Y\|
    \ge N^{M(\sigma_1,\ldots,\sigma_k)}
\end{align}
The proof is concluded by combining \eqref{ineq-norm-upper} and \eqref{ineq-norm-lower}.
\end{proof}

\section{Application to random matrix theory} \label{sec:application}

\subsection{Uniform asymptotic freeness with matrix coefficients?}

A fundamental result of multi-matrix random matrix theory is asymptotic freeness of Voiculescu, which we reformulate as follows. 

Let $A_1, \ldots, A_m$ and $B_1, \ldots, B_m$ be two $m$-tuples of matrices in $M_N(\bC)$, and assume that these two sequences (indexed by a dimension $N$) of $m$-tuples have a limit in moments, in the sense that
for any $l$, any words $i_1,\ldots ,i_l\in \{1,\ldots , m\}$, 
$\lim_N\tr (A_{i_1}\ldots A_{i_l})$ exists in $\mathbb{C}$ (resp. for the $m$-tuple $B$).
Then, with probability $1$ as $N$ tends to infinity, the same result (of the existence of a joint limit distribution) holds for the
$2m$-tuple $A_1, \ldots, A_m, UB_1U^*, \ldots, UB_mU^*$, where $U$ is a Haar distributed unitary matrix (independent from $A_1, \ldots, A_m$ and $B_1, \ldots, B_m$, if they were random). This limit is governed by freeness. 

A slightly more general -- and more technical -- version of this result is: call $\tilde B_i$ the random matrices $UB_iU^*$ (in the same matrix space $M_N(\bC)$), and $\hat B_i$ a version of $B_i$ in a reduced free product of $M_N(\bC)$ with itself (with respect to the reduced trace). 
More specifically, we call $\mathcal{A}_1$ and $\mathcal{A}_2$ two copies of $M_N(\bC)$ and we create the reduced free product $\mathcal{A}_1 * \mathcal{A}_2$ with respect to the normalized trace. Then, we can see $A_i$ as elements of $\mathcal{A}_1$ and $\hat B_i$ as elements of $\mathcal{A}_2$.
In this setting, we assume that all $A_i, B_i$ have an operator norm bounded by $1$ uniformly. 
There exists a constant $C=C(l)>0$ depending only on $l$ such that, for $N$  large enough  
$$\sup_{||A_1||, \ldots, ||A_m||,||B_1||, \ldots, ||B_m||\le 1} 
\left| \bE \left[ \tr (A_{i_1}\tilde B_{j_1}\ldots A_{i_l}\tilde B_{j_l}) \right] -\tau (A_{i_1}\hat B_{j_1}\ldots A_{i_l}\hat B_{j_l}) \right| \le CN^{-2}.$$

In this section, our goal is to investigate to which extent we can extend this theorem when we allow matrix coefficients. 
For that purpose, we introduce the following notation. 
Let $A_1', \ldots, A_m'$ and $B_1', \ldots, B_m'$ be two $m$-tuples of matrices in $M_n(\bC)\otimes M_p(\bC)$ and we assume that they are all of norm less than $1$.

We adapt the above notations as follows: 
call $\tilde B_i$ the random matrices $(U \otimes I_p) \cdot B_i' \cdot (U^* \otimes I_p)$. It lives in the same matrix space $\mathcal{A}_1 \otimes M_p(\bC) = M_n(\bC) \otimes M_p(\bC)$, and we call $\hat B_i$ a version of $B_i'$ in $\mathcal{A}_2 \otimes M_p(\bC)$, where both are viewed as subalgebras of $(\mathcal{A}_1 * \mathcal{A}_2) \otimes M_p(\bC)$.
We aim to compare this time the operator norm on the coefficient algebra. 
$$\left\| \bE \left[ (\tr \otimes \id_p) (A_{1}'\tilde B_{1}\ldots A_{m}'\tilde B_{m}) \right] - (\tau \otimes \id_p) (A_{1}'\hat B_{1}\ldots A_{m}'\hat B_{m}) \right\|$$
Note that the quantity whose operator norm is evaluated is a matrix in $M_p(\bC)$, and $\bE$ is with respect to the randomness of $U$. The case $p=1$ corresponds to the usual asymptotic freeness, and for a meaningful extension of this result, we need an operator norm estimate. 

Equivalently, we are interested in how close the spectrum of $(\tr \otimes \id_p) A_{1}'\tilde B_{1}\ldots A_{m}'\tilde B_{m}$ and $(\tau \otimes \id_p) A_{1}'\hat B_{1}\ldots A_{m}'\hat B_{m}$ are.

\subsection{Counterexamples for $n=p$}
The situation becomes non-trivial when $p$ grows along with $n$: for fixed $p$, bounds can be derived from the $p=1$ case. Another interest of this setup is the entanglement phenomenon: the matrices $A_i'$ and $B_i'$ can be entangled, and this can lead to a non-trivial behavior of the spectrum of the evaluated quantity.
Actually, for this very reason of entanglement, when $n=p$ these two quantities cannot be uniformly close.

We write $A_{2i-1} = A_i'$ and $A_{2i} = B_i'$. To prove this result, we observe, as a direct extension of [CS06],
\begin{align*}
    & \bE \left[ (\tr \otimes \id_n) \left( A_1' (U \otimes I_n) B_1' (U^* \otimes I_n) \ldots A_m' (U \otimes I_n) B_m' (U^* \otimes I_n) \right) \right] \\
    =& n^{-1} \sum_{\sigma,\tau,\rho\in S_m: \sigma\tau\rho=(1,2,\ldots ,m)} (\Tr_{\phi} \otimes \Tr_\gamma) \left( A_1, \ldots, A_{2m} \right) Wg(\rho,N),
\end{align*}
where $\phi \in S_{2m}$ sends $2i$ to $2\sigma (i)$, and $2i+1$ to $2\tau (i)+1$, and where $\gamma$ is a partial permutation with domain $[2m-1]$ sending $i$ to $i+1$. The Weingarten function $Wg(\rho,N)$ is of order $O(n^{-m-|\rho|}) = O(n^{-2m+\#\rho})$, where $|\rho|$ is the length and $\#\rho$ is the number of cycles of the permutation $\rho$.

We consider the terms of $\sigma = (m,\ldots,1)$ and $\tau = (1)(2)\ldots(m)$, then the corresponding permutation becomes $\phi = (2m,2m-2,\ldots,2)(1)(3)\ldots (2m-1)$. By some simple computation, we can see that $R(\phi) = 2m-1$ and $\rho = (1,\ldots,m)^2$. Thus, $\#cycles(\rho) = 2$ if $m$ is an even number. Moreover, $R(\gamma) = 0$. In this setting, by Corollary \ref{Coro-operator norm} with $k=2$, we have
\begin{align} \label{ineq-norm-example}
    \max_{\|A_1\|,\ldots,\|A_{2m}\| \le 1} \left\| (\Tr_{\phi} \otimes \Tr_\gamma) \left( A_1, \ldots, A_{2m} \right) \right\|
    = n^{M(\phi,\gamma)}
    \le n^{R(\phi) + R(\gamma)}
    = n^{R(\phi)}.
\end{align}
We would like to mention that the inequality in \eqref{ineq-norm-example} becomes an equality if we choose the blue directed edges as follows. For all odd numbers $i$, we connect the in-vertices with the out-vertices of the same color in rectangle $A_i$. For all even numbers $i$, we connect the in-vertex with the out vertex of different color in rectangle $A_i$.
Thus, for the $\sigma,\tau$ above, we have
\begin{align*}
    & \max_{\|A_1\|,\ldots,\|A_{2m}\| \le 1} \left\| n^{-1} (\Tr_{\phi} \otimes \Tr_\gamma) \left( A_1, \ldots, A_{2m} \right) Wg(\rho,N) \right\| \\
    =& n^{-1} \max_{\|A_1\|,\ldots,\|A_{2m}\| \le 1} \left\| (\Tr_{\phi} \otimes \Tr_\gamma) \left( A_1, \ldots, A_{2m} \right) \right\| Wg(\rho,N) \\
    =& n^{-1} n^{R(\phi)} O(n^{-2m+\#\rho})
    = O(1).
\end{align*}

\subsection{The Ginibre case}

In view of the above, it looks plausible that the error in operator norm is of order $p/n$ when $p$ grows with $n$.
However, proving this seems to require heavy combinatorics beyond the scope of this paper, so we focus on the case where unitary rotations are replaced by Ginibre ensembles. This is case is a particular case of the above, with the merits of being non-trivial, exhibiting the conjectured behavior, and of being combinatorially tractable, due to the fact that integration of Ginibre matrices (with Wick calculus) is lighter than the integration of unitary matrices (with Weingarten functions). 

Let us now state the result that we can prove in the Ginibre case.
Let $X = X^{(n)}$ be a $n \times n$ Ginibre matrix, that is a matrix with i.i.d. complex Gaussian entries of mean $0$ and variance $1/n$.
We call $\check{B}_i$ the random matrices $(X \otimes I_p)\cdot B_i' \cdot (X^* \otimes I_p)$. It is in the matrix space $M_n(\bC) \otimes M_p(\bC)$. Introduce a circular element $c$ that is free from $M_n(\bC)$, and call $\bar{B}_i=(c\otimes I_p) B_i' (c^* \otimes I_p)$ in $(M_n(\bC)* \bC \langle c\rangle) \otimes M_p(\bC)$. The following theorem is our main result of this section.

\begin{theorem} \label{Thm-Ginibre-compare}
If $n=N^{d_1},p=N^{d_2}$ for integers $N,d_1,d_2$, such that $d_1>d_2$, then there exists a constant $C=C(m)$ such that for any $A_i,B_i$ of norm less than $1$, we have
\begin{align*}
    \left\| \bE \left[ (\tr \otimes \id_p) (A_{1}'\check{B}_{1}\ldots A_{m}'\check{B}_{m}) -(\tau \otimes \id_p) (A_{1}'\bar{B}_{1}\ldots A_{m}'\bar{B}_{m}) \right] \right\|
    \le C(m) N^{-d_1+d_2}.
\end{align*}
\end{theorem}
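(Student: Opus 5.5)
Our plan is to expand both sides by Wick calculus and to bound each surviving term with Corollary \ref{Coro-operator norm}. Write $A_{2i-1}=A_i'$ and $A_{2i}=B_i'$, viewed as elements of $M_N(\bC)^{\otimes d_1}\otimes M_N(\bC)^{\otimes d_2}$.

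\textbf{Step 1 (Wick expansion).} The word $A_1'\check B_1\cdots A_m'\check B_m$ contains $m$ factors $X\otimes I_p$ and $m$ factors $X^*\otimes I_p$. By the Wick formula, $\bE$ of it is a sum over pairings $\pi\in S_m$ that match each $X$ with some $X^*$, each pairing carrying the weight $n^{-m}$ from the variances. Contracting the $n$-indices as in the unitary computation above gives
\begin{align*}
\bE\left[(\tr\otimes\id_p)(\cdots)\right]=\sum_{\pi\in S_m} n^{-1-m}\,(\Tr_{\phi_\pi}\otimes\Tr_\gamma)(A_1,\ldots,A_{2m}).
\end{align*}
Here $\phi_\pi\in S_{2m}$ is the permutation of the first leg determined by $\pi$, as in the preceding subsection: the even indices are joined through $\pi$ and the odd indices through $\gamma_m\pi^{-1}$ or a similar permutation. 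The second leg carries the partial permutation $\gamma:i\mapsto i+1$ on $[2m-1]$, which leaves one open in-vertex and one open out-vertex, so the result lies in $M_p(\bC)$.

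The free side $(\tau\otimes\id_p)(A_1'\bar B_1\cdots)$ has the same expansion restricted to the non-crossing pairings $\pi$ (those with $\#(\pi)+\#(\pi^{-1}\gamma_m)=m+1$). It reproduces the same term-by-term expressions in $A_i,B_i$, because the $*$-free cumulants of $c$ are only the covariance pairings and freeness with amalgamation over $M_p$ behaves well for such tensor coefficients. We must verify this identity carefully: it should follow from the operator-valued moment-cumulant formula with $c\otimes I_p$ being $M_p$-valued circular with covariance $\tau\otimes\id$. The difference is therefore the sum over crossing $\pi$ only.

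\textbf{Step 2 (norm bound per term).} For each $\pi$, Corollary \ref{Coro-main-multi leg} and Corollary \ref{Coro-operator norm} with leg dimensions $(d_1,d_2)$ give
\begin{align*}
\max_{\|A_i\|\le1}\|(\Tr_{\phi_\pi}\otimes\Tr_\gamma)(A)\| = N^{M(\phi_\pi,\gamma;d_1,d_2)}.
\end{align*}
The term is therefore bounded by $N^{M-d_1(m+1)}$, and it remains to show that $M(\phi_\pi,\gamma;d_1,d_2)\le d_1(m+1)+d_2-(d_1-d_2)$ for crossing $\pi$, while the count for non-crossing $\pi$ is exactly $d_1(m+1)$ (consistent with an $O(1)$ free term).

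\textbf{Step 3 (the combinatorial count; main obstacle).} We bound the number of directed cycles for an arbitrary blue wiring. Cycles that use only first-leg colours number at most $d_1\#(\text{cycles of }\phi_\pi)$. Mixing colours between legs is possible only through rectangles whose blue edges cross legs, and every such crossing costs cycles. We plan to project the coloured graph onto a genus-type argument. Assign to each cycle the set of colours it uses. A cycle that uses a $d_2$-colour must pass through the open path of $\gamma$, or else must use the backward edges of $\gamma$, of which there are none besides the open ones. Because $R(\gamma)=0$, the second-leg colours are acyclic, so every cycle contains at least one $\phi_\pi$ edge. Hence $M\le d_1\cdot\#\phi_\pi$ plus a correction that we attempt to bound by $d_2$ times the number of exchanges. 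For a non-crossing $\pi$, $\#\phi_\pi=m+1$. For a crossing $\pi$, $\#\phi_\pi\le m-1$, and the best mixed wiring recovers at most $d_2$ per lost cycle, which gives the deficit $2(d_1-d_2)\ge d_1-d_2$. Making this exchange inequality rigorous is the hard part. We would try induction on the rectangles, swapping blue edges as in Lemma \ref{Lem-multi-upper-lower}: each swap changes the cycle count by at most one per colour pair. Summing over the finitely many $\pi$ then gives $C(m)N^{-d_1+d_2}$.
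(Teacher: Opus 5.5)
Your Steps 1 and 2 follow the paper's route: the Wick expansion, the free side equal to the same sum restricted to non-crossing pairings, and the per-term bound $N^{M-d_1(m+1)}$ from Corollary~\ref{Coro-operator norm}. The free-side identity does check out by multilinearity: take $A_i=a_i\otimes e_i$; the $M_p$ leg then only contributes the ordered product, and the scalar circular moment formula does the rest.

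The whole content of the theorem, however, is the crossing estimate of your Step~3, which is the paper's Theorem~\ref{Thm-Ginibre-2}, and you do not prove it. Note first that the target you state, $M\le d_1(m+1)+d_2-(d_1-d_2)=d_1m+2d_2$, would only give $N^{-d_1+2d_2}$. What is needed is $M\le d_1(m+1)-(d_1-d_2)=d_1m+d_2$.

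More seriously, the sketch has no working mechanism:
\begin{itemize}
\item $d_1\#\phi_\pi$ bounds only the cycles that stay in first-leg colours, and nothing bounds the cycles that use second-leg edges.
\item ``Recovers at most $d_2$ per lost cycle'' is an assertion, not an argument.
\item ``At most one cycle per swap'' gives nothing without a bound on the number of swaps in terms of $d_2$.
\item The ``hence'' after ``every cycle contains a $\phi_\pi$ edge'' does not follow: containing such an edge says nothing about $\#\phi_\pi$.
\end{itemize}

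The missing idea is to count \emph{backward} edges instead of cycles of $\phi_\pi$.
\begin{itemize}
\item Since $\gamma(i)=i+1$, all second-leg edges are forward, so every directed cycle contains a backward first-leg edge.
\item For \emph{every} pairing, the $A'$-edges $i\to\theta(i)+1$ contribute $1+\#\{i:\theta(i)<i\}$ backward edges and the $B'$-edges $\theta(i)\to i$ contribute $\#\{i:\theta(i)\ge i\}$. Hence $R(\phi_\pi)=m+1$ and $M\le d_1(m+1)$, crossing or not.
\end{itemize}

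The crossing deficit then comes from a local obstruction at one rectangle.
\begin{itemize}
\item Take two crossing pairs $(2i_1-1,2\theta(i_1))$ and $(2i_2-1,2\theta(i_2))$ with $2i_1-1<2i_2-1<2\theta(i_1)<2\theta(i_2)$ and minimal overlap.
\item Strip the adjacent pairs inside the overlap. Each removal lowers $M$ by exactly $d_1$ and keeps the pairing crossing, so you may assume $\theta(i_1)=i_2$.
\item Then the rectangle $B'_{i_2}$ receives its $d_1$ first-leg edges from $B'_{\theta(i_2)}$ and sends them to $B'_{i_1}$. All of these edges are backward and none is a loop.
\item This rectangle has only $d_2$ second-leg out-vertices. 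So at least $d_1-d_2$ of its blue edges join a backward in-edge directly to a backward out-edge, each placing two backward edges in the same cycle.
\end{itemize}
Consequently the $d_1(m+1)$ backward edges can close at most $d_1(m+1)-(d_1-d_2)=d_1m+d_2$ cycles. Summing over the $C(m)$ crossing pairings with the prefactor $N^{-d_1(m+1)}$ gives $C(m)N^{-d_1+d_2}$. One such rectangle suffices; the deficit $2(d_1-d_2)$ you aim for is more than the statement requires.
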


To prove Theorem \ref{Thm-Ginibre-compare}, we start with the product of the sequence of matrices 
$$A_1' (X \otimes I_{N^{d_2}}) B_1'(X^* \otimes I_{N^{d_2}}) \ldots A_m' (X \otimes I_{N^{d_2}}) B_m' (X^* \otimes I_{N^{d_2}}).$$

We pair the matrices $X$ with $X^*$ by a pairing $\theta$, where $\theta$ is a one to one mapping from $[m]$ to $[m]$. For any $1 \le i \le m$, if the $i$-th $X$ is paired with the $\theta(i)$-th $X^*$, then we construct the permutations $\sigma', \sigma'' \in \cP([m])$ by
\begin{align*}
    \sigma'(i) = \theta(i)+1 \ (\mathrm{mod} \ m),
    \quad \sigma''(\theta(i)) = i.
\end{align*}
Then the permutation $\sigma'$ is on $A_1',\ldots,A_m'$ and the permutation $\sigma''$ is on $B_1',\ldots,B_m'$.

We write $A_{2i-1} = A_i'$ and $A_{2i} = B_i$. We also set $\sigma \in \cP([2m])$ be
\begin{align*}
    \sigma(2i-1) = 2 \sigma'(i)-1, \sigma(2i) = 2 \sigma''(i).
\end{align*}
To explain our result, we need to classify the pairing $\theta$. First of all, $\theta$ can be understood as a pairing of odd numbers with even numbers in $[2m]$. More precisely, we define $\tilde \theta$ as a pairing on $[2m]$ by $\tilde \theta = \prod_{i \in [m]} (2i-1,2\theta(i))$. Then we call the pairing $\theta$ is \emph{non-crossing} if the corresponding pairing $\tilde \theta$ on $[2m]$ is non-crossing, and  $\theta$ is \emph{crossing} if $\tilde \theta$ is crossing.
We choose $\tau$ to be the partial permutation given by $\tau (i) = i+1$ for $i \in D(\tau) = [2m-1]$.

On one hand, by Wick calculus, we have
\begin{align} \label{eq-Weingarten-1}
    & \bE \left[ (\tr \otimes \id_{N^{d_2}}) \left( A_1' (X \otimes I_{N^{d_2}}) B_1'(X^* \otimes I_{N^{d_2}}) \ldots A_m' (X \otimes I_{N^{d_2}}) B_m' (X^* \otimes I_{N^{d_2}}) \right) \right] \nonumber \\
    =& N^{-d_1(1+m)} \sum_\sigma \left( \Tr_{\sigma}^{\otimes d_1} \otimes \Tr_{\tau}^{\otimes d_2} \right) (A_1,\ldots,A_{2m}),
\end{align}
where $\sum_\sigma$ sums over all pairings $\theta$ of $X$ with $X^*$.
On the other hand, for the free version, we have
\begin{align} \label{eq-Weingarten-2}
    & (\tau \otimes \id_{N^{d_2}}) \left( A_1' (c \otimes I_{N^{d_2}}) B_1'(c^* \otimes I_{N^{d_2}}) \ldots A_m' (c \otimes I_{N^{d_2}}) B_m' (c^* \otimes I_{N^{d_2}}) \right) \nonumber \\
    =& N^{-d_1(1+m)} \sum_{\sigma:\theta \ \mathrm{non-crossing}} \left( \Tr_{\sigma}^{\otimes d_1} \otimes \Tr_{\tau}^{\otimes d_2} \right) (A_1,\ldots,A_{2m}).
\end{align}
Combining \eqref{eq-Weingarten-1} and \eqref{eq-Weingarten-2}, we obtain that
\begin{align} \label{eq-Weingarten}
    & \bE \left[ (\tr \otimes \id_{N^{d_2}}) \left( A_1' (X \otimes I_{N^{d_2}}) B_1'(X^* \otimes I_{N^{d_2}}) \ldots A_m' (X \otimes I_{N^{d_2}}) B_m' (X^* \otimes I_{N^{d_2}}) \right) \right] \nonumber \\
    & - (\tau \otimes \id_{N^{d_2}}) \left( A_1' (c \otimes I_{N^{d_2}}) B_1'(c^* \otimes I_{N^{d_2}}) \ldots A_m' (c \otimes I_{N^{d_2}}) B_m' (c^* \otimes I_{N^{d_2}}) \right) \nonumber \\
    =& N^{-d_1(1+m)} \sum_{\sigma:\theta \ \mathrm{crossing}} \left( \Tr_{\sigma}^{\otimes d_1} \otimes \Tr_{\tau}^{\otimes d_2} \right) (A_1,\ldots,A_{2m}).
\end{align}

Next, we consider the partial trace
\begin{align*}
    \left( \Tr_{\sigma}^{\otimes d_1} \otimes \Tr_{\tau}^{\otimes d_2} \right) (A_1,\ldots,A_{2m}).
\end{align*}
For the permutation $\sigma$ and the partial permutation $\tau$ given above,
we set $\sigma_1 = \ldots = \sigma_{d_1} = \sigma$ and $\sigma_{d_1+1} = \ldots = \sigma_{d_1+d_2} = \tau$.
We consider the graph $G_{\sigma_1,\ldots,\sigma_{d_1+d_2}} (A_1,\ldots,A_{2m})$ given in Section \ref{sec:graph-partial}.

The following is our first estimate, where $\theta$ is non-crossing.
Strictly speaking, this estimate is not needed as we evaluate the difference of two conditional expectations, which is a sum over crossing permutations. However, we include it as an algebraic confirmation that the conditional expectations one is considering are bounded. 

\begin{theorem} \label{Thm-Ginibre-1}
For $\sigma,\tau$ given above, and $d_1, d_2 \in \bN$, if $\theta$ is non-crossing, then we have
\begin{align*}
    \max_{\|A_1\|,\ldots,\|A_{2m}\| \le 1} \left\| \left( \Tr_{\sigma}^{\otimes d_1} \otimes \Tr_{\tau}^{\otimes d_2} \right) (A_1,\ldots,A_{2m}) \right\|
    = N^{d_1(1+m)}.
\end{align*}
\end{theorem}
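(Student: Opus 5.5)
By Corollary \ref{Coro-operator norm} applied with $k=d_1+d_2$ legs, $\sigma_1=\dots=\sigma_{d_1}=\sigma$ and $\sigma_{d_1+1}=\dots=\sigma_{d_1+d_2}=\tau$, the left-hand side equals $N^{M}$ with $M=M(\sigma,\dots,\sigma,\tau,\dots,\tau)$. It therefore suffices to show that, when $\theta$ is non-crossing, the maximal number of directed cycles over all blue connections is exactly $d_1(1+m)$. I will prove matching upper and lower bounds on the cycle count.

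\emph{Upper bound.} Every directed cycle must avoid the $\tau$-colored edges. Indeed, $\tau$ is the forward path $1\to2\to\dots\to 2m$ with no backward edge, so each $\tau$-edge lies on a path ending at an open out-vertex. Hence every cycle uses only edges of the $d_1$ colors carrying $\sigma$ together with blue edges. Consider the subgraph on these $d_1$ colors, restricted to the in/out vertices used by cycles. Each cycle consumes at least one $\sigma$-colored edge. A crude bound is therefore $2m d_1$ cycles, which is too weak. I will sharpen it as follows.

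Let $c_A$ be the number of cycles of $\sigma'$ and $c_B$ the number of cycles of $\sigma''$. For a non-crossing $\theta$, a Kreweras-type count gives $c_A+c_B=m+1$: $\sigma''=\theta^{-1}$ is essentially the pairing, $\sigma'=\gamma_m\theta$, and non-crossing means $\#(\sigma')+\#(\sigma'')=m+1$ in the appropriate genus-zero sense. The key claim is that for a single $\sigma$ the number of cycles is at most $\#\text{cycles}(\sigma)=m+1$ per $\sigma$-color. Equivalently, I will show that the cycles of the blue-plus-$\sigma$ graph on $d_1$ colors number at most $d_1\cdot\#(\sigma)$. For this I plan to reuse the upper-bound machinery of Theorem \ref{Thm-upper}. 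Take the full simple partial graph obtained by deleting, for each color $j\le d_1$, one edge from each cycle of $\sigma$, and delete all $\tau$ edges that close nothing. Then verify that no blue choice creates a cycle.

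The main obstacle is exactly this verification. With several $\sigma$-colors, a blue edge can switch colors and form a cycle mixing colors that passes through the deleted positions. My first attempt is to choose the deleted edges coherently: for each cycle of $\sigma$, delete the edge entering its minimal element in \emph{every} color. Each color then becomes a forest of forward paths, and any blue-switched walk only moves forward, which would prevent cycles. Here I use the backward-edge argument of Corollary \ref{Coro-2 leg-backward}: with a relabeling by some $\theta'$ as in Remark \ref{Rmk-backward}, each cycle of $\sigma$ has exactly one backward edge. Getting $R(\theta'\sigma\theta'^{-1})=\#(\sigma)$ requires a labeling in which every cycle of $\sigma$ is increasing except its closing edge. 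Because $\tau$ plays no role in cycles, I am free to relabel without disturbing the count. Thus $M\le d_1(R(\sigma)+0)=d_1\#(\sigma)$, using Corollary \ref{Coro-2 leg-backward}'s multi-leg analogue together with invariance under conjugation.

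\emph{Lower bound.} Connect all blue edges color-preservingly, i.e.\ $A_i=I$. In each $\sigma$-color this yields exactly $\#(\sigma)$ cycles, while the $\tau$-colors give only open paths. The total is $d_1\#(\sigma)$. It then remains to check the combinatorial identity $\#(\sigma)=\#(\sigma')+\#(\sigma'')=m+1$ for non-crossing $\theta$, which follows from the standard genus formula for the pairing $\tilde\theta$.
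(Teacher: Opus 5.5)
Your lower bound is correct. The color-preserving connection ($A_i=I$) gives $Y=N^{d_1\#(\sigma)}I_{N^{d_2}}$, and $\#(\gamma_m\theta)+\#(\theta^{-1})=m+1$ for non-crossing $\theta$. This is in fact the same connection the paper builds by induction on $m$, so citing the genus identity is a legitimate shortcut, once you check that non-crossing of $\tilde\theta$ is exactly the geodesic condition.

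The gap is in the upper bound. Your claim that no directed cycle uses a $\tau$-colored edge is false. A blue edge may join a $\sigma$-colored in-vertex to a $\tau$-colored out-vertex and vice versa, so a cycle can go backward along a $\sigma$-edge and return forward along $\tau$-edges. The paper's $n=p$ example shows this matters. Take $\phi=(2m,2m-2,\ldots,2)(1)(3)\cdots(2m-1)$ and the forward shift $\gamma$, and swap colors in the even rectangles while keeping them in the odd ones. This produces the cycles $A_{2l}\to A_{2l-2}\to A_{2l-1}\to A_{2l}$ for $2\le l\le m$, which use $\gamma$-edges. Hence $M(\phi,\gamma)\ge 2m-1>\#(\phi)=m+1$ for $m\ge 3$.

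So the bound $M\le d_1\#(\sigma)$ that your argument would deliver for arbitrary $\sigma$ is false in general: the $\tau$-legs genuinely contribute. In particular, relabeling is not free. Conjugating by $\theta'$ replaces $\tau$ by $\theta'\tau(\theta')^{-1}$, which has a backward edge unless $\theta'$ is the identity. The backward-edge bound then becomes $d_1R(\theta'\sigma(\theta')^{-1})+d_2R(\theta'\tau(\theta')^{-1})$, not $d_1\#(\sigma)$.

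What is missing is the count in the original labeling, which is the only labeling in which $\tau$ has no backward edges. This is the paper's Step 1. First, $R(\sigma)=R(\sigma')+R(\sigma'')$. Next, $R(\sigma')=1+\#\{i:\theta(i)<i\}$, where the extra $1$ comes from $i=\theta^{-1}(m)$, for which $\sigma'(i)=1$. Also $R(\sigma'')=R(\theta^{-1})=\#\{i:i\le\theta(i)\}$. Hence $R(\sigma)=m+1$ for every $\theta$. Every directed cycle must contain a backward edge, and all backward edges are $\sigma$-colored, so $M\le d_1(m+1)$ directly.

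Equivalently, combining $R(\sigma)=m+1$ with $\#(\sigma)=m+1$ shows that each cycle of $\sigma$ has exactly one descent in the original labeling. Your ``delete the edge entering the minimal element'' partial graph is then all-forward and hence simple, with no relabeling needed. With this step supplied, your argument closes.
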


\begin{proof}
By Corollary \ref{Coro-operator norm}, we have
\begin{align*}
    \max_{\|A_1\|,\ldots,\|A_{2m}\| \le 1} \left\| \left( \Tr_{\sigma}^{\otimes d_1} \otimes \Tr_{\tau}^{\otimes d_2} \right) (A_1,\ldots,A_{2m}) \right\|
    = N^{M(\sigma_1,\ldots,\sigma_{d_1+d_2})}.
\end{align*}

In the following, we divide the computation of $M(\sigma_1,\ldots,\sigma_{d_1+d_2})$ in two steps.

\bigskip
\noindent{\bf Step 1.}
In this step, we show that
\begin{align*}
    M(\sigma_1,\ldots,\sigma_{d_1+d_2}) \le d_1(1+m).
\end{align*}

Note that for any possibility of connection of blue directed edges in the rectangles in the graph $G_{\sigma_1,\ldots,\sigma_{d_1+d_2}} (A_1,\ldots,A_{2m})$, every directed cycle must contain at least one backward edge.
Thus, it is enough to show that $R(\sigma) \le 1+m$, since the number of backward edges in the graph $G_{\sigma_1,\ldots,\sigma_{d_1+d_2}} (A_1,\ldots,A_{2m})$ is $d_1 R(\sigma)$.

For the permutation $\sigma', \sigma'' \in \cP([2m])$, we count $R(\sigma')$ and $R(\sigma'')$.
We start with $R(\sigma')$. For $i \in [m]$ with $\theta(i) \not= m$, we have
\begin{align*}
    \sigma'(i) =
    \begin{cases}
        \theta(i)+1>i, & i \le \theta(i), \\
        \theta(i)+1 \le i, & i\ge \theta(i)+1.
    \end{cases}
\end{align*}
Besides, for $i = \theta^{-1}(m)$, we have $\sigma'(i) = 1 \le i$. Thus, we have
\begin{align*}
    R(\sigma') = 1 + \# \{i: i \ge \theta(i)+1 \}.
\end{align*}
Next, we handle $R(\sigma'')$. By definition, 
Hence, we have
\begin{align*}
    R(\sigma'') = \# \{i: i \le \theta(i) \}.
\end{align*}
Therefore, by the construction, we obtain
\begin{align} \label{eq-R(sigma)}
    R(\sigma) = R(\sigma') + R(\sigma'') = 1+m.
\end{align}

\bigskip
\noindent{\bf Step 2.}
In this step, we show that $M(\sigma_1,\ldots,\sigma_{d_1+d_2}) = d_1(1+m)$ if the pairing $\theta$ is non-crossing.
Note that with arbitrary connection of blue directed edges, any directed cycle in the graph $G_{\sigma_1,\ldots,\sigma_{d_1+d_2}} (A_1,\ldots,A_{2m})$ must contain at least one backward edge.
Thus, it is enough to find a way to connect the blue directed edges, such that any directed cycle in graph $G_{\sigma_1,\ldots,\sigma_{d_1+d_2}} (A_1,\ldots,A_{2m})$ contains exactly one backward edge.

Our method is to do the induction on $m$ and we will proceed in three steps.

{\bf Step 2.1.} We initiate the induction by considering the case $m=1$.

For the case $m=1$, the only possibility for $\theta$ is the identity on $\{1\}$, so are $\sigma'$ and $\sigma''$. Hence, $\sigma = (1)(2) \in \cP([2])$. In particular, in the graph $G_{\sigma_1,\ldots,\sigma_{d_1+d_2}} (A_1,A_2)$, the directed edges of color $col_j$ are loops for all $1 \le j \le d_1$.
We provide an example for the graph $G_{\sigma_1,\ldots,\sigma_{d_1+d_2}} (A_1,A_2)$ in Figure \ref{graph-Ginibre-m=1-1}.
We choose the blue directed edges in rectangles $A_1$ and $A_2$ in the following way. For $1 \le i \le 2$ and $1 \le j \le d_1+d_2$, we connect the in-vertex of color $col_j$ with the out-vertex of the same color in rectangle $A_i$. See an example of the blue directed edges for the graph $G_{\sigma_1,\ldots,\sigma_{d_1+d_2}} (A_1,A_2)$ in Figure \ref{graph-Ginibre-m=1-2}.
By our choice of blue directed edges, any loop in the graph $G_{\sigma_1,\ldots,\sigma_{d_1+d_2}} (A_1,A_2)$, together with the blue directed edge of the same color in the same rectangle form a directed cycle.
Thus, the number of directed cycles is $d_1 R(\sigma) = (1+m) d_1$, by \eqref{eq-R(sigma)}.

\begin{figure}[ht]
    \begin{subfigure}{0.49\textwidth}
        \centering
        \includegraphics[scale=0.5]{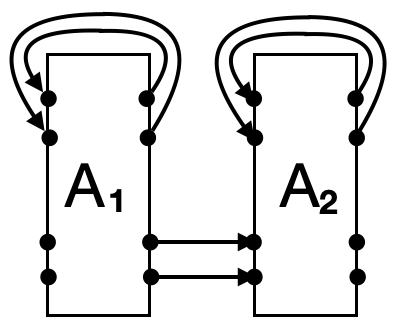}
        \caption{Graph $G_{\sigma_1,\ldots,\sigma_{d_1+d_2}} (A_1,A_2)$ with $d_1=d_2=2$}
        \label{graph-Ginibre-m=1-1}
    \end{subfigure}
    \hfill
    \begin{subfigure}{0.49\textwidth}
        \centering
        \includegraphics[scale=0.5]{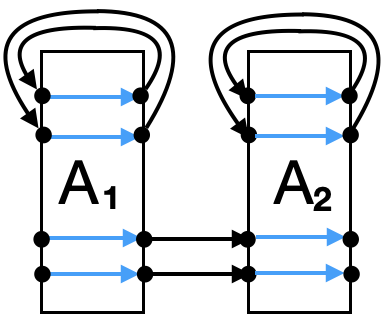}
        \caption{Blue directed edges in graph $G_{\sigma_1,\ldots,\sigma_{d_1+d_2}} (A_1,A_2)$ with $d_1=d_2=2$}
        \label{graph-Ginibre-m=1-2}
    \end{subfigure}
    \caption{The case $m=1$}
\end{figure}

{\bf Step 2.2.} Next, we pursue the induction by working on the general $m$.

We assume that for any non-crossing pairing $\theta$ from $[m-1]$ to $[m-1]$, and the corresponding $\sigma_1, \ldots, \sigma_{d_1+d_2}$, there exists a way to connect the blue directed edges, such that all directed cycles in the graph $G_{\sigma_1,\ldots,\sigma_{d_1+d_2}} (A_1,\ldots,A_{2(m-1)})$ contain exactly one backward edge.

We consider the case $m$. For any non-crossing pairing $\theta$ from $[m]$ to $[m]$, and the corresponding $\sigma_1, \ldots, \sigma_{d_1+d_2}$, we consider the graph $G_{\sigma_1,\ldots,\sigma_{d_1+d_2}} (A_1,\ldots,A_{m})$.

Since $\theta$ is non-crossing, it must pair an $X$ with the $X^*$ which is the neighborhood of the $X$.
Moreover, one can easily deduce that the such pair of neighbors of $X$ and $X^*$ can be chosen so that $X^*$ is not the last one.
Indeed, if the last $X^*$ is paired with the last $X$, then $\theta(m) = m$, and the restriction $\theta|_{[m-1]}$ on the first $2m-2$ matrices $X$ and $X^*$ is still a non-crossing pairing.
In other words, there exists $i' \in [m]$, such that $\theta(i') \in \{i'-1,i'\}$ and $\theta(i') \not= m$. Hence, there exists $i \in \{2,\ldots,2m-1\}$, such that $\sigma(i)=i$ and $\sigma(i-1) = i+1$.
In the corresponding graph $G_{\sigma_1,\ldots,\sigma_{d_1+d_2}} (A_1,\ldots,A_{2m})$, there are loops on rectangle $A_i$ of color $col_j$ for $1 \le j \le d_1$.
We provide an example in Figure \ref{graph-Ginibre-m-noncrossing-1}.

\begin{figure}[ht]
    \begin{subfigure}{0.49\textwidth}
        \centering
        \includegraphics[scale=0.5]{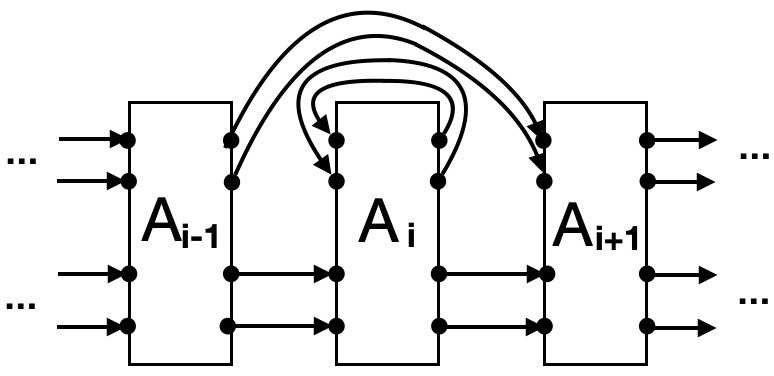}
        \caption{Graph $G_{\sigma_1,\ldots,\sigma_{d_1+d_2}} (A_1,\ldots,A_{2m})$ with $d_1=d_2=2$}
        \label{graph-Ginibre-m-noncrossing-1}
    \end{subfigure}
    \hfill
    \begin{subfigure}{0.49\textwidth}
        \centering
        \includegraphics[scale=0.5]{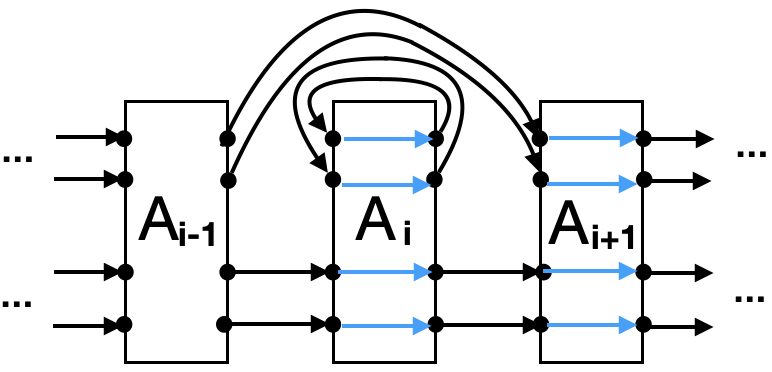}
        \caption{Blue directed edges in graph $G_{\sigma_1,\ldots,\sigma_{d_1+d_2}} (A_1,\ldots,A_{2m})$ with $d_1=d_2=2$}
        \label{graph-Ginibre-m-noncrossing-2}
    \end{subfigure}
    \caption{Graph $G_{\sigma_1,\ldots,\sigma_{d_1+d_2}} (A_1,\ldots,A_{2m})$ with $d_1=d_2=2$}
\end{figure}

We first focus on the case that $i$ is odd.
We consider the graph
\[
    G_{\tilde\sigma_1,\ldots,\tilde\sigma_{d_1+d_2}} (A_1,\ldots,A_{i-1},A_{i+2},\ldots,A_{2m}),
\]
where $\tilde\sigma_1,\ldots,\tilde\sigma_{d_1+d_2}$ is defined as follows.
For $1 \le j \le d_1+d_2$, we define partial permutations $\tilde\sigma_j$ on $D(\tilde\sigma_j) = D(\sigma_j) \setminus \{i,i+1\}$ given by
\begin{align} \label{eq-def-reduce-sigma}
    \tilde\sigma_j(l) =
    \begin{cases}
        \sigma_j(l), & l \not= i-1, \\
        \sigma_j(i+1), & l = i-1.
    \end{cases}
\end{align}
\begin{sloppypar}
It is easy to see that the graph $G_{\tilde\sigma_1,\ldots,\tilde\sigma_{d_1+d_2}} (A_1,\ldots,A_{i-1},A_{i+2},\ldots,A_{2m})$ is the graph associated to the pairing $\theta' = \theta|_{[m] \setminus \{(i+1)/2\}}$.
Hence, by induction hypothesis, there exists a connection of blue directed edges in the rectangles in the graph $G_{\tilde\sigma_1,\ldots,\tilde\sigma_{d_1+d_2}} (A_1,\ldots,A_{i-1},A_{i+2},\ldots,A_{2m})$, such that all directed cycles in the graph $G_{\tilde\sigma_1,\ldots,\tilde\sigma_{d_1+d_2}} (A_1,\ldots,A_{i-1},A_{i+2},\ldots,A_{2m})$ contain exactly one backward edge.
In the following, we fix down this connection of blue directed edges in the graph $G_{\tilde\sigma_1,\ldots,\tilde\sigma_{d_1+d_2}} (A_1,\ldots,A_{i-1},A_{i+2},\ldots,A_{2m})$.
\end{sloppypar}

In the graph $G_{\sigma_1,\ldots,\sigma_{d_1+d_2}} (A_1,\ldots,A_{2m})$, we connect the blue directed edges in rectangle $A_i$ and $A_{i+1}$ that are from the in-vertex to the out-vertex of the same color. We provide an example of the blue directed edges in Figure \ref{graph-Ginibre-m-noncrossing-2}.
For the remaining rectangles, the blue directed edges are connected in the same way as in the graph $G_{\tilde\sigma_1,\ldots,\tilde\sigma_{d_1+d_2}} (A_1,\ldots,A_{i-1},A_{i+2},\ldots,A_{2m})$.
We also fix the connection of blue directed edges in the graph $G_{\sigma_1,\ldots,\sigma_{d_1+d_2}} (A_1,\ldots,A_{2m})$.

Next, we reveal the relationship between the graph $G_{\sigma_1,\ldots,\sigma_{d_1+d_2}} (A_1,\ldots,A_{2m})$ and the graph $G_{\tilde\sigma_1,\ldots,\tilde\sigma_{d_1+d_2}} (A_1,\ldots,A_{i-1},A_{i+2},\ldots,A_{2m})$ by a reduction operation.
From the graph $G_{\sigma_1,\ldots,\sigma_{d_1+d_2}} (A_1,\ldots,A_{2m})$, we remove the rectangle $A_i$ and $A_{i+1}$ together with the associated edges.
For $1 \le j \le d_1+d_2$, we connect out-vertex of color $col_j$ on rectangle $A_{i-1}$ with the in-vertex of color $col_j$ on the successor of $A_{i+1}$ by a directed edge of color $col_j$. The orientation of this directed edge is from out-vertex to in-vertex.
Then the new graph we have obtained is exactly the graph $G_{\tilde\sigma_1,\ldots,\tilde\sigma_{d_1+d_2}} (A_1,\ldots,A_{i-1},A_{i+2},\ldots,A_{2m})$.

Note that during the reduction operation above, with our choice of blue directed edges, the number of directed cycles in the graph $G_{\sigma_1,\ldots,\sigma_{d_1+d_2}} (A_1,\ldots,A_{2m})$ that are removed is $d_1$. Each of such directed cycles consists of one loop on rectangle $A_i$ and one blue directed edges in rectangle $A_i$.
For the remaining directed cycles in the graph $G_{\sigma_1,\ldots,\sigma_{d_1+d_2}} (A_1,\ldots,A_{2m})$, during the reduction operation, the numbers backward edges do not change.
The proof is concluded by the induction hypothesis.

For the case that $i$ is even, the argument is almost the same. The only difference is that we should remove $A_{i-1}$ and $A_i$. We omit the details.
\end{proof}

Next, we turn to the following estimate of the partial trace for crossing $\theta$.

\begin{theorem} \label{Thm-Ginibre-2}
For $\sigma,\tau$ given above, and $d_1, d_2 \in \bN$, if $\theta$ is crossing, then we have
\begin{align*}
    \max_{\|A_1\|,\ldots,\|A_{2m}\| \le 1} \left\| \left( \Tr_{\sigma}^{\otimes d_1} \otimes \Tr_{\tau}^{\otimes d_2} \right) (A_1,\ldots,A_{2m}) \right\|
    \le N^{d_1m+\min\{d_1,d_2\}}.
\end{align*}
\end{theorem}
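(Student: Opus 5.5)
By Corollary \ref{Coro-operator norm}, the left-hand side equals $N^{M(\sigma_1,\ldots,\sigma_{d_1+d_2})}$. It therefore suffices to show that, whenever $\theta$ is crossing, every connection of blue directed edges in $G_{\sigma_1,\ldots,\sigma_{d_1+d_2}}(A_1,\ldots,A_{2m})$ produces at most $d_1 m+\min\{d_1,d_2\}$ directed cycles. Two counting bounds are natural, one for each term of the minimum, and the plan is to prove both.

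\textbf{Step 1: the bound $d_1m+d_2$.} The $\tau$-colored edges go forward only, since $\tau(i)=i+1$. Every cycle must still contain a backward edge, so by \eqref{eq-R(sigma)} there are at most $d_1(1+m)$ cycles, which is too weak. The refinement is to separate cycles that use only $\sigma$-colored edges from the rest. Consider the subgraph on the $d_1$ $\sigma$-colors and ignore the $\tau$-colored edges, which are open at $A_1$ and $A_{2m}$. I claim that the number of closed cycles using only $\sigma$-colors is at most $d_1 m$ when $\theta$ is crossing. The remaining cycles must pass through the $\tau$-colored structure. Each such cycle uses a $\tau$-path from $A_1$ to $A_{2m}$, so the number of these cycles is bounded by the number of $\tau$-chains, namely $d_2$. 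Each of the $d_1 R(\sigma)=d_1(m+1)$ backward edges lies in at most one cycle. The plan is to show that a crossing forces at least $d_1$ of these backward edges to be shared or unusable, which gives $d_1m+d_2$.

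\textbf{Step 2: the bound $d_1m+d_1$ with one cycle lost.} This covers the case $d_1\le d_2$. I would show that, for crossing $\theta$, the permutation $\sigma$ (one leg) satisfies $\#\text{cycles}(\sigma)\le m$; indeed, for genus reasons $\#(\sigma)=m+1$ exactly when $\tilde\theta$ is non-crossing. In the multi-leg graph, I would then argue leg by leg: the cycles can be grouped so that each $\sigma$-color contributes at most $m+1$, and at least one color loses a cycle overall. The cleaner route I would try first is the Cauchy--Schwarz splitting used for Theorem \ref{Thm-upper}. Exhibit a full simple partial graph $G'$ whose complement has at most $d_1m+\min\{d_1,d_2\}$ edges, by removing $m$ backward edges of each $\sigma$-color plus $\min\{d_1,d_2\}$ extra edges: either one additional $\sigma$-backward edge per $\sigma$-color, or one $\tau$-edge per $\tau$-color. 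Then verify acyclicity under all blue connections using Lemma \ref{Lem-multi-simple}. In the partial permutation setting the upper bound should be applied through Theorem \ref{Thm-main-matrix}, or equivalently through the moment graph $G^{(p)}$ together with Proposition \ref{Prop-moment-cycle}.

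\textbf{Main obstacle.} The hard step is choosing which $m$ backward edges per $\sigma$-color to delete so that, because of the crossing, no blue rewiring can close a cycle using the remaining backward edge together with forward edges. I would first locate a crossing pair $a<b<c<d$ in $\tilde\theta$. For this pair, the two corresponding backward edges of $\sigma$ are interleaved in such a way that any cycle containing one of them must also traverse the other's interval, and hence must contain two backward edges. This gives the saving of one per $\sigma$-color, yielding the $d_1m+d_1$ bound. The $d_1m+d_2$ bound follows by instead cutting the $d_2$ $\tau$-chains.
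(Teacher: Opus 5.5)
Your reduction to bounding $M(\sigma_1,\ldots,\sigma_{d_1+d_2})$ via Corollary \ref{Coro-operator norm} is the same as the paper's. For $d_1\le d_2$ the claim is just the trivial count $M\le d_1R(\sigma)=d_1(m+1)$, so your Step 2 needs no crossing argument at all. The real content is the case $d_2<d_1$, and there your arguments fail.

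\textbf{Step 1 fails.} Cycles that use $\tau$-edges need not run along a $\tau$-chain from $A_1$ to $A_{2m}$; the open $\tau$-vertices are endpoints of paths and never lie on cycles. Moreover, one $\tau$-color can serve many cycles along different segments. Take $m=3$ and $\theta(1)=2,\theta(2)=3,\theta(3)=1$. Then $\tilde\theta=\{(1,4),(3,6),(2,5)\}$ is crossing and $\sigma=(1\,5\,3)(2\,6\,4)$. With $d_1=d_2=1$, wire $A_1,A_3,A_5$ crosswise ($\sigma$-in to $\tau$-out, $\tau$-in to $\sigma$-out) and $A_2,A_4,A_6$ straight. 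Then $A_3\to A_1\to A_2\to A_3$ and $A_5\to A_3\to A_4\to A_5$ are two distinct cycles using $\tau$-edges, although $d_2=1$.

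\textbf{Your ``interleaving'' mechanism fails for the same reason.} At $A_4$ both $\sigma$-edges $A_6\to A_4$ and $A_4\to A_2$ are backward; they come from the crossing pairs $(3,6)$ and $(1,4)$. Yet wiring $A_4,A_6$ crosswise and $A_5$ straight gives the cycle $A_6\to A_4\to A_5\to A_6$, which has only one backward edge. A saving of one backward edge per $\sigma$-color would give $M\le d_1m$, and that is false. For this $\theta$ with $d_1=2$, $d_2=4$, all $8=d_1(m+1)$ backward edges can be placed in distinct cycles. The escape through $\tau$-colors is exactly what produces $\min\{d_1,d_2\}$, and it has to be quantified.

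\textbf{The missing idea.} The paper picks crossing pairs $(a,c),(b,d)$ of $\tilde\theta$, with $a<b<c<d$, whose crossing part is minimal.
\begin{itemize}
\item If $c=b+1$, the single rectangle $A_c$ has all its $\sigma$-in-edges and $\sigma$-out-edges backward and not loops. Each blue edge of $A_c$ from a $\sigma$-in-vertex to a $\sigma$-out-vertex puts two backward edges into one cycle. Only $d_2$ out-vertices of $A_c$ are $\tau$-colored, so at least $\max\{d_1-d_2,0\}$ such blue edges exist. Hence there are at most $d_1(m+1)-\max\{d_1-d_2,0\}$ cycles.
\item If $c>b+1$, minimality makes $\{b+1,\ldots,c-1\}$ paired within itself and non-crossing. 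The paper strips an adjacent pair, which accounts for exactly $d_1$ loop cycles, and inducts on $m$.
\end{itemize}

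\textbf{Your deletion route can be completed this way, without induction.} Since $\sigma(x)=\tilde\theta(x)+1$ (mod $2m$), in each $\sigma$-color keep only the backward edge $A_d\to A_{b+1}$. Delete the other $d_1m$ backward $\sigma$-edges together with the $d_2$ $\tau$-edges $A_c\to A_{c+1}$.
\begin{itemize}
\item By minimality, no $\sigma$-edge with tail in $[b+1,c]$ has its head beyond $c$.
\item So every forward path from $A_{b+1}$ to $A_d$ uses a deleted $\tau$-edge.
\item Hence every cycle, for every wiring, contains a deleted edge.
\item Cycles are edge-disjoint, so $M\le d_1m+d_2$.
\end{itemize}
This is purely combinatorial: no Cauchy--Schwarz or moment graph is needed. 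Note also that Lemma \ref{Lem-multi-simple} is only a necessary property of simple graphs and cannot certify that a graph is simple. The specific choice of kept edge and cut position is exactly what your proposal leaves open, and the justification you give in its place is incorrect.
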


\begin{proof}
Let $\sigma_1,\ldots,\sigma_{d_1+d_2}$ be defined as in Theorem \ref{Thm-Ginibre-1}, and we consider the corresponding graph $G_{\sigma_1,\ldots,\sigma_{d_1+d_2}} (A_1,\ldots,A_{2m})$.
We will show that $M(\sigma_1,\ldots,\sigma_{d_1+d_2}) \le d_1m+\min\{d_1,d_2\}$ if the pairing $\theta$ is crossing.

Since $\theta$ is crossing, then $\tilde \theta$ is also crossing. Hence, there exists $i_1 \not= i_2 \in [m]$, such that the pairs $(2i_1-1, 2\theta(i_1))$ and $(2i_2-1, 2\theta(i_2))$ are crossing.
For the two pairs, we call the intersection of the corresponding intervals the \emph{crossing part} of the two pairs. It is easy to see that the crossing part of two pairs is non-empty if and only if the two pairs are crossing.
We can choose $i_1,i_2$ such that the crossing part of the two pairs are minimal.
Without loss of generality, we assume that $2i_1-1 < 2i_2-1 < 2\theta(i_1) < 2\theta(i_2)$. Other cases can be handled in a similar way.
Then we have $i_2 \le \theta(i_1)$.
In the following, we discuss two cases on whether $i_2 = \theta(i_1)$ or not.

{\bf Case 1.} We handle the case that $i_2=\theta(i_1)$.

We write $i=2i_2$. As the $i_2$-th $X^*$ is paired with the $i_1$-th $X$, and $i_1<i_2$, we can see that in the graph $G_{\sigma_1,\ldots,\sigma_{d_1+d_2}} (A_1,\ldots,A_{2m})$, the out-edges of the rectangle $A_i$ of color $col_j$ are backward edges but not loops, for $1 \le j \le d_1$. 
Similarly, since the $i_2$-th $U$ is paired with the $\theta(i_2)$-th $U^*$, and $\theta(i_2) > \theta(i_1) \ge i_2$, we have that in the graph $G_{\sigma_1,\ldots,\sigma_{d_1+d_2}} (A_1,\ldots,A_{2m})$, the in-edges of the rectangle $A_i$ of color $col_j$ are backward edges but not loops, for $1 \le j \le d_1$.
We provide an example of the graph $G_{\sigma_1,\ldots,\sigma_{d_1+d_2}} (A_1,\ldots,A_{2m})$ in Figure \ref{figure-Ginibre-m-crossing}.

\begin{figure}[ht]
    \centering
    \includegraphics[width=0.5\linewidth]{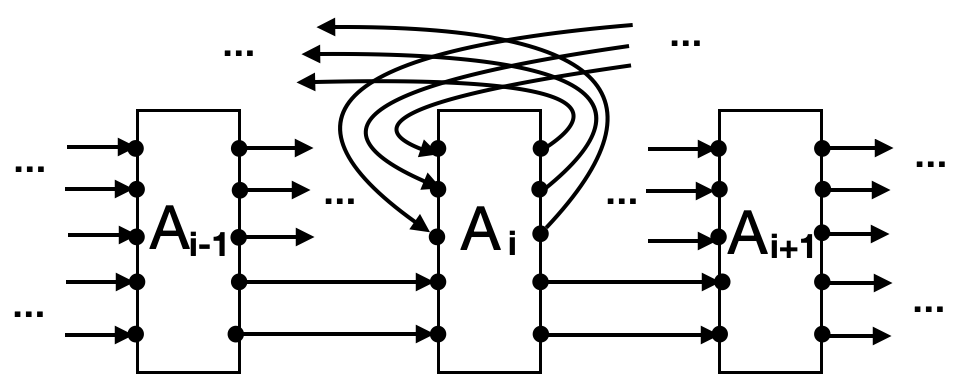}
    \caption{Graph $G_{\sigma_1,\ldots,\sigma_{d_1+d_2}} (A_1,\ldots,A_{2m})$ with $d_1=3,d_2=2$}
    \label{figure-Ginibre-m-crossing}
\end{figure}

\begin{sloppypar}
For any connection of blue directed edges in all rectangles in the graph $G_{\sigma_1,\ldots,\sigma_{d_1+d_2}} (A_1,\ldots,A_{2m})$, we count the number of directed cycles.
We fix the connection of blue directed edges, and consider the blue directed edges in rectangle $A_i$ first.
We set $a$ be the numbers of blue directed edges in rectangle $A_i$ that connect the in-vertex of color $col_{j_1}$ with the out-vertex of color $col_{j_2}$ for $1 \le j_1,j_2 \le d_1$.
Note that for $1 \le j_1,j_2 \le d_1$, if there is a blue directed edges in rectangle $A_i$ that connect the in-vertex of color $col_{j_1}$ with the out-vertex of color $col_{j_2}$, then the two backward edges, the in-edge of color $col_{j_1}$ and the out-edge of color $col_{j_2}$, belongs to the same directed cycle.
Hence, in the graph $G_{\sigma_1,\ldots,\sigma_{d_1+d_2}} (A_1,\ldots,A_{2m})$, there are at least $a$ many directed cycles that contain at least two backward edges.
As the other directed cycles should contain at least one backward edges, we can see that the number of directed cycles is at most $d_1R(\sigma) - a$.
\end{sloppypar}

We set $b$ be the numbers of blue directed edges in rectangle $A_i$ that connect the in-vertex of color $col_{j_1}$ with the out-vertex of color $col_{j_2}$ for $1 \le j_1 \le d_1 < j_2 \le d_1+d_2$.
Then we have $a+b = d_1$ and $b \le d_2$.
Thus, we have $a = d_1-b \ge d_1-d_2$ and $a \ge 0$.
Therefore, by \eqref{eq-R(sigma)}, we obtain that the number of directed cycles in the graph $G_{\sigma_1,\ldots,\sigma_{d_1+d_2}} (A_1,\ldots,A_{2m})$ is
\begin{align*}
    d_1R(\sigma) - a \le d_1(1+m) - \max\{d_1-d_2,0\} = d_1m + \min\{d_1,d_2\}.
\end{align*}
As the connection of blue directed edges we fixed is arbitrary, we obtain that
$M(\sigma_1,\ldots,\sigma_{d_1+d_2}) \le d_1m + \min\{d_1,d_2\}$.

{\bf Case 2.} We work on the case that $i_2<\theta(i_1)$.
Then the set $\{2i_2, \ldots, 2\theta(i_1)-1\}$ is non-empty.
As the crossing part of the two pairs $(2i_1-1, 2\theta(i_1))$ and $(2i_2-1, 2\theta(i_2))$ is minimal, it is easy to see that $\tilde \theta$ is a non-crossing pairing of even numbers with odd numbers among the set $\{2i_2, \ldots, 2\theta(i_1)-1\}$.
Hence, there exists $i \in \{2i_2+1,\ldots,2\theta(i_1)-1\}$, such that $\tilde \theta$ pairs $i-1$ with $i$.
Thus, we have $\sigma(i) = i$ and $\sigma(i-1)=i+1$.
In the corresponding graph $G_{\sigma_1,\ldots,\sigma_{d_1+d_2}} (A_1,\ldots,A_{2m})$, there is loop on rectangle $A_i$ of color $col_j$, and a directed edge of color $col_j$ from rectangle $A_{i-1}$ to rectangle $A_{i+1}$, for all $1 \le j \le d_1$.

Now we can reduce the graph as in Step 2.2 in the proof of Theorem \ref{Thm-Ginibre-1}. We sketch the argument below.
We only discuss the case that $i$ is odd, and the case that $i$ is even is almost the same.

First of all, we can construct partial permutations $\tilde \sigma_j$ by \eqref{eq-def-reduce-sigma} from $\sigma_j$, for $1 \le j \le d_1+d_2$.
Then the graph $G_{\tilde\sigma_1,\ldots,\tilde\sigma_{d_1+d_2}} (A_1,\ldots,A_{i-1},A_{i+2},\ldots,A_{2m})$ is the graph associated to the pairing $\theta' = \theta|_{[m]\setminus \{(i+1)/2\}}$ of $X$ and $X^*$.
Now for any connection of blue directed edges in all rectangles in the graph $G_{\tilde\sigma_1,\ldots,\tilde\sigma_{d_1+d_2}} (A_1,\ldots,A_{i-1},A_{i+2},\ldots,A_{2m})$, we connect the blue directed edges in the rectangles $A_1,\ldots,A_{i-1},A_{i+2},\ldots,A_{2m}$ in the graph $G_{\sigma_1,\ldots,\sigma_{d_1+d_2}} (A_1,\ldots,A_{2m})$ in the same way.
For the blue directed edges in rectangles $A_i$ and $A_{i+1}$, we connect the in-vertices with the out-vertices of the same color.

By comparing the graph $G_{\tilde\sigma_1,\ldots,\tilde\sigma_{d_1+d_2}} (A_1,\ldots,A_{i-1},A_{i+2},\ldots,A_{2m})$ with any connection of blue directed edges and the graph $G_{\sigma_1,\ldots,\sigma_{d_1+d_2}} (A_1,\ldots,A_{2m})$ with the corresponding connection of blue directed edges, we can see that the number of directed cycles in the graph $G_{\tilde\sigma_1,\ldots,\tilde\sigma_{d_1+d_2}} (A_1,\ldots,A_{i-1},A_{i+2},\ldots,A_{2m})$ is $d_1$ less than that of the graph $G_{\sigma_1,\ldots,\sigma_{d_1+d_2}} (A_1,\ldots,A_{2m})$.

The remaining argument is to do the induction on $m$, noting that $\theta'$ is still crossing but with a smaller crossing part, comparing with $\theta$.
\end{proof}

\begin{proof} (of Theorem \ref{Thm-Ginibre-compare}.)
Combining Theorem \ref{Thm-Ginibre-2} and \eqref{eq-Weingarten}, we have
\begin{align*}
    & \left\| \bE \left[ (\tr \otimes \id_{N^{d_2}}) \left( A_1' (X \otimes I_{N^{d_2}}) B_1'(X^* \otimes I_{N^{d_2}}) \ldots A_m' (X \otimes I_{N^{d_2}}) B_m' (X^* \otimes I_{N^{d_2}}) \right) \right] \right. \\
    & \left. \quad\quad \quad\quad \quad\quad
    - (\tau \otimes \id_{N^{d_2}}) \left( A_1' (c \otimes I_{N^{d_2}}) B_1'(c^* \otimes I_{N^{d_2}}) \ldots A_m' (c \otimes I_{N^{d_2}}) B_m' (c^* \otimes I_{N^{d_2}}) \right) \right\| \\
    =& N^{-d_1(1+m)} \left\| \sum_{\sigma:\theta \ \mathrm{crossing}} \left( \Tr_{\sigma}^{\otimes d_1} \otimes \Tr_{\tau}^{\otimes d_2} \right) (A_1,\ldots,A_{2m}) \right\| \\
    \le& N^{-d_1(1+m)} \sum_{\sigma:\theta \ \mathrm{crossing}} \left\| \left( \Tr_{\sigma}^{\otimes d_1} \otimes \Tr_{\tau}^{\otimes d_2} \right) (A_1,\ldots,A_{2m}) \right\| \\
    \le& C(m) N^{-d_1+\min\{d_1,d_2\}},
\end{align*}
where $C(m)$ is the total number of crossing pairing $\theta$.
\end{proof}

\bibliographystyle{plain} 
\bibliography{tensor_and_moment} 
\end{document}